\newcommand{\point}{\scriptscriptstyle{\bullet}}
\newcommand{\callo}[1]{{\cal L}(\Omega,#1_{\scriptscriptstyle{\bullet}})}
\newcommand{\lun}[1]{L^1(\Omega,#1_{\scriptscriptstyle{\bullet}})}
\newcommand{\callun}[1]{{\cal L}^1(\Omega,#1_{\scriptscriptstyle{\bullet}})}
\newcommand{\calSe}[1]{{\cal S}(\Omega,#1_{\scriptscriptstyle{\bullet}})}
\newcommand{\se}[1]{#1_{\scriptscriptstyle{\bullet}}}
\newcommand{\bd}{\partial}
\newcommand{\ch}[1]{(\Omega,#1_{\scriptscriptstyle{\bullet}})}
\newcommand{\epsi}{\varepsilon}
\newcommand{\ph}{\varphi}
\newcommand{\AQ}{\mathcal{A}_{\mathbf{Q}}}
\newcommand{\CXpoint}{\mathcal{C}(X_{\point})}
\newcommand{\om}[1]{#1_\omega}
\newcommand{\dist}[2]{\se d(\se #1,\se #2)}
\newcommand{\omdansom}[1]{\{\omega\in\Omega\mid#1\}}
\newcommand{\calloom}[2]{{\cal L}(#1,#2_{\scriptscriptstyle{\bullet}})}
\newcommand{\gam}[2]{\gamma_{#1_{\scriptscriptstyle{\bullet}},#2_{\scriptscriptstyle{\bullet}}}}
\newcommand{\gamom}[2]{\gamma_{#1_\omega,#2_\omega}}
\newcommand{\anglal}[3]{\angle_{#1_{\scriptscriptstyle{\bullet}}}
(#2_{\scriptscriptstyle{\bullet}},#3_{\scriptscriptstyle{\bullet}})}
\newcommand{\calloconto}{{\cal L}(\Omega,\mathcal{C}_0(X_{\scriptscriptstyle{\bullet}}))}
\newcommand{\tend}[1]{\rightarrow#1}
\newcommand{\omp}[1]{#1_{\omega'}}
\newcommand{\A}{{\cal A}}
\newcommand{\B}{{\cal B}}
\newcommand{\C}{{\cal C}}
\newcommand{\D}{{\cal D}}
\newcommand{\F}{{\cal F}}
\newcommand{\I}{{\cal I}}
\newcommand{\R}{\mathcal{R}}
\newcommand{\RR}{{\mathbf R}}
\newcommand{\QQ}{{\mathbf Q}}
\newcommand{\NN}{{\mathbf N}}
\newcommand{\nonvide}{\neq\emptyset}
\newcommand{\cat}{CAT(0)}
\DeclareMathOperator{\supess}{sup\,ess}
\DeclareMathOperator{\diam}{diam} \DeclareMathOperator{\rad}{rad}
\newcommand{\UU}{{\mathbf U}}
\newcommand{\ominfini}{\Omega_{\inf=-\infty}}
\newcommand{\ominf}{\Omega_{\inf>-\infty}}
\newcommand{\ommin}{\Omega_{\min}}
\newcommand{\alomomp}{\alpha(\omega,\omega')}
\newcommand{\alompom}{\alpha(\omega',\omega)}
\renewcommand{\I}{\mathcal{I}}
\newcommand{\M}{\mathscr{M}}
\renewcommand{\emptyset}{\text{\O}}
   \renewcommand*{\backref}[1]{}
   \renewcommand*{\backrefalt}[4]{
      \ifcase #1
         No cited.
      \or
         Cited on page #2.
      \else
         Cited on pages #2.
      \fi}
\definecolor{violet1}{rgb}{0.4,0,1}
\definecolor{violet2}{rgb}{0.95,0,0.95}
\definecolor{violet3}{rgb}{0.54,0.30,0.72}
\definecolor{klein}{rgb}{0.13,0,0.71}
\newtheorem{defi}{Definition}[section]
\newtheorem{lem}[defi]{Lemma}
\newtheorem{prop}[defi]{Proposition}
\newtheorem{thm}[defi]{Theorem}
\newtheorem*{thm*}{Theorem}
\theoremstyle{definition}
\newtheorem{rem}[defi]{Remark}
\newtheorem{exemp}[defi]{Example}
\newtheorem{exemps}[defi]{Examples}
\newtheorem{rems}[defi]{Remarks}
\title{\LARGE Actions of Amenable Equivalence Relations on CAT(0) Fields}
\date{}
\author{Martin \textsc{Anderegg} and Philippe P. A. \textsc{Henry}}
\begin{document}
\begin{sloppypar}
\maketitle

\begin{abstract}
\noindent After a short introduction to the general notion of Borel fields of metric spaces we introduce the notion of the action of an equivalence relation on such fields. Then, we specify the study to the Borel fields of proper CAT(0) spaces and we obtain a rigidity result for the action of an amenable equivalence relation on a Borel field of proper CAT(0) spaces. This main theorem is inspired by the result obtained by \textsc{Adams} and \textsc{Ballmann} regarding the action of an amenable group on a proper CAT(0) space.
\end{abstract}

\tableofcontents

\newpage

\section{Introduction}
\subsection{Overview of the Results}
\noindent One of the first link between amenability and negative curvature is certainly the result of \textsc{Avez} \cite{Av70} which states that a compact Riemannian manifold with nonpositive sectional curvature is flat if and only if its fundamental group is of polynomial growth. The amenability here is implicit and it was \textsc{Gromov} \cite{Gro81} who pointed out that in this case the growth of the fundamental group is polynomial if and only if it is an amenable group. After several generalizations obtained by \textsc{Zimmer} \cite{Zim83}, \textsc{Burger} and \textsc{Schroeder} \cite{BS87}, \textsc{Adams} and \textsc{Ballmann} proved the following theorem :

\begin{thm*}[\cite{AB98}]\label{ThmAB}
Let $X$ be a proper CAT(0) space. If $G\subseteq\text{Isom}(X)$ is an amenable group, then at least one of the following two assertions holds.
\begin{enumerate}
\item[(i)] There exists $\xi\in\bd X$ which is fixed by $G$.
\item[(ii)] The space $X$ contains a $G$-invariant flat\footnote{A \textit{flat} is a closed and convex subspace of $X$ which is isometric to $\mathbf{R}^n$ for some $n\in\mathbf{N}$. In particular a point $x\in X$ is a flat of dimension zero.}.
\end{enumerate}
\end{thm*}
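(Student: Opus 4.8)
The plan is to manufacture from amenability a $G$-invariant probability measure on a compactification of $X$ and then extract canonical geometry from it. Since $X$ is proper, the visual compactification $\xbar := X \cup \bd X$ is compact and metrizable, every isometry of $X$ extends to a self-homeomorphism of $\xbar$ preserving the partition $X \sqcup \bd X$, and so $G$ acts by homeomorphisms on $\xbar$. The space $\p(\xbar)$ of Borel probability measures on $\xbar$ is then a nonempty weak-$\ast$ compact convex subset of $C(\xbar)^{\ast}$, and the induced $G$-action on it is affine and weak-$\ast$ continuous; amenability (Day's fixed point theorem) yields a fixed point, i.e. a $G$-invariant probability measure $\mu$ on $\xbar$. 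Anything built canonically from $\mu$ is automatically $G$-invariant, so it remains to solve a purely geometric problem: from a probability measure on $\xbar$, canonically produce either a point of $\bd X$ or a flat in $X$.

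Since $G$ preserves $X$ and $\bd X$ separately, I would split $\mu = \mu|_X + \mu|_{\bd X}$ with both summands $G$-invariant, and look first at the part living in $X$. If $\mu(X) > 0$, renormalise $\mu|_X$ to a $G$-invariant probability measure $\nu$ on $X$ and consider $\varphi(x) := \int_X d(x,y)^2\, d\nu(y) \in [0,+\infty]$. The CAT(0) inequality at midpoints gives $\varphi(m) \le \tfrac12\varphi(x_1) + \tfrac12\varphi(x_2) - \tfrac14 d(x_1,x_2)^2$ for $m$ the midpoint of $x_1,x_2$, so $\varphi$ is uniformly convex; the bound $d(x,y)^2 \le 2d(x,x_0)^2 + 2d(x_0,y)^2$ makes $\varphi$ finite everywhere once it is finite once; and $\varphi$ is coercive since a fixed fraction of the mass of $\nu$ lies in a fixed ball. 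Hence, when $\varphi \not\equiv +\infty$, it has a unique minimiser $c \in X$, necessarily $G$-fixed because $\varphi$ is $G$-invariant, so $\{c\}$ is a $G$-invariant flat of dimension $0$ — case (ii). Otherwise $\mu$ may be assumed carried by $\bd X$: this is immediate if $\mu(X) = 0$, and when $\mu(X) > 0$ with $\varphi \equiv +\infty$ one argues, after passing to an ergodic component and contracting $\nu$ along geodesics toward an escaping orbit and taking a weak-$\ast$ limit, that the mass can be pushed onto $\bd X$ — the delicate step in this branch.

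For $\mu$ supported on $\bd X$, I would fix $o \in X$ and form the Busemann barycenter $f_{\mu}(x) := \int_{\bd X} \beta_{\xi}(o,x)\, d\mu(\xi)$, with $\beta_{\xi}$ the Busemann cocycle of $\xi$; it is convex, $1$-Lipschitz, and its $G$-translates differ from $f_{\mu}$ by additive constants. If $\inf_X f_{\mu}$ is not attained, following the directions along which $f_{\mu}$ strictly decreases singles out a canonical, hence $G$-fixed, point $\xi_0 \in \bd X$ — case (i); equivalently $\text{supp}(\mu)$ then has Tits circumradius $< \pi/2$ and one takes its circumcenter at infinity. If $\inf_X f_{\mu}$ is attained, its minimal set $M$ is a nonempty closed convex $G$-invariant subset of $X$ on which, by the equality case of midpoint convexity, each $\beta_{\xi}$ with $\xi \in \text{supp}\,\mu$ is affine; this splits $M$ isometrically as $F \times Y$ with $F$ a Euclidean factor along which $f_{\mu}$ is constant, the stabiliser of $\mu$ (hence $G$) preserves the splitting, and $G$ acts on the proper CAT(0) space $Y$. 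One then recurses on $Y$ with the induced boundary data, a branch ending either in a $G$-fixed point of $\bd Y \subseteq \bd X$ (case (i)) or in a flat which, reassembled with the accumulated Euclidean factors, is the sought $G$-invariant flat (case (ii)). I expect the real obstacle to be precisely this recursive mechanism: checking that "infimum not attained" genuinely yields a canonical fixed boundary point rather than stalling, pinning down a dimension- or rank-type invariant of the space that strictly decreases under each reduction so the recursion is well-founded, and keeping every construction canonical so that $G$-equivariance survives the push-to-the-boundary step and each splitting. Organising the whole case analysis by where the mass of $\mu$ concentrates is where essentially all of the work lies.
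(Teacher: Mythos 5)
\noindent First, a remark on scope: the statement you were asked about is quoted from \cite{AB98} and is not reproved in this paper; what the paper contains is an adaptation of the Adams--Ballmann strategy to Borel fields (Section 5), so I compare your sketch against that strategy. Your overall architecture --- an invariant measure, a Busemann barycenter, the dichotomy ``infimum attained / not attained'', circumcenters at infinity --- is recognisably the right one, but two steps are genuinely broken. The first is the branch $\mu(X)>0$ with $\varphi\equiv+\infty$: contracting $\nu$ along geodesics toward an escaping orbit and taking a weak-$\ast$ limit does not produce a $G$-invariant measure (the contraction is not $G$-equivariant, and nothing prevents the limit from charging $X$ again), and you offer no substitute. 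This branch is in fact avoidable by an elementary argument you do not use: if $\nu$ is a $G$-invariant probability measure on the proper space $X$, choose $r$ with $\sup_x\nu(\overline B(x,r))>1/2$; a suitable superlevel set of $x\mapsto\nu(\overline B(x,r))$ is then nonempty, invariant and of diameter at most $2r$, and its circumcenter is a $G$-fixed point. Better still, the Adams--Ballmann route never places a measure on $\overline X$ at all: amenability is invoked exactly once, to put an invariant probability measure on $\partial Y$ for the non-Euclidean factor $Y$ of a \emph{minimal} invariant closed convex subset (compare Proposition \ref{PropSecb} and the final proof).

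\noindent The second and more serious gap is your claim that the minimal set $M$ of $f_\mu$ ``splits isometrically as $F\times Y$ with $F$ a Euclidean factor'' because each $\beta_\xi$, $\xi\in\text{supp}\,\mu$, is affine on $M$. A boundary point whose Busemann function is affine (a flat point) yields only an isometric embedding of $M$ into a product $Y\times E$ (Theorem \ref{ThmDecompositionAB}), not a splitting of $M$: the convex set $\{(x,y)\in\mathbf{R}^2\mid y\ge x^2\}$ has a single point at infinity, its Busemann function is affine, yet the set contains no line. Extracting a genuine $\mathbf{R}^n$-factor requires the flat points to have flat antipodes, which is precisely why Adams and Ballmann introduce $A=F\cap(-F)$ and $P=\{\xi\in F\mid\angle(\xi,A)=\pi/2\}$, prove $\rad(P)<\pi/2$ when $P\neq\emptyset$ (yielding a canonical, hence fixed, circumcenter in $\partial X$), and obtain the true splitting $C\simeq\mathbf{R}^n\times Y$ with $\partial Y$ free of flat points only when $P=\emptyset$. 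Your recursion then compounds the problem: you rightly flag that its termination is unclear, and the cure is to remove it entirely by passing \emph{first} to a minimal invariant closed convex set via Zorn's lemma on the lattice of such sets (the case where no minimal element exists being resolved separately by the circumcenter of the limit set at infinity of a decreasing chain with empty intersection, as in Theorem \ref{Thmminempty} and Proposition \ref{PropLimitField}). With minimality in hand, a single pass suffices: if the infimum of the barycenter is attained, its minimal set equals $C$ by minimality, so the barycenter is constant and the support of the measure consists of flat points of $\partial Y$ --- a contradiction; if not, the sublevel sets furnish the fixed boundary point. Finally, your parenthetical ``equivalently $\text{supp}(\mu)$ has Tits circumradius $<\pi/2$'' misidentifies the mechanism: the canonical point is the circumcenter of the limit set at infinity of the sublevel sets, and upgrading its radius bound from $\le\pi/2$ to $<\pi/2$ (needed for uniqueness of the circumcenter) is itself a nontrivial point, handled by a separate lemma in \cite{AB98} and by the finite-dimensionality hypothesis in this paper.
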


\noindent It's interesting for our work to mention that in his article \textsc{Zimmer} also proved another result :

\begin{thm*}[\cite{Zim83}]
Let $\F$ be a Riemannian measurable foliation with transversally (i. e. holonomy) invariant measure and finite total volume. Assume that almost every leaf is a complete simply connected manifold of nonpositive sectional curvature. If $\F$ is amenable, then almost every leaf is flat.
\end{thm*}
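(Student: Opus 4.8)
The idea is to run, in the foliated setting, the argument of Adams and Ballmann \cite{AB98}, and then to exploit the two hypotheses that do \emph{not} occur in their theorem --- a transversally invariant measure $\nu$ and finite total volume --- in order to discard the ``fixed point at infinity'' alternative and to force the ``invariant flat'' alternative to exhaust each leaf. First replace $\F$ by the measured equivalence relation $\R$ it induces on a complete Borel transversal $(T,\nu)$, normalising $\nu$ to a probability measure, which is legitimate since the total volume is finite. As almost every leaf is simply connected its foliation holonomy is trivial, so $\R$ records all of the transverse dynamics, and $\R$ is amenable because $\F$ is. The leaves, being complete simply connected manifolds of nonpositive curvature, are Hadamard spaces --- in particular proper CAT(0) spaces --- and over $T$ they assemble into a Borel field $L_\bullet$ of proper CAT(0) spaces on which $\R$ acts by isometries; their visual boundaries $\bd L_\bullet$ then form a Borel field of compact metrisable spaces carrying the induced $\R$-action. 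Amenability of $\R$ together with compactness of the fibres $\bd L_t$ yields a Borel $\R$-invariant section $t\mapsto\mu_t\in\mathrm{Prob}(\bd L_t)$, the foliated analogue of the invariant boundary measure on which \cite{AB98} rests.

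Next I would analyse $\mu_\bullet$ fibrewise as in \cite{AB98}. Fixing basepoints $p_t\in L_t$, form the averaged Busemann function $f_t(x)=\int_{\bd L_t}b_\xi(x)\,d\mu_t(\xi)$, normalised by $b_\xi(p_t)=0$; it is convex and $1$-Lipschitz, $\R$-invariant up to an additive constant, and $\mathrm{Hess}\,f_t\ge 0$ since Busemann functions are convex in nonpositive curvature. Following Adams--Ballmann, the Tits circumradius of $\mathrm{supp}\,\mu_t$ produces a dichotomy: either there is a canonical $\R$-invariant Borel section $t\mapsto\xi_t\in\bd L_t$, i.e.\ a boundary point fixed by each leaf, or there is a canonical $\R$-invariant Borel field of flats $F_\bullet$ with $F_t\subseteq L_t$ of positive dimension (on the minimum set of $f_t$, when it exists, $f_t$ is constant, hence $\mathrm{Hess}\,f_t\equiv 0$ there, which by nonnegativity forces the Hessian to vanish along the directions spanning the flat).

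It remains --- and here I expect the real difficulty --- to inject the measured structure. In the fixed-point-at-infinity case, $-\nabla b_{\xi_t}$ patches to a globally defined $\R$-invariant leafwise unit vector field on the total space $M$; its leafwise geodesic flow $\varphi_s$ is an automorphism of the measured foliation, hence a bijective self-map of $M$, and (disintegrating the volume over $\nu$, which $\varphi_s$ preserves) its Jacobian equals the leafwise Jacobian $\exp\bigl(-\int_0^s\Delta b_{\xi_t}\circ\varphi_u\,du\bigr)\le 1$. Since $\int_M J\varphi_s\,d\mathrm{vol}=\mathrm{vol}(M)<\infty$ by the change-of-variables formula, we must have $J\varphi_s\equiv 1$, that is $\Delta b_{\xi_t}\equiv 0$ almost everywhere; combined with $\mathrm{Hess}\,b_{\xi_t}\ge 0$ this forces $\mathrm{Hess}\,b_{\xi_t}\equiv 0$, so $\nabla b_{\xi_t}$ is parallel and, by de~Rham, $L_t$ splits off an isometric $\RR$-factor.

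A variant of the same Jacobian computation --- applied, say, to a regular boundary point of the parallel set of $F_t$, or to a centre-of-mass direction of $\mu_t$ along $\bd F_t$ --- disposes of the flat case and shows that $F_t$ cannot be a proper subset of $L_t$. One then recurses on the complementary Hadamard factor, of strictly smaller dimension and carrying an induced amenable relation, until nothing remains, whence almost every leaf is flat. The delicate points are: making rigorous the principle that an automorphism of a finite measured foliation cannot shrink the leafwise volume density, for possibly noncompact leaves, via the disintegration over $\nu$; checking that the amenability and finiteness hypotheses survive each step of the recursion; and invoking the correct nonpositive-curvature rigidity ($\mathrm{Hess}$ of a Busemann function vanishing $\Rightarrow$ a Euclidean de~Rham factor). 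It is exactly at these points that the transverse invariant measure and finite total volume upgrade the bare Adams--Ballmann dichotomy into the flatness conclusion.
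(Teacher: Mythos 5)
This statement is quoted from \textsc{Zimmer} \cite{Zim83} purely as motivation for Theorem \ref{ThmAH}; the paper contains no proof of it, so there is nothing to compare your argument against line by line. The closest object in the paper is the proof of Theorem \ref{ThmAH} itself, which adapts the \textsc{Adams}--\textsc{Ballmann} strategy \cite{AB98} to Borel fields of proper CAT(0) spaces: amenability produces an invariant section of probability measures on a field of boundaries (Example \ref{Exmoy}), one averages Busemann functions (Proposition \ref{PropSecb}), and one analyses the resulting quasi-invariant convex function via the complete-lattice/Zorn argument, the limit sets at infinity, and the Adams--Ballmann decomposition. Your sketch follows the same skeleton, specialised to Riemannian leaves, and then adds the one ingredient the paper's theorem does not have and Zimmer's does: the transversally invariant measure and finite volume, used through the Jacobian of the leafwise Busemann gradient flow to force $\Delta b_{\xi}\equiv 0$, hence $\mathrm{Hess}\,b_{\xi}\equiv 0$, hence a Euclidean de Rham factor. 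That is indeed the mechanism by which Zimmer upgrades ``fixed point at infinity or invariant flat'' to ``flat'', so the overall architecture of your reconstruction is sound.

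Two points in your sketch are genuinely underpowered. First, the disposal of the ``invariant flat'' case (``applied, say, to a regular boundary point of the parallel set of $F_t$\dots'') is not an argument: the minimum set of a convex averaged Busemann function is convex but is not a flat, and extracting a flat from it is exactly the delicate part of \cite{AB98} (the decomposition $Y\times E$ via flat points of the boundary, Theorem \ref{ThmDecompositionAB} and Lemmas \ref{LemFlatPointsBorelSubfield}--\ref{LemseP&seA} here); you cannot wave at a ``variant of the same Jacobian computation'' without specifying which flow you integrate. In the Riemannian measure-preserving setting you can in fact bypass the dichotomy entirely: a Hadamard leaf of positive dimension is never bounded, so $\bd L_t\neq\emptyset$ always, amenability always yields an invariant $\mu_t\in\mathrm{Prob}(\bd L_t)$, and the Laplacian/volume argument always splits off an $\RR$-factor --- one then inducts on dimension with no case distinction. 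Second, everything you write presupposes that sections such as $t\mapsto\mu_t$, $t\mapsto\xi_t$, the leafwise splittings, and the induced relation on the complementary factor can be chosen Borel and that amenability passes to each stage of the recursion; this is precisely the content that occupies Sections 2--4 of the present paper (Borel subfields of closed convex sets, Theorem \ref{ThmBorelSubfieldCompleteLattice}, Proposition \ref{PropLimitField}, Theorem \ref{ThmCircum}), and treating it as routine is where a blind write-up of Zimmer's theorem would actually founder.
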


\noindent Without getting in all details, a Riemannian measurable foliation has to be understood as an equivalence relation on a measure space such that each equivalence class (leaf) is a smooth manifold endowed with a Riemannian structure that varies in a Borel way. Amenability of the foliation is defined as the amenability of the induced relation on a transversal (a Borel subset of the probability space that meets almost every leaf only countably many times).

\noindent In this paper we study an object close to the one of Riemannian measurable foliation in the context of CAT(0) spaces (or more generally of metric spaces), namely a Borel field of metric spaces. Suppose that a Borel space $\Omega$ is given and that to each $\omega$ we assign a metric space $\om X$. The definition sets what it means for such an assignment to be Borel. It has been studied by many authors (see for example \cite{Cas67}, \cite{Him75}, \cite{DAP76} or \cite{CV77}), often in the particular case when all the $\om X$ are a subspace of a given separable metric space $X$. This notion seems to be the natural one to define the action of an equivalence relation (or more generally of a groupoid). By adapting the techniques of \cite{AB98} to the context of equivalence relations and Borel fields of CAT(0) spaces we managed to prove the following theorem - see the sequel for a precise meaning of the terminology.

\begin{thm}\label{ThmAH}
Let $(\Omega, \mathcal{A}, \mu)$ be a standard probability space and $\R\subseteq\Omega^2$ be an amenable ergodic Borel equivalence relation which quasi preserves the measure. Assume that $\R$ acts by isometries on a Borel field $\ch{X}$ of proper CAT(0) spaces of finite covering dimension. Then at least one of the following assertion is verified.
\begin{enumerate}
\item[(i)] There exists an invariant Borel section of points at infinity $[\se\xi]\in{L}(\Omega,\bd\se X)$.
\item[(ii)] There exists an invariant Borel subfield $(\Omega,\se A)$ such that $\om A\simeq\RR^n$ for almost every $\omega\in\Omega$.
\end{enumerate}
\end{thm}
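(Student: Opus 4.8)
The plan is to transpose the argument of \textsc{Adams} and \textsc{Ballmann} \cite{AB98} to the fibered setting: run it fiber by fiber, control the Borel measurability of every canonical construction, and use ergodicity to turn ``positive measure'' into ``conull'' --- which in particular forces the integer $n$ of alternative (ii) to be essentially constant --- and finiteness of the covering dimension to make an induction terminate. Precisely, I would argue by induction on $d=\supess_{\omega}\dim\om X\in\NN$, the base case $d=0$ being trivial (each $\om X$ is then a point, which is simultaneously the desired invariant section and a flat of dimension $0$). At each stage one produces either an invariant section at infinity, or an invariant flat subfield, or an invariant nonempty closed convex Borel subfield $\ch{Y}$ with $\supess_{\omega}\dim\om Y<d$; in the last case one applies the inductive hypothesis to the restricted $\R$-action on $\ch{Y}$, which remains amenable, ergodic and measure quasi-preserving, using that $\bd\om Y$ embeds $\R$-equivariantly into $\bd\om X$ and that a flat subfield of $\se Y$ is also one of $\se X$.

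First I would invoke amenability. The field $\chomxbar$ of compactified fibers is a Borel field of compact metrizable spaces on which $\R$ acts by homeomorphisms; hence so is the Borel field whose fiber over $\omega$ is the space $\p(\ov{\om X})$ of Borel probability measures on $\ov{\om X}$, equipped with the weak-$*$ topology and with the affine, continuous, pushforward $\R$-action. Amenability of $\R$ then furnishes an $\R$-invariant Borel section $\omega\mapsto\nu_{\omega}\in\p(\ov{\om X})$ --- the equivalence-relation form of the fact that an amenable group has a fixed point in any affine action on a compact convex set. If, on an $\R$-invariant (hence, by ergodicity, conull) set, $\nu_{\omega}$ is a Dirac mass, its support is a single point $p_{\omega}\in\ov{\om X}$, and $\omega\mapsto p_{\omega}$ is a Borel invariant section lying either in $L(\Omega,\bd\se X)$, which is alternative (i), or in $L(\Omega,\se X)$, in which case $\ch{p}$ is an invariant flat subfield with $\om p\simeq\RR^{0}$, which is alternative (ii). So we may assume $\nu_{\omega}$ is not a Dirac mass for a.e.\ $\omega$.

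Now the Busemann dichotomy, carried out fiberwise. Fix a Borel basepoint section $\se o$, and for $y\in\ov{\om X}$ let $b_{y}$ be the horofunction at $y$ normalised at $o_{\omega}$ (so $b_{y}(x)=d_{\omega}(x,y)-d_{\omega}(o_{\omega},y)$ when $y\in\om X$, and $b_{y}$ is the Busemann function when $y\in\bd\om X$); these functions are $1$-Lipschitz and convex, and $y\mapsto b_{y}(x)$ is continuous on $\ov{\om X}$, so $f_{\omega}(x):=\int_{\ov{\om X}}b_{y}(x)\,d\nu_{\omega}(y)$ defines a $1$-Lipschitz convex function $f_{\omega}\colon\om X\to\RR$, Borel in $\omega$; $\R$-invariance of $\se\nu$ makes the family $(f_{\omega})_{\omega}$ invariant up to an additive $\R$-cocycle, so the sublevel sets, the infimum, and the minimum set transform $\R$-equivariantly. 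The set of $\omega$ on which $f_{\omega}$ attains its infimum is $\R$-invariant, hence null or conull. If it is conull, the minimum sets $\om Y=\{x\in\om X:f_{\omega}(x)=\inf f_{\omega}\}$ assemble into an invariant nonempty closed convex Borel subfield $\ch{Y}$; by the argument of \cite{AB98}, either $\dim\om Y<\dim\om X$ on a positive-measure, hence conull, set --- and one recurses on $\ch{Y}$ --- or $f_{\omega}$ is constant, $\nu_{\omega}$ is a flat (balanced) measure, and \cite{AB98} attach to it a canonical flat $\om A\subseteq\om X$; the resulting $\ch{A}$ is an invariant flat subfield whose fiberwise dimension is $\R$-invariant, hence a.e.\ equal to a constant $n$ by ergodicity, which is alternative (ii). If instead $f_{\omega}$ fails to attain its infimum on a conull set, a minimising sequence escapes to infinity along a canonically determined direction, producing a $\R$-equivariant point at infinity $\eta_{\omega}$ and an invariant section $\se\eta\in L(\Omega,\bd\se X)$, which is alternative (i).

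The main obstacle is measurability: every ``canonical'' object borrowed from \cite{AB98} --- the horofunction field $y\mapsto b_{y}$, the minimum set of a convex function depending measurably on $\omega$, the endpoint of a descending geodesic ray, circumcenters and barycenters of Borel fields of bounded sets or of measures, and above all the canonical flat attached to a flat measure --- must be shown to carry Borel sections and Borel subfields to Borel sections and Borel subfields, and to be natural enough that equivariance under the measurable cocycle $\alpha$ defining the action is automatic. These are precisely the structural facts about Borel fields of proper CAT(0) spaces that one should establish beforehand, classical measurable-selection theorems (Kuratowski--Ryll-Nardzewski, Jankov--von Neumann) being the natural tools, and the canonical-flat construction is the delicate one. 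Two further points deserve attention: the invariant-section statement used in the amenability step (the correct analogue, for an amenable equivalence relation, of the fixed-point property of amenable groups for affine compact-convex actions), and the termination of the induction, where one must check --- this is where finiteness of the covering dimension enters --- that passing to a minimum-set subfield which is not fiberwise a flat strictly decreases $\supess_{\omega}\dim\om X$.
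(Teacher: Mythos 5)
Your overall plan---run Adams--Ballmann fiberwise, let amenability produce an invariant section of probability measures, integrate Busemann functions, and use ergodicity to upgrade every invariant positive-measure set to a conull one---is the right spirit, and the measurability issues you flag (circumcenters, canonical limit directions, the canonical flat) are exactly what the paper's technical sections are built for. But there is a genuine gap in your termination argument. You induct on $d=\supess_\omega\dim\om X$ and assert that when $\inf f_\omega$ is attained, ``by the argument of \cite{AB98} either $\dim\om Y<\dim\om X$ \dots or $f_\omega$ is constant.'' No such dichotomy holds: the minimum set of a convex integral of Busemann functions can be a proper, non-flat, closed convex subset of the \emph{same} covering dimension. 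Already on a simplicial tree shaped like an ``H'' (two vertices joined by an edge, each carrying two further rays), the measure giving mass $1/4$ to each of the four ends yields an $f$ whose minimum set is exactly the central edge: same covering dimension as the tree, not isometric to any $\RR^n$, and $f$ not constant. So the inductive step is unjustified and the recursion has no reason to terminate. What \cite{AB98} and the paper actually use is \emph{minimality}: one first produces, via Zorn's lemma, a minimal invariant nonempty closed convex subfield $\se C$, and then the nonempty invariant minimum set of $f$ must equal $\se C$, forcing $f$ to be constant and the boundary points in the support of the measure to be flat. Making Zorn's lemma work fiberwise is a substantial piece of machinery for which your sketch has no substitute: the paper shows that classes of Borel subfields of closed sets form a complete lattice via essential suprema of distance functions (Theorem \ref{ThmBorelSubfieldCompleteLattice}), extracts a countable decreasing subchain from any chain (Lemma \ref{LemChain}), and, when that subchain has empty intersection, manufactures the invariant point at infinity from the circumcenter of the limit set of projections (Theorem \ref{Thmminempty}, Proposition \ref{PropLimitField}).

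Relatedly, you misplace the role of the finite covering dimension hypothesis: it is not there to make dimensions drop, but to make your ``canonically determined direction'' exist and be unique. The limit set at infinity of a decreasing family of convex sets with empty intersection (or of the sublevel sets of $f$ when the infimum is not attained) has Tits diameter at most $\pi/2$ (Lemma \ref{LemDefLimitSet}), and one needs the Fujiwara--Nagano--Shioya bound $\rad\le\pi/2-\delta$, valid only in finite dimension (Theorem \ref{ThmFNS}), to obtain a \emph{unique} circumcenter---uniqueness being precisely what makes the resulting section at infinity Borel and invariant (Theorem \ref{ThmCircum}). A smaller divergence: the paper applies amenability not to $\text{Prob}(\overline{\se X})$ but to $\text{Prob}(\bd\se Y)$, where $\se Y$ is the non-Euclidean factor of the Adams--Ballmann decomposition of the minimal subfield $\se C$ (after first disposing of the subfield $\se P$ and of the bounded case); with that ordering the ``flat measure'' case is an outright contradiction with the construction of $\se Y$, rather than requiring a separate canonical-flat construction at that stage.
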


\noindent This result is a generalization of the result by \textsc{Adams} and \textsc{Ballmann} just as the one by \textsc{Zimmer} generalized the one by \textsc{Avez}. Recently \textsc{Caprace} and \textsc{Lytchak} \cite{CL10} proved a parallel version of \textsc{Adams} and \textsc{Ballmann}'s result by replacing the locally compactness assumption by the one of finite telescopic dimension. Inspired by this result and by the tools we developed, \textsc{Duschesne} \cite{Duc11} managed to prove a version of the last theorem for a Borel field of such spaces.

\noindent The work presented here was done during our PhD (\cite{And10} \& \cite{Hen10}) under the supervision of Nicolas \textsc{Monod}. Both authors would like to thank him warmly for his help, support and inspiration during this period.

\subsection{Basic Definitions and Notations}

\noindent Let $(X,d)$ be a metric space. If $x\in X$ and $r$ is a real number, we use the notation
$$B(x,r)=\{y\in X\mid d(y,x)<r\},\quad\overline B(x,r)=\{y\mid d(x,y)\le r\}\quad\text{and}\quad S(x,r)=\{y\in X\mid d(x,y)=r\}$$
to denote respectively the open, the closed ball and the sphere centered at $x$ of radius $r$. Sometimes we'll also need to use the closure of the open ball that is written $\overline{B(x,r)}$.

\noindent A metric space is called \textit{proper} if all closed balls are compact.

\noindent A map $\gamma : I\rightarrow X$ from an interval $I$ of real numbers to the space $X$ is a \textit{geodesic} if it is isometric. We say that it is a geodesic \textit{segment} if $I$ is compact and a geodesic \textit{ray} if $I=[0,\infty[$. The space $X$ is \textit{geodesic} if every pair of points can be joined by a geodesic.

\noindent  For $x,y\in X$, we denote the image of a geodesic $\gamma:[a,b]\rightarrow X$ such that $\gamma(a)=x$ and $\gamma(b)=y$ by $[x,y]\subseteq X$. A \textit{geodesic triangle} with vertex $x,y,z\in X$ is $\triangle(x,y,z):=[x,y]\cup[x,z]\cup[y,z]$. A \textit{comparison triangle} for $x,y,z\in X$ is an Euclidean triangle $\triangle(\overline{x}, \overline{y}, \overline{z})\subseteq\mathbf{R}^2$ such that $d(x,y)=d(\overline{x}, \overline{y})$, $d(x,z)=d(\overline{x}, \overline{z})$, $d(y,z)=d(\overline{y}, \overline{z})$. It is unique up to isometry. If $q\in[x,y]$, we denote by $\overline q$ the point in $[\overline{x}, \overline{y}]$ such that $d(x,q)=d(\overline x,\overline q)$.

\begin{defi}
The space $X$ is CAT(0) if for every geodesic triangle $\Delta(x,y,z)$ and every point $q\in[x,y]$ the following inequality holds
$$d(x,q)\le d(\overline x,\overline q).$$
\end{defi}

\noindent The general background reference concerning CAT(0) spaces is \cite{BH99} (see also \cite{Bal95}). We'll introduce the various objects and definitions associated to CAT(0) spaces that we need - like the boundary, the projection on a convex subspace, the angles, etc. -  in the section \ref{SecBorelCAT(0)} where we're going to prove that these notions behave ``well'' in the context of Borel fields of CAT(0) spaces.
\smallskip

\noindent Beside CAT(0) spaces, other basic objects that we will consider in this paper are \textit{Borel equivalence relations}. By a \textit{Borel space} we mean a set $\Omega$ equipped with a $\sigma$-algebra $\A$ and we denote it by $(\Omega,\A)$. If $\Omega$ is a completely metrizable separable topological space and $\A$ is the $\sigma$-algebra generated by the open subsets, then $(\Omega,\A)$ is called a \textit{standard Borel space}. A theorem of \textsc{Kuratowski} states that such spaces are all Borel isomorphic provided they're uncountable (see \textit{e.g.} \cite{Kec95}). A standard Borel space together with a probability measure is called a \textit{standard probability space}.

\begin{defi}
Let $(\Omega,\A)$ be a standard Borel space. A Borel equivalence relation $\R$ is a Borel subset $\R\subseteq\Omega^2$ (where $\Omega^2$ is endowed with the product $\sigma$-algebra) which satisfies the following conditions.
\begin{enumerate}
\item[(i)] For every $\omega\in\Omega$, the set $\R[\omega]:=\{\omega'\in\Omega\mid(\omega, \omega')\in\R\}$ called the class of $\omega$ is finite or countably infinite,
\item[(ii)] The set $\R$ contains the diagonal $\Delta_\Omega:=\{(\omega,\omega)\mid\omega\in\Omega\}$, is symmetric in the sense that $\R=\R^{-1}$ (if for $S\subseteq\Omega^2$ we define $S^{-1}:=\{(\omega', \omega)\mid(\omega, \omega')\in S\}$) and satisfies the following transitivity property : if $(\omega, \omega'), (\omega', \omega'')\in\R$, then $(\omega, \omega'')\in\R$.
\end{enumerate}
\end{defi}

\noindent Standard references for equivalence relations are \cite{DJK94}, \cite{FM77a}, \cite{FM77b}, \cite{JKL02}, \cite{Kan08} or \cite{KM04}.

\noindent For a Borel subset $A\subseteq\Omega$, we note $\R[A]:=\cup_{\omega\in A}\R[\omega]$ the \textit{saturation of $A$} which is a Borel set and we say that $A$ is \textit{invariant} if $\R[A]=A$. If a countable group $G$ acts in a Borel way on $\Omega$, then it's action defines naturally the Borel equivalence relation $\R_G$ where $(\omega, \omega')\in\R_G$ if and only if there exists $g\in G$ such that $g\omega=\omega'$. Reciprocally a now classical result due to \textsc{Feldman} and \textsc{Moore} states that each Borel equivalence relation may be obtained by such an action \cite{FM77a}.

\noindent If $(\Omega, \A, \mu)$ is now a standard probability space, we say that $\R$ \textit{quasi preserves the measure} $\mu$ if for every $A\in\A$ such that $\mu(A)=0$ we have $\mu(\R[A])=0$. This is equivalent to the requirement that for each group $G$ such that $\R=\R_G$ the image measures $g_\ast(\mu)$ where $g\in G$, are equivalent to $\mu$ (\textit{i.e.} the measure $g_\ast(\mu)$ is absolutely continuous with respect to $\mu$ and conversely). In case where the measure $\mu$ is invariant by $G$ we say that $\R$ \textit{preserves the measure}. The relation $\R$ is \textit{ergodic} if each Borel saturated set is such that $\mu(A)=0$ or $\mu(A)=1$.

\section{Borel Fields of Metric Spaces}

\subsection{Definitions and First Results}

\noindent A \textit{field of metric spaces} on a set $\Omega$ is a family of metric spaces $\{(\om X,\om d)\}_{\omega\in\Omega}$ indexed by the elements of $\Omega$. The set $\Omega$ is called the \textit{base} of the field and we denote the field by $(\Omega,(\se X,\se d))$ or $(\Omega, \se X)$ or just $\se X$ when the base is implicit. A \textit{section} of the field is the choice of an element of $\om X$ for each $\omega\in\Omega$, so it can be thought as an element of the product $\prod_{\omega\in\Omega}\om X$. We write a section $\se x$ and for each $\omega\in\Omega$ $\om x$ is used to denote the given element of $\om X$. We denote by $\calSe X$ the set of all sections of the field $(\Omega,\se X)$. Given two sections $\se x,\se y\in\calSe X$ we introduce the following \textit{distance function}
$$\begin{array}{cccl}
\se d(\se x,\se y):&\Omega&\mapsto&[0,\infty[\\
&\omega&\mapsto&\om d(\om x,\om y).
\end{array}$$

\noindent Suppose now that $(\Omega,\A)$ is a Borel space.

\begin{defi}\label{DefiBorelField}
Let $(\Omega,{\cal A})$ be a Borel space and $\ch X$ be a field of metric spaces on $\Omega$. A Borel structure on $\ch X$ is a subset $\callo X\subseteq\calSe X$ such that
\begin{itemize}
\item[(i)] (Compatibility) For all $\se x, \se y\in\callo X$ the function $\dist{x}{y}$ is Borel.
\item[(ii)] (Maximality) If $\se y\in\calSe X$ is such that $\se d(\se x,\se y)$ is Borel for all $\se x\in\callo X$, then $\se y\in\callo X$.
\item[(iii)] (Separability) There exists a countable family ${\cal D}:=\{\se {x^n}\}_{n\ge1}\subseteq\callo X$ such that $\overline{\{\om{x^n}\}}_{n\ge1}=X_\omega$ for all $\omega\in\Omega$. Sometimes we'll write $\om\D:=\{\om{x^n}\}_{n\ge1}$.
\end{itemize}
If there exists such a $\callo X$, we say that $\ch X$ is a Borel field of metric spaces and $\callo X$ is called the Borel structure of the field. The elements of $\callo X$ are called the Borel sections. A set ${\cal D}$ satisfying the condition (iii) is called a fundamental family of the Borel structure $\callo X$.
\end{defi}

\begin{rems}\quad
\smallskip

\noindent (1) Observe that the Condition \ref{DefiBorelField} (iii) forces all the metric spaces $\om X$ to be separable.
\smallskip

\noindent (2) It will follow from Lemma \ref{LemmaBorelGluingPoncutalLimit} and Lemma \ref{LemmaBorelStructureFundamentalFamily} that if a field is \textit{trivial}, \textit{i.e.} all the $\om X$ are the same separable metric space $X$, then the set of all Borel functions from $\Omega$ to $X$ is naturally a Borel structure on this field. This observation should reinforce the intuition of thinking about the Borel sections as a replacement of the Borel functions which cannot be defined when the field is not trivial.
\smallskip

\noindent (3) If $\Omega'\subseteq\Omega$ is a Borel subset, then $\mathcal L(\Omega',\se X):=\{(\se x\mid_{\Omega'})\mid\se x\in\callo X\}$ is a Borel structure on the field $(\Omega',\se X)$, where $\se x\mid_{\Omega'}$ denotes the section of ${\cal S}(\Omega',\se X)$ obtained by restricting the section $\se x$ to the subset $\Omega'$.
\smallskip

\noindent (4) Borel fields of metric spaces can also be presented as bundles (see \textit{e.g.} \cite{DAP76}).
\end{rems}

\noindent We now describe two constructions that we will use to give a useful reformulation of the maximality condition of the
Definition \ref{DefiBorelField}. Let $\{\se x^n\}_{n\ge1}$ be a sequence of elements of $\calSe X$ such that $\{\om x^n\}_{n\ge1}$ is a converging sequence in $\om X$ for all $\omega\in\Omega$. Then we can define a new section $\se x\in\calSe X$ by $\om x:=\lim_{n\tend\infty}\om x^n$ for all $\omega\in\Omega$. This section is called the \textit{pointwise limit} of the sequence $\{\se x^n\}_{n\ge1}$ and it is written $\lim_{n\tend\infty}\se x^n$. Let $\{\se x^n\}_{n\ge1}$ be a sequence of elements of $\calSe X$ and $\Omega=\sqcup_{n\ge1}\Omega_n$ be a countable Borel partition of $\Omega$. Define a new section $\se x$ by setting $\se x\mid_{\Omega_n}:=\se x^n\mid_{\Omega_n}$ for all $n\ge1$. This new section is called a \textit{countable Borel gluing} of the sequence with respect to the partition. If the partition is finite, we call the section a \textit{finite Borel gluing}.

\begin{lem}\label{LemmaBorelGluingPoncutalLimit}
Let $(\Omega,\A)$ be a Borel space and $\ch X$ be a field of metric spaces. Suppose that the set $\callo X\subseteq\calSe X$ is such that the conditions (i) and (iii) of the Definition \ref{DefiBorelField} are satisfied. Then the condition (ii) of the same definition is equivalent to
\begin{itemize}
\item[(ii)'] $\callo X$ is closed under pointwise limits and countable Borel gluings,
\end{itemize}
or to
\begin{itemize}
\item[(ii)'']  $\callo X$ is closed under pointwise limits and finite Borel gluings.
\end{itemize}
\end{lem}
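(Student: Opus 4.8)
The plan is to prove the three implications $(\mathrm{ii}) \Rightarrow (\mathrm{ii})' \Rightarrow (\mathrm{ii})'' \Rightarrow (\mathrm{ii})$, where the first two are the easy directions and the last is the substantive one. For $(\mathrm{ii}) \Rightarrow (\mathrm{ii})'$: given a pointwise convergent sequence $\{\se x^n\}$ of Borel sections with pointwise limit $\se x$, I would check that $\dist{x}{y}$ is Borel for every $\se y \in \callo X$ by writing $\om d(\om x, \om y) = \lim_{n} \om d(\om x^n, \om y)$ (continuity of the metric) and invoking that a pointwise limit of Borel functions is Borel; then maximality gives $\se x \in \callo X$. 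For a countable Borel gluing $\se x$ along $\Omega = \sqcup_n \Omega_n$, the function $\dist{x}{y}$ restricted to each $\Omega_n$ equals $\se d(\se x^n, \se y)|_{\Omega_n}$, which is Borel on the Borel set $\Omega_n$, hence $\dist{x}{y}$ is Borel on all of $\Omega$, and maximality again yields $\se x \in \callo X$. The implication $(\mathrm{ii})' \Rightarrow (\mathrm{ii})''$ is trivial since a finite Borel gluing is a special case of a countable one.

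The heart of the matter is $(\mathrm{ii})'' \Rightarrow (\mathrm{ii})$. So assume $\callo X$ satisfies (i), (iii), is closed under pointwise limits and finite Borel gluings, and let $\se y \in \calSe X$ be such that $\dist{x}{y}$ is Borel for all $\se x \in \callo X$; I must show $\se y \in \callo X$. The idea is to approximate $\se y$ pointwise by Borel sections built from the fundamental family $\D = \{\se{x^n}\}_{n \ge 1}$. Fix $\epsi > 0$. For each $n$, the set $\Omega_n^{\epsi} := \omdansom{\om d(\om y, \om{x^n}) < \epsi}$ is Borel, because $\se d(\se y, \se{x^n})$ is Borel by hypothesis; and by separability (iii), $\bigcup_{n \ge 1} \Omega_n^{\epsi} = \Omega$. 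Disjointify: set $\widetilde\Omega_n^{\epsi} := \Omega_n^{\epsi} \setminus \bigcup_{k < n} \Omega_k^{\epsi}$, giving a countable Borel partition $\Omega = \sqcup_{n} \widetilde\Omega_n^{\epsi}$, and define the section $\se z^{\epsi}$ by $\se z^{\epsi}|_{\widetilde\Omega_n^{\epsi}} := \se{x^n}|_{\widetilde\Omega_n^{\epsi}}$. This $\se z^{\epsi}$ is a countable Borel gluing of elements of $\callo X$, and it satisfies $\om d(\om y, \om z^{\epsi}) < \epsi$ for all $\omega$.

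Now taking $\epsi = 1/m$ and $\se z^m := \se z^{1/m}$, the sections $\se z^m$ converge pointwise to $\se y$. If I had closure under countable Borel gluings directly I would be done, but I only have finite gluings in $(\mathrm{ii})''$, so the remaining point is to see that each countable Borel gluing $\se z^m$ still lies in $\callo X$ using only finite gluings and pointwise limits. For this, let $\se w^{m,N}$ be the finite Borel gluing that agrees with $\se{x^n}$ on $\widetilde\Omega_n^{1/m}$ for $n \le N$ and, say, with $\se{x^1}$ on $\Omega \setminus \bigcup_{n \le N} \widetilde\Omega_n^{1/m}$; each $\se w^{m,N} \in \callo X$ by closure under finite gluings, and for every fixed $\omega$ one has $\se w^{m,N}(\omega) = \se z^m(\omega)$ for all $N$ large enough (namely once $N$ exceeds the index $n$ with $\omega \in \widetilde\Omega_n^{1/m}$), so $\se z^m = \lim_{N \to \infty} \se w^{m,N}$ pointwise, whence $\se z^m \in \callo X$. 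Then $\se y = \lim_{m \to \infty} \se z^m$ pointwise gives $\se y \in \callo X$, completing the proof. The one place requiring care — the main obstacle — is precisely this last reduction from countable to finite gluings; it is the reason both $(\mathrm{ii})'$ and $(\mathrm{ii})''$ are listed, and the eventually-stationary argument above is what makes it work. One should also double-check the harmless edge case where some $\widetilde\Omega_n^{\epsi}$ are empty, which causes no trouble.
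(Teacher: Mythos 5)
Your proof is correct and follows essentially the same strategy as the paper's: the selection of the first fundamental-family element within $\epsi$ of $\se y$ (your disjointification of the sets $\Omega_n^\epsi$ is the same as the paper's $\min$-function $\se n^k$), and the approximation of a countable Borel gluing by eventually-constant finite gluings, are exactly the paper's two key constructions. The only difference is organizational: the paper proves the cycle (ii) $\Rightarrow$ (ii)'' $\Rightarrow$ (ii)' $\Rightarrow$ (ii), isolating the countable-to-finite reduction as the separate step (ii)'' $\Rightarrow$ (ii)', whereas you run the cycle in the other direction and inline that reduction into the final implication.
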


\begin{proof} We will prove (ii) $\Rightarrow$ (ii)'' $\Rightarrow$ (ii)' $\Rightarrow$ (ii).
\smallskip

\noindent [(ii) $\Rightarrow$ (ii)''] This assertion follows easily by applying to the distance functions the facts that a limit of a pointwise converging sequence of Borel functions is still Borel and that a countable Borel gluing of Borel functions is again a Borel function.
\smallskip

\noindent [(ii)'' $\Rightarrow$ (ii)']  Assume that $\se x\in\calSe X$ is the gluing of the sequence $\{\se y^n\}_{n\ge1}\subseteq\callo X$ relatively to the decomposition $\Omega=\sqcup_{n\ge1}\Omega_n$. For all $n\ge1$ we introduce the section $\widetilde{\se y}^n$ defined by
$$\widetilde{\se y}^n\mid_{\Omega_j}:=\begin{cases}\se y^j\mid_{\Omega_j}&\text{if }1\le j\le n,\\
\se y^1\mid_{\Omega_j}&\text{if }j\ge n+1.
\end{cases}$$By hypothesis, $\widetilde{\se y}^n\in\callo X$ for all $n\ge1$ and we have $\lim_{n\tend\infty}\widetilde{\se y}^n=\se x$ so that $\callo X$ is closed under countable Borel gluing.
\smallskip

\noindent [(ii)' $\Rightarrow$ (ii)] Suppose that $\se y\in\calSe{X}$ is such that $\se d(\se x,\se y)$ is Borel for all $\se x\in\callo X$. Fix $\D:=\{\se x^n\}_{n\ge1}$ a fundamental family and define for all $k\ge1$ a function $\se n^k:\Omega\tend\NN$ by
$$\om n^k:=\min\{n\in\NN\mid\ \om d(\om x^n,\om y)\le 1/k \}.$$
Those functions are well defined because $\om\D$ is dense and Borel because
$$\omdansom{\om n^k\le N}=\cup_{j=1}^N \big(\se d(\se x^j,\se y)\big)^{-1}([0,1/k])\in\A\quad\text{for all }N\ge1.$$
For all $k\ge1$, we can define a section $\se{x^{\se n^k}}\in\calSe X$ by gluing the sequence $\{\se x^n\}$ in this way :
$$\se {x^{\se n^k}}\mid_{\big(\se n^k\big)^{-1}(\{j\})}:=\se{x^j}\mid_{\big(\se n^k\big)^{-1}(\{j\})}\quad\text{for all }\ j\ge1.$$
By hypothesis and construction $\{\se x^{\se n^k}\}_{k\ge1}\subseteq\callo X$ and $\lim_{k\rightarrow\infty}\se{x^{\se n^k}}=\se y$, so we can conclude that $\se y\in\callo X$.
\end{proof}

\noindent The following lemma gives two characterizations of the Borel sections by knowing only a fundamental family.

\begin{lem}\label{LemmaBorelStructureFundamentalFamily}
Let $(\Omega,\A)$ be a Borel set, $\ch X$ be a Borel field of metric spaces of Borel structure $\callo X$ and $\D$ be a fundamental family. Then
\begin{eqnarray*}
\callo X&=&\{\se y\in\calSe X\mid \se d(\se y,\se z)\text{ is Borel for every }\se z\in\D\}\\
&=&\{\se y\in\calSe X\mid \se y\text{ is a pointwise limit of countable Borel gluings of elements of $\D$}\}.
\end{eqnarray*}
\end{lem}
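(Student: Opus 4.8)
The plan is to prove the two claimed equalities by a chain of inclusions, leaning on Lemma \ref{LemmaBorelGluingPoncutalLimit} for the hard direction. Write $\mathcal{B}_1 := \{\se y\in\calSe X\mid \se d(\se y,\se z)\text{ is Borel for every }\se z\in\D\}$ and $\mathcal{B}_2 := \{\se y\in\calSe X\mid \se y\text{ is a pointwise limit of countable Borel gluings of elements of }\D\}$. I would establish $\callo X\subseteq\mathcal{B}_1$, then $\mathcal{B}_1\subseteq\mathcal{B}_2$, then $\mathcal{B}_2\subseteq\callo X$, which closes the loop.

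The first inclusion $\callo X\subseteq\mathcal{B}_1$ is immediate: since $\D\subseteq\callo X$ by the definition of a fundamental family, compatibility (Definition \ref{DefiBorelField}(i)) says that $\se d(\se y,\se z)$ is Borel whenever $\se y\in\callo X$ and $\se z\in\D$. The third inclusion $\mathcal{B}_2\subseteq\callo X$ is also quick: every element of $\D$ lies in $\callo X$; a countable Borel gluing of elements of $\callo X$ lies in $\callo X$ by Lemma \ref{LemmaBorelGluingPoncutalLimit} (condition (ii)$'$); and a pointwise limit of such sections again lies in $\callo X$ by the same lemma. Hence any $\se y\in\mathcal{B}_2$ belongs to $\callo X$.

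The real content is the middle inclusion $\mathcal{B}_1\subseteq\mathcal{B}_2$, and it is essentially the construction already carried out in the proof of Lemma \ref{LemmaBorelGluingPoncutalLimit}, direction (ii)$'\Rightarrow$(ii). Given $\se y\in\mathcal{B}_1$, i.e. $\se d(\se y,\se{x^n})$ is Borel for every $\se{x^n}\in\D$, I would for each $k\ge1$ define the Borel function $\om n^k:=\min\{n\in\NN\mid \om d(\om{x^n},\om y)\le 1/k\}$, which is well defined because $\om\D$ is dense in $\om X$, and Borel because $\{\omega\mid \om n^k\le N\}=\bigcup_{j=1}^N(\se d(\se{x^j},\se y))^{-1}([0,1/k])\in\A$. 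These functions yield a countable Borel partition $\Omega=\sqcup_{j\ge1}(\se n^k)^{-1}(\{j\})$, and the associated countable Borel gluing $\se{x^{\se n^k}}$ of the family $\{\se{x^n}\}$ satisfies $\om d(\om{x^{\se n^k}},\om y)\le 1/k$ for every $\omega$. Therefore $\lim_{k\to\infty}\se{x^{\se n^k}}=\se y$ pointwise, exhibiting $\se y$ as a pointwise limit of countable Borel gluings of elements of $\D$, so $\se y\in\mathcal{B}_2$.

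I expect no serious obstacle here, since the heavy lifting is done by Lemma \ref{LemmaBorelGluingPoncutalLimit}; the only point requiring a little care is checking that the minimum defining $\om n^k$ is attained for every $\omega$ (density of $\om\D$) and that the resulting gluing is taken over a genuine Borel partition of $\Omega$, which follows from the Borel measurability of each $\se n^k$.
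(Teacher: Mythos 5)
Your proposal is correct, and the cyclic chain $\callo X\subseteq\mathcal{B}_1\subseteq\mathcal{B}_2\subseteq\callo X$ is a legitimate way to get both equalities at once; the key observation making it work is the one you implicitly use, namely that the construction of the functions $\se n^k$ in the proof of Lemma \ref{LemmaBorelGluingPoncutalLimit} only ever evaluates distances from $\se y$ to elements of $\D$, so it applies verbatim under the weaker hypothesis $\se y\in\mathcal{B}_1$. The paper takes a slightly different route for the first equality: for $\se y$ in the right-hand set it writes, for every $\se x\in\callo X$,
$$\se d(\se x,\se y)=\sup_{\se z\in\D}\left|\se d(\se x,\se z)-\se d(\se z,\se y)\right|,$$
an identity valid fiberwise because $\om\D$ is dense in $\om X$; this exhibits $\se d(\se x,\se y)$ as a countable supremum of Borel functions, and maximality (Definition \ref{DefiBorelField}(ii)) then gives $\se y\in\callo X$ directly. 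For the second equality the paper simply points back to the proof of Lemma \ref{LemmaBorelGluingPoncutalLimit}. So the difference is essentially which of the two equivalent formulations of maximality you invoke: the paper uses (ii) together with the supremum identity, while you use (ii)$'$ together with the approximating gluings. Your version has the merit of making explicit that the $\se n^k$ construction needs only the fundamental family, and of handling both equalities in one pass; the paper's version isolates the first equality as a short self-contained metric argument that does not revisit the gluing machinery.
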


\begin{proof}
Let's prove the first equality. The inclusion $[\subseteq]$ is obvious. For the reverse, suppose that $\se y$ is in the right-hand set. Since $\D$ is a fundamental family the equality
$$\se d(\se x,\se y)=\sup_{\se z\in\D}|\se d(\se x,\se z)-\se d(\se z,\se y)|$$
holds for every $\se x\in\callo X$. Therefore $\se d(\se x,\se y)$ is a Borel function and therefore $\se y\in\callo X$. Note that the second equality was already verified in the proof of the Lemma \ref{LemmaBorelGluingPoncutalLimit}.
\end{proof}

\begin{exemps}\label{ExampBorelField}\quad
\smallskip

\noindent (1) As already said a trivial field is a natural example of a Borel field of metric spaces. It is important to keep in mind that, even in the trivial case, many different Borel structures may exist on the same field.
\smallskip

\noindent (2) A standard bundle (in the sense of \textsc{Gaboriau} and \textsc{Alvarez}, see \textit{e.g.} \cite{Alv08}) is a standard Borel space $X$ with a Borel projection $\pi: X\rightarrow\Omega$ such that fibers are countable. The field $\{(\pi^{-1}(\omega),\om d)\}_{\omega\in\Omega}$ where $\om d$ is the discrete distance is a Borel field when endowed with the structure $\{\se f:\Omega\tend X\mid \se f \text{ is Borel and }\pi(\om f)=\omega\}$. (A selection theorem can be used to construct a fundamental family, see \cite{Alv08}.)
\smallskip

\noindent (3) A Borel equivalence relation on $\Omega$ is a particular example of a standard Bundle. This example can be turned into in a more interesting one if the relation is graphed, so that we can consider on each equivalence class the metric induced by the graph structure instead of the discrete one (see \textit{e.g.} \cite{Gab00} for a definition of a graphed equivalence relation).
\smallskip

\noindent (4) A Borel field of Hilbert space as defined in \cite{Dix69} or a Borel field of Banach spaces as defined in \cite{AR00} are examples of Borel field of metric spaces.
\smallskip

\noindent (5) Suppose that there exists a countable family $\D=\{\se x^n\}_{n\ge1}\subseteq\calSe X$ such that $\se d(\se x^n,\se x^m)$ is Borel for every $n,m\ge1$ and $\{\om x^n\}_{n\ge1}$ is dense in $\om X$ for every $\omega\in\Omega$. Then it's easy to adapt the proof of Lemma \ref{LemmaBorelStructureFundamentalFamily} to show that
$$\mathcal L_{\D}(\Omega,\se X):=\{\se y\in\calSe X\mid \se d(\se x,\se z)\text{ is Borel for every }\se z\in\D\}$$
is a Borel structure on $(\Omega,\se X)$.
\end{exemps}

\begin{defi}
Let $(\Omega,{\cal A})$ be a Borel space, $\ch X$ and $\ch Y$ be two Borel fields of respective Borel structures
$\callo X$ and $\callo Y$.

\noindent A morphism between this two Borel fields is a family of applications $\se\varphi=\{\om\varphi:\om X\tend\om
Y\}_{\omega\in\Omega}$ such that for all $\se x\in\callo X$, the section defined by $\se\varphi(\se x):\omega\mapsto\om\varphi(\om x)$ is in $\callo Y$. Sometimes we will simply write $\se\varphi:(\Omega,\se Y)\tend(\Omega,\se Y)$ or $\se\varphi:\se X\tend\se Y$. We denote by $\widetilde{\mathcal{L}}\big(\Omega,\F(\se X,\se Y)\big)$ the set of morphisms from $\ch X$ to $\ch Y$.

\noindent A morphism $\se\varphi$ is called continuous (isometric, injective, surjective, bijective) if $\om\varphi$ is continuous (isometric, injective, surjective, bijective) for every $\omega\in\Omega$. We'll write $\widetilde{\mathcal{L}}\big(\Omega,\C(\se X,\se Y)\big)$ for the set of continuous morphisms and $\widetilde{\mathcal{L}}\big(\Omega,\mathcal{I}(\se X,\se Y)\big)$ for the set of
isometric ones.

\noindent A continuous morphism $\se\ph\in\widetilde{\mathcal{L}}\big(\Omega,\C(\se X,\se Y)\big)$ is invertible if $\om\ph$ is an homeomorphism for every $\omega\in\Omega$ and if $\se\ph^{-1}$ (defined in the obvious way) is also a morphism.
\end{defi}

\begin{rems}\quad
\smallskip

\noindent (1) Obviously we could define a morphism for fields with different bases, but it won't be relevant in our context.
\smallskip

\noindent (2) To verify that a family of continuous applications $\{\om\varphi:\om X\rightarrow\om Y\}$ is a morphism, it's enough to check that $\se\ph(\D)\subseteq\callo Y$ for a fundamental family $\D$ of $\callo X$. Indeed it's an easy consequence of the Lemma \ref{LemmaBorelStructureFundamentalFamily}. In the same spirit, we can verify that $\se\ph\in\widetilde{\mathcal{L}}\big(\Omega,\C(\se X,\se Y)\big)$ is invertible if and only if $\om\varphi$ is an homeomorphism for every $\omega\in\Omega$.
\end{rems}

\noindent Suppose now that we choose for every $\omega\in\Omega$ a subset (possibly empty) $\om A$ of $\om X$. Such a choice is called a \textit{subfield} and we would like to define when it is Borel. A natural way of doing so is to suppose that $\Omega':=\omdansom{\om A\nonvide}$ is a Borel subset of $\Omega$ and that $\mathcal L(\Omega',\se A):=\mathcal L(\Omega',\se X)\cap\mathcal S(\Omega',\se A)$ is a Borel structure on the field\footnote{Observe that $(\Omega,\se A)$ is \textit{not} a field of metric spaces !} $(\Omega',\se A)$. As $\mathcal L(\Omega',\se X)\cap\mathcal S(\Omega',\se A)$ is closed under Borel gluings and pointwise limits\footnote{In this context we think about $(\Omega',\se A)$ as a field, so that we are only interested in pointwise limits that are in $\mathcal S(\Omega',\se A)$. It doesn't mean that the set we consider is closed in $\mathcal S(\Omega',\se X)$ or in $\mathcal L(\Omega',\se X)$}, and as condition (i) is obviously satisfied, Lemma \ref{LemmaBorelGluingPoncutalLimit} naturally leads to the following definition.

\begin{defi}\label{DefiBorelSubfield}
Let $(\Omega,\A)$ be a Borel space and $\ch X$ a Borel field of metric spaces. A subfield $(\Omega,\se A)$ is called Borel if
\begin{enumerate}
\item[(i)] $\Omega':=\omdansom{\om A\nonvide}\in\A$.
\item[(ii)] There exists a countable family of sections $\D'=\{\se y^n\}_{n\ge1}\subseteq\mathcal L(\Omega',\se A)$ such that $\om A\subseteq\overline{\{\om y^n\}}_{n\ge1}$ for every $\omega\in\Omega'$.
\end{enumerate}
The set $\Omega'$ is called the base of the subfield and $\D'$ is called a fundamental family of the subfield.
\end{defi}

\begin{rems}\label{RemFieldBetweenAndUnion}\text{}
\smallskip

\noindent(1) $(\Omega',\se A)$ is a Borel field of metric spaces.
\smallskip

\noindent (2) In the condition (ii) of the previous definition, the closure $\overline{\{\om y^n\}}_{n\ge1}$ is taken in $\om X$, that's why we used $\subseteq$ and not an equality. An obvious way to construct a Borel subfield is to take a countable family $\{\se x^n\}_{n\ge1}\subseteq\callo X$ and to choose a subfield $\se A$ such that $\{\om x^n\}_{n\ge1}\subseteq\om A\subseteq\overline{\{\om x^n\}}_{n\ge1}$ for every $\omega\in\Omega$. In particular a Borel section is an obvious example of a Borel subfield.
\smallskip

\noindent (3) The previous construction can be generalized in the following way : if $\se A$ is a Borel subfield of $\se X$ and $\se B$ is a subfield such that $\om A\subseteq\om B\subseteq\overline{\om A}$ for every $\omega\in\Omega$, then $\se B$ is also a Borel subfield of $\se X$.
\smallskip

\noindent (4) Suppose that $\{\se A^n\}_{n\ge1}$ is a family of Borel subfields. Then the subfield $\se A:=\cup_{n\ge1}\se A^n$, defined by $\om A:=\cup_{n\ge1}\om A^n$, is also a Borel subfield. This can be shown in three steps. First we observe that the base $\Omega'$ of $\se A$ is $\cup_{n\ge1}\Omega_n\in\A$ where $\Omega_n$ denotes the base of $\se A^n$. Then we construct a section $\se z\in\mathcal L(\Omega', \se A)$ : pick sections $\se z^1\in\mathcal L(\Omega_1,\se A^1)$, $\se z^2\in\mathcal L(\Omega_2\setminus\Omega_1,\se A^2)$, $\se z^3\in\mathcal L(\Omega_3\setminus(\Omega_1\cup\Omega_2), \se A^3)$ and so on; gluing them together gives the desired section. Finally we choose, for each $n\ge1$, a fundamental family $\D^n$ of $\se A^n$ and we modify each of its elements by gluing it with $\se z\mid_{\Omega'\setminus\Omega_n}$ to obtain a subset $\widetilde\D^n$ of $\mathcal L(\Omega',\se A)$. By construction $\D':=\cup_{n\ge1}\widetilde\D^n$ is a fundamental family of $\se A$.
\smallskip

\noindent (5) Observe the following obvious property of transitivity for Borel subfields. Let $(\Omega,\A)$ be a Borel space, $\ch X$ be a Borel field of metric spaces, $(\Omega,\se A)$ be a Borel subfield of $\ch X$ of base $\Omega'$ and $\ch B$ be a subfield of $\ch X$ such that $\om B\subseteq\om A$ for all $\omega\in\Omega$. Then $\ch B$ is a Borel subfield of $\ch X$ if and only if $(\Omega',\se B)$ is a Borel subfield of $(\Omega',\se A)$.
\end{rems}

\subsection{Equivalence Classes}

\noindent Suppose now that a Borel probability measure $\mu$ on $(\Omega,\A)$ is given. In this context we can define the equivalence relation of equality almost everywhere on the set of Borel sections and on the set of Borel subfields. Two sections (or two Borel subfields) are equal almost everywhere if the set where they differ is of measure 0. We write $\se x=_{\mu\text{-a.e.}}\se y$ (or $\se A=_{\mu\text{-a.e.}}\se B$) when two sections are equivalent and we denote by $[\se x]$ (or $[\se A]$) the equivalence class of $\se x$ (respectively of $\se A$). We write $L(\Omega,\se X)$ for the set of equivalence classes of Borel sections.
\smallskip

\subsection{Borel Subfields of Open Subsets}

\noindent In the case of a subfield such that all the subsets are open, there is an easy sufficient criterion to verify that it is Borel.

\begin{lem}\label{LemmaSubfieldOpenSubsets}
Let $(\Omega,\A)$ be a Borel space, $\ch X$ be a Borel field of metric spaces and $\se U$ be a subfield such that $\om U$ is open for every $\omega\in\Omega$. If
$\omdansom{\om x\in\om U}\in\A$ is Borel for all sections $\se x$ in a fundamental family of the Borel structure $\callo X$, then $\se U$ is a Borel subfield.
\end{lem}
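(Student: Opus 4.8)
The plan is to verify conditions (i) and (ii) of Definition \ref{DefiBorelSubfield} for the subfield $\se U$. Write $\D = \{\se x^n\}_{n \ge 1}$ for the given fundamental family of $\callo X$ and set $\Omega_n := \omdansom{\om x^n \in \om U}$, which is Borel by hypothesis.

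First I would check condition (i). Since each $\om U$ is open and $\om\D = \{\om x^n\}_{n\ge1}$ is dense in $\om X$, a point $\omega$ satisfies $\om U \nonvide$ if and only if $\om U$ contains some $\om x^n$; hence $\Omega' = \omdansom{\om U \nonvide} = \bigcup_{n\ge1}\Omega_n \in \A$.

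Next, condition (ii). The natural candidates for a fundamental family of the subfield are the sections $\se x^n$ themselves, restricted to where they actually land in $\se U$; but a single $\se x^n$ need not define a section over all of $\Omega'$ that lies in $\se A$, so I would build the family by gluing. Concretely, for each $n \ge 1$ consider the Borel partition $\Omega' = \bigsqcup_{k\ge1} \Omega_k'$ where $\Omega_1' := \Omega_1$ and $\Omega_k' := \Omega_k \setminus (\Omega_1 \cup \cdots \cup \Omega_{k-1})$, and on $\Omega'$ define $\se y^n$ by gluing: on $\Omega_k'$ with $k \le n$ put $\se y^n := \se x^n$ if $n \le k$... — more cleanly, fix once and for all a section $\se z \in \mathcal{L}(\Omega', \se A)$ by setting $\se z \mid_{\Omega_k'} := \se x^k \mid_{\Omega_k'}$ (legitimate since $\om x^k \in \om U$ for $\omega \in \Omega_k' \subseteq \Omega_k$, and this is a countable Borel gluing of elements of $\callo X$, hence in $\mathcal{L}(\Omega',\se X) \cap \mathcal{S}(\Omega',\se A) = \mathcal{L}(\Omega',\se A)$ by Lemma \ref{LemmaBorelGluingPoncutalLimit}). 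Then for each $n$ define $\se y^n \in \mathcal{S}(\Omega',\se A)$ by $\se y^n \mid_{\Omega_n} := \se x^n\mid_{\Omega_n}$ and $\se y^n \mid_{\Omega' \setminus \Omega_n} := \se z\mid_{\Omega'\setminus\Omega_n}$; this is again a (finite) Borel gluing of Borel sections taking values in $\se U$, so $\se y^n \in \mathcal{L}(\Omega',\se A)$.

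It remains to see that $\D' := \{\se y^n\}_{n\ge1}$ witnesses density, i.e. $\om U \subseteq \overline{\{\om y^n\}}_{n\ge1}$ for every $\omega \in \Omega'$. Fix $\omega \in \Omega'$ and $p \in \om U$; since $\om U$ is open, there is $\epsi > 0$ with $B(p,\epsi) \subseteq \om U$, and since $\om\D$ is dense in $\om X$ there is $n$ with $\om d(\om x^n, p) < \epsi$, whence $\om x^n \in \om U$, i.e. $\omega \in \Omega_n$, and therefore $\om y^n = \om x^n$ lies within $\epsi$ of $p$. Letting $\epsi \to 0$ gives $p \in \overline{\{\om y^n\}}_{n\ge1}$. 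This establishes (ii) and completes the proof. The only genuinely delicate point is the bookkeeping needed to turn the partially-defined sections $\se x^n$ into honest elements of $\mathcal{L}(\Omega',\se A)$ without destroying the density property — handled above by fixing the auxiliary background section $\se z$ and gluing — but this is routine given Lemma \ref{LemmaBorelGluingPoncutalLimit}; there is no serious obstacle.
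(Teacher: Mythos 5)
Your proof is correct and follows essentially the same route as the paper's: the paper introduces the singleton subfields supported on $\Omega_n:=\omdansom{\om x^n\in\om U}$ and then invokes Remarks \ref{RemFieldBetweenAndUnion} (3) and (4), whose proofs contain exactly the disjointification-and-gluing construction you carry out by hand to build the fundamental family $\{\se y^n\}_{n\ge1}$. (Only a notational slip: your $\se A$ should read $\se U$ throughout.)
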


\begin{proof}
Write $\D:=\{\se x^n\}_{n\ge1}$ a fundamental family and $\Omega_n:=\omdansom{\om x^n\in U_{\omega}}\in\A$. By density of $\{\om x^n\}_{n\ge1}$ and since $\om U$ is open for all $\omega\in\Omega$, we have that $\Omega':=\omdansom{\om
U\nonvide}=\cup_{n\ge1}\Omega_n$ is Borel. For all $n\ge1$, we can define a Borel subfield $(\Omega,\se A^n)$ by
$$\om A^n:=\begin{cases} \{\om x^n\} &\text{if }\omega\in\Omega_n\\
\emptyset&\text{else.}
\end{cases}$$
Then by construction $\cup_{n\ge1}\se A^n\subseteq\se U\subseteq\overline{\cup_{n\ge1}\se A^n}$ and so $\se U$ is a Borel subfield by the Remarks \ref{RemFieldBetweenAndUnion} (3) and (4).
\end{proof}

\begin{exemp}\label{ExampOpenBall}
If $\se x\in\callo X$ and $\se r$ is a Borel non-negative function, then the subfield of open balls $B(\se x,\se r)$ defined by assigning to each $\omega$ the set $B(\om x,\om r)$ is Borel. In fact if $\se y\in\callo X$ then
$$\omdansom{\om y\in B(\om x,\om r)}=\big(\se d(\se x,\se y)\big)^{-1}([0,\se r])\in\A.$$
The field $\overline{B(\se x,\se r)}$ of the closure of the open balls is also Borel because of the Remark \ref{RemFieldBetweenAndUnion} (3).
\end{exemp}

\subsection{Borel Subfields of Closed Subsets}

\noindent If every metric space $\om X$ is complete, then there is a sufficient and necessary criterion for a subfield of closed subsets to be Borel. We choose the convention that the distance from a point to the empty set is infinite.

\begin{prop}\label{PropDistanceBorel}
Let $(\Omega,\A)$ be a Borel space, $\ch X$ be a Borel field of complete metric spaces and $\se F$ be a subfield of closed subsets. Then the following assertions are equivalent.
\begin{itemize}
\item[(i)] $\se F$ is a Borel subfield,
\item[(ii)] $\se d(\se x,\se F):\Omega\tend\RR_+\cup\{\infty\}$, $\omega\mapsto\om d(\om x,\om F)$ is a Borel map for every $\se x\in\callo X$,
\item[(iii)] $\se d(\se x,\se F):\Omega\tend\RR_+\cup\{\infty\}$, $\omega\mapsto\om d(\om x,\om F)$ is a Borel map for every $\se x\in\D$ where $\D$ is a fundamental family of $\callo X$.
\end{itemize}
\end{prop}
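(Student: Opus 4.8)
The plan is to prove the cycle (i) $\Rightarrow$ (ii) $\Rightarrow$ (iii) $\Rightarrow$ (i), which is the natural order since (ii) $\Rightarrow$ (iii) is trivial (restriction of a quantifier to a subfamily) and the other two implications do the real work. For (i) $\Rightarrow$ (ii): let $\D'=\{\se y^n\}_{n\ge1}$ be a fundamental family of the Borel subfield $\se F$, so that $\om F\subseteq\overline{\{\om y^n\}}_{n\ge1}$ for all $\omega\in\Omega'$, and since each $\om y^n\in\om F$ and $\om F$ is closed we in fact get $\overline{\{\om y^n\}}_{n\ge1}=\om F$ on $\Omega'$. Then for any $\se x\in\callo X$ and any $\omega\in\Omega'$ we have $\om d(\om x,\om F)=\inf_{n\ge1}\om d(\om x,\om y^n)=\inf_{n\ge1}\se d(\se x,\se y^n)(\omega)$, a countable infimum of Borel functions on $\Omega'$, hence Borel; on $\Omega\setminus\Omega'$ the map is constantly $\infty$ by our convention, and $\Omega'\in\A$, so $\se d(\se x,\se F)$ is Borel on all of $\Omega$.

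For (iii) $\Rightarrow$ (i), which is the heart of the matter, first one recovers (ii) from (iii): for arbitrary $\se x\in\callo X$, Lemma \ref{LemmaBorelStructureFundamentalFamily} expresses $\se x$ as a pointwise limit of countable Borel gluings of elements of $\D$, and since $z\mapsto d(z,\om F)$ is $1$-Lipschitz in $z$, the map $\omega\mapsto\om d(\om x,\om F)$ is a pointwise limit of countable Borel gluings of the maps $\omega\mapsto\se d(\se z,\se F)(\omega)$ for $\se z\in\D$, hence Borel. Next, $\Omega'=\omdansom{\om F\nonvide}=\omdansom{\om d(\om x^1,\om F)<\infty}\in\A$ for any fixed $\se x^1\in\D$ (using that $\om X\ne\emptyset$, as it contains a dense countable set, so a closed set is empty iff it is at positive — indeed infinite — distance from every, equivalently from one, point of a dense family). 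Finally one must produce a fundamental family for $\se F$ on $\Omega'$. The construction mimics the proof of Lemma \ref{LemmaBorelGluingPoncutalLimit}: working over $\Omega'$, for each $k\ge1$ define $\om n^k:=\min\{n\ge1\mid \om d(\om x^n,\om F)\le \om d(\om x^n,\om x^n)+\cdots\}$ — more precisely, enumerate $\D=\{\se x^n\}$ and for each $m,k\ge1$ set $\Omega'_{m,k}:=\{\omega\in\Omega'\mid \se d(\se x^m,\se F)(\omega)<1/k\}\in\A$ and pick within it, by the usual minimal-index gluing, a section $\se z^{m,k}$ with $\om z^{m,k}\in\om X$ close to $\om F$: concretely choose $\om z^{m,k}$ to be $\om x^{n}$ for the least $n$ with $\om d(\om x^n,\om F)<1/k$ and $\om d(\om x^n,\om x^m)<\om d(\om x^m,\om F)+2/k$. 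Each such $\se z^{m,k}$ lies in $\mathcal L(\Omega'_{m,k},\se X)$ by Borelness of the relevant distance functions (here (ii), already established, is used), extend it arbitrarily by gluing to a section over $\Omega'$, and check that $\{\se z^{m,k}\}$ together with the witnessing construction gives, for each $\omega\in\Omega'$, a sequence dense in $\om F$: given $p\in\om F$ and $\epsi>0$, pick $\se x^m\in\D$ with $\om d(\om x^m,p)<\epsi/3$, then $\om d(\om x^m,\om F)<\epsi/3$, so for $k$ large $\om z^{m,k}$ is defined and within $\om d(\om x^m,\om F)+2/k<\epsi$ of $\om x^m$, i.e.\ within $2\epsi$ of $p$; since $\om z^{m,k}\in\om F$ as well (it is some $\om x^n$ at distance $<1/k$ from the \emph{closed} set $\om F$ — wait, that only gives distance $<1/k$, not membership, so one instead keeps the $\se z^{m,k}$ as points of $\om X$ approximating $\om F$ and notes $\om F\subseteq\overline{\{\om z^{m,k}(\omega)\}}$ holds by the density argument, while each $\se z^{m,k}$ need not lie in $\mathcal S(\Omega',\se F)$).

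The previous parenthetical exposes the one genuine subtlety, and I would handle it cleanly as follows: rather than forcing the approximating sections into $\se F$, observe that completeness of each $\om X$ lets us replace each $\om z^{m,k}$ by a genuine nearest-or-nearly-nearest point \emph{of} $\om F$ via a diagonal refinement — for each fixed $m$ run $k\to\infty$ and extract, using that $\{\om z^{m,k}(\omega)\}_k$ is Cauchy towards a point of the closed set $\om F$, a pointwise limit section $\se w^m\in\mathcal S(\Omega',\se F)$ with $\om w^m=\lim_k\om z^{m,k}(\omega)$, which is Borel (pointwise limit of elements of $\mathcal L(\Omega',\se X)$, hence in $\mathcal L(\Omega',\se X)$, and lands in $\se F$, so in $\mathcal L(\Omega',\se A)$ for $\se A=\se F$) and satisfies $\om d(\om x^m,\om w^m)=\om d(\om x^m,\om F)$; then $\{\se w^m\}_{m\ge1}$ is the sought fundamental family, since for $p\in\om F$ choosing $\om x^m$ near $p$ forces $\om w^m$ near $p$. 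Then $\Omega'\in\A$ and the existence of this fundamental family give condition (ii) of Definition \ref{DefiBorelSubfield}, so $\se F$ is a Borel subfield, completing (iii) $\Rightarrow$ (i). I expect the main obstacle to be exactly this construction of Borel \emph{nearest-point} (or approximately-nearest-point) sections landing inside $\se F$: one must Borel-select, uniformly in $\omega$, points realizing the distance to $\om F$, and the safe route is the two-step minimal-index gluing followed by a Cauchy/pointwise-limit argument leaning on completeness of the fibers and on Lemma \ref{LemmaBorelGluingPoncutalLimit}; everything else (the Lipschitz estimates, the countable infima, the Borelness of $\Omega'$) is routine.
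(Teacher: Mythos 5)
Your overall strategy --- a direct Castaing-type selection argument in place of the paper's reduction to the trivial-field case --- is legitimate, and your treatment of (i) $\Rightarrow$ (ii), of (ii) $\Rightarrow$ (iii), of the recovery of (ii) from (iii) via the $1$-Lipschitz property of $d(\cdot,\om F)$, and of the Borelness of $\Omega'$ is fine. But the key step, producing a fundamental family of sections landing \emph{inside} $\se F$, has a genuine gap exactly at the point you flag. The sequence $\{\om z^{m,k}\}_{k\ge1}$ you build is \emph{not} Cauchy: the only constraints on $\om z^{m,k}$ are that it lies within $1/k$ of $\om F$ and within $\om d(\om x^m,\om F)+2/k$ of $\om x^m$, and two consecutive terms satisfying these constraints can sit near two almost-minimizers of $d(\om x^m,\cdot)$ on $\om F$ that are far apart (take $\om F$ a sphere and $\om x^m$ its centre: the minimal-index selection may jump all over the sphere as $k$ grows). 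So the pointwise limit $\se w^m$ need not exist. Worse, the property you want for the limit, $\om d(\om x^m,\om w^m)=\om d(\om x^m,\om F)$, is unattainable in general: the fibers are only assumed complete, not proper, and in a complete space the distance to a closed set need not be realized (e.g.\ $F=\{(1+\tfrac1n)e_n\}_{n\ge1}$ in $\ell^2$ and $x=0$). ``Extracting'' a convergent subsequence is also unavailable, both because closed balls need not be compact and because the subsequence would have to work simultaneously for all $\omega$.

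The repair is the standard successive-approximation scheme behind the Kuratowski--Ryll-Nardzewski/Castaing selection theorem: fix $m$ and a precision $\epsi>0$, restrict to the Borel set $\omdansom{\om d(\om x^m,\om F)<\epsi}$, and build recursively sections $\se z^k$ (countable Borel gluings of elements of $\D$, Borel thanks to the already-established (ii) and the compatibility condition) satisfying \emph{both} $\om d(\om z^{k},\om F)<\epsi\,2^{-k}$ \emph{and} $\om d(\om z^{k+1},\om z^{k})<\om d(\om z^{k},\om F)+\epsi\,2^{-k}$; the second condition is the one your construction omits, and it is what makes the sequence Cauchy, so that completeness of the fiber and closedness of $\om F$ yield a limit section in $\mathcal L(\Omega',\se X)\cap\mathcal S(\Omega',\se F)$ at distance $O(\epsi)$ from $\om x^m$. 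Letting $m$ and $\epsi=1/j$ range over countable sets then gives the fundamental family. For comparison, the paper does not carry out any such selection: it quotes the Castaing--Valadier theorem for trivial fields and transports it to the general case via Valadier's trivialization theorem (an isometric Borel embedding into the constant field on the Urysohn space), using completeness and the transitivity of Borel subfields. Your route, once the recursion is corrected, yields a self-contained proof, which is a real gain; as written, however, the Cauchy claim and the nearest-point claim are both false.
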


\begin{proof}[On the proof]
[(iii) $\Rightarrow$ (ii)] Since the distance to a set is a continuous function, this assertion is a consequence of the Lemma \ref{LemmaBorelStructureFundamentalFamily}.
\smallskip

\noindent The implication [(iii) $\iff$ (i)] was proved in the particular case of trivial fields by \textsc{Castaing} and \textsc{Valadier} \cite{CV77}. Notice that $\omdansom{\om F\nonvide}=\big(\se d(\se x,\se F)\big)^{-1}(\RR_+)$ for any $\se x\in\callo X$ so that we can suppose without lost of generality that this set is equal to $\Omega$. The trivialization theorem due to \textsc{Valadier} (see \cite{Val78} and the remark below), the implication [(ii) $\iff$ (iii)], the completeness assumption and the property of transitivity of Borel subfields (\textit{cf.} Remark \ref{RemFieldBetweenAndUnion} (5)) can therefore be combined to extend the result to the general case.
\end{proof}

\begin{rem}
The trivialization theorem due to \textsc{Valadier} states that for every Borel field of metric spaces $(\Omega,\se X)$, there exists an isometric morphism $\se\ph:(\Omega,\se X)\rightarrow(\Omega,\UU)$ where $\UU$ is the universal separable metric space constructed by \textsc{Uryshon} \cite{Ury27}. In his paper \textsc{Valadier} checks that the isometric embedding of a separable metric space $X$ in $\UU$ can be done in a Borel way.
\end{rem}

\begin{exemp}
Suppose that every $\om X$ is geodesic. If $\se x$ is a Borel section and $\se r:\Omega\tend\RR_+$ is a Borel function, then the field of closed balls $\overline B(\se x,\se r)$ and the one of spheres $S(\se x,\se r)$ defined similarly as in the Example \ref{ExampOpenBall} are Borel. Indeed if $\se y\in\callo X$, then
$$\se d(\se y,\overline B(\se x,\se r))=\lceil\se d(\se y,\se x)-\se r\rceil^0\text{ et }\se d(\se y,S(\se x,\se r))=|\se d(\se y,\se x)-\se r|$$
where if $\alpha\in\RR$, then $$\lceil \alpha\rceil^0=\begin{cases}\alpha&\text{if }\alpha\ge0\\0&\text{if }\alpha<0.\end{cases}$$
\end{exemp}

\noindent In the measure case, we can show that the set of equivalence classes of a Borel subfield of closed sets can be turned into a complete lattice (\textit{i.e.} every subset has an \textit{infimum} and a \textit{supremum}) when it is endowed with the following order : if $\se F$ and $\se G$ are Borel subfields of closed sets, then $[\se F]\le[\se G]$ if $\om F\subseteq \om G$ for almost every $\omega\in\Omega$.

\noindent We'll need the following proposition which can be deduced from \cite{Him75} (Theorem 3.5 and the explanation at the beginning of the Section 4) as the Proposition \ref{PropDistanceBorel} has been deduced from \cite{CV77}.

\begin{prop}\label{PropIntersectionClosedField}
Let $(\Omega,\A,\mu)$ be a standard probability space and $\se X$ be a Borel field of complete metric spaces. Let $\{\se F^n\}_{n\ge1}$ be a family of Borel subfields of closed subsets. Then there exists a Borel subset $\Omega_0$ of measure one such that if $\cap_{n\ge1}\se F^n$ is defined by assigning to each $\omega$ the set $\big(\cap_{n\ge1}\se F^n\big)_\omega:=\cap_{n\ge1}\om F^n $, then $(\Omega_0,\cap_{n\ge1}\se F^n)$ is a Borel subfield of $(\Omega_0,\se X)$.
\end{prop}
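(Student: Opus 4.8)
The plan is to reduce Proposition~\ref{PropIntersectionClosedField} to the case of a trivial field by invoking Valadier's trivialization theorem (as quoted in the remark after Proposition~\ref{PropDistanceBorel}), and then quote the corresponding measurable-selection result of Himmelberg \cite{Him75} for multifunctions with values in the closed subsets of a fixed complete separable metric space. Concretely, one should first recall that a Borel subfield $\se F$ of closed subsets, restricted to its base, is by Proposition~\ref{PropDistanceBorel} characterized by Borel measurability of the distance functions $\omega\mapsto\disto{x}{F}$ for $\se x$ in a fundamental family; equivalently (after embedding $\se X$ isometrically into the constant field $(\Omega,\UU)$ via Valadier), each $\om F$ becomes a closed subset of the Urysohn space $\UU$ and $\omega\mapsto\om F$ is a measurable multifunction in the sense of Himmelberg. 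The only subtlety with the bases is that the base of $\cap_{n\ge1}\se F^n$ is the set where $\cap_{n\ge1}\om F^n\neq\emptyset$, which need not coincide with $\cap_{n\ge1}\Omega_n$; this is why we are allowed to throw away a null set $\Omega\setminus\Omega_0$.

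The key steps, in order, are: (1) embed $(\Omega,\se X)$ isometrically into $(\Omega,\UU)$, so all the $\om F^n$ become closed subsets of a single complete separable space; (2) observe via Proposition~\ref{PropDistanceBorel} and the embedding that each $n\mapsto(\omega\mapsto\om F^n)$ is a measurable closed-valued multifunction on $\Omega$; (3) apply Himmelberg's Theorem~3.5 (and the discussion at the start of his Section~4) to the countable family $\{\om F^n\}_{n\ge1}$ to conclude that the intersection multifunction $\omega\mapsto\cap_{n\ge1}\om F^n$ is measurable on the (measurable) set where it is nonempty, \emph{up to} a measure-zero modification — the null set is exactly the price paid for passing from the Himmelberg hypotheses (which are typically stated for complete $\sigma$-finite measure spaces, or require some regularity) to our standard probability space, and it is what produces $\Omega_0$; (4) translate back: the measurable intersection multifunction, together with a Castaing representation (a countable family of measurable selections that is pointwise dense), gives exactly a fundamental family for $(\Omega_0,\cap_{n\ge1}\se F^n)$ in the sense of Definition~\ref{DefiBorelSubfield}, after restricting the selections back into $\se X$ through the Valadier embedding and using transitivity of Borel subfields (Remark~\ref{RemFieldBetweenAndUnion}~(5)), since each $\om F^n\subseteq\om X$ so the intersection lands in $\om X$.

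The main obstacle I expect is \textbf{step (3)}, namely correctly matching the hypotheses of Himmelberg's intersection theorem to our setting and isolating precisely why a null set must be discarded. Himmelberg's results are phrased for measurable multifunctions into a complete separable metric space over a measurable space, often with a completeness-of-the-$\sigma$-algebra assumption or a hypothesis that the multifunctions have closed (or complete) values and the space is Polish; here $\se X$ is a field of \emph{complete} metric spaces but the $\om F^n$ need only be closed in $\om X$, not complete as standalone spaces, so one must be a little careful that closedness in the fixed ambient $\UU$ (equivalently, completeness, since $\UU$ is complete and closed subsets of complete spaces are complete) is what is actually used — which is exactly why embedding into $\UU$ first is essential rather than cosmetic. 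Once the ambient space is fixed and complete, the intersection of a countable family of measurable closed-valued multifunctions is measurable on $\{\omega\mid\cap_n\om F^n\neq\emptyset\}$, and the passage to a set of full measure $\Omega_0$ absorbs any discrepancy between ``measurable'' in Himmelberg's sense and the graph/section formulations used here; everything else (constructing the fundamental family from a Castaing representation, checking $\Omega_0\in\A$, verifying Definition~\ref{DefiBorelSubfield}~(i)--(ii)) is routine and parallels the deduction of Proposition~\ref{PropDistanceBorel} from \cite{CV77}.
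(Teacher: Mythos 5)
Your proposal is correct and follows essentially the same route as the paper: the paper gives no detailed argument, stating only that the proposition ``can be deduced from \cite{Him75} (Theorem 3.5 and the explanation at the beginning of the Section 4) as the Proposition \ref{PropDistanceBorel} has been deduced from \cite{CV77}'', i.e.\ exactly your combination of Valadier's trivialization into $(\Omega,\UU)$, Himmelberg's intersection theorem, a Castaing representation, and transitivity of Borel subfields (Remark \ref{RemFieldBetweenAndUnion} (5)). Your discussion of why the null set $\Omega\setminus\Omega_0$ appears is a useful elaboration of a point the paper leaves implicit.
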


\begin{thm}\label{ThmBorelSubfieldCompleteLattice}
Let $(\Omega,\A,\mu)$ be a standard probability space and let $\se X$ be a Borel field of complete metric spaces. Then the set of equivalence classes of Borel subfield of closed subsets with the order of inclusion almost everywhere is a complete lattice.

\noindent More precisely, if $\{[\se F^\beta]\}_{\beta\in \B}$ is a family of equivalence classes of Borel subfields of closed subsets, then there exists a sequence of indices $\{\beta_n\}_{n\ge1}\subseteq\B$ such that
$$[\cap_{n\ge1}\se F^{^{\beta_n}}]\quad\text{and}\quad [\overline{\cup_{n\ge1}\se F^{^{\beta_n}}}]$$
are respectively the \textit{infimum} and the \textit{supremum} of the family $\{[\se F^\beta]\}_{\beta\in\B}$.
\end{thm}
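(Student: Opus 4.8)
The plan is to show that the poset of equivalence classes of Borel subfields of closed subsets admits arbitrary infima; the existence of arbitrary suprema then follows by the standard lattice-theoretic duality (a complete meet-semilattice with a top element is a complete lattice), the top element here being the class of the whole field $[\se X]$. So fix a family $\{[\se F^\beta]\}_{\beta\in\B}$ and set $\delta:=\infess_{\{\beta_n\}}\mu\big(\{\omega\mid(\cap_n\se F^{\beta_n})_\omega\nonvide\}\big)$, the infimum taken over all countable subfamilies $\{\beta_n\}_{n\ge1}\subseteq\B$. First I would pick a sequence of countable subfamilies realizing this infimum and merge them into a single countable subfamily $\{\beta_n\}_{n\ge1}$; by Proposition \ref{PropIntersectionClosedField} applied to this sequence there is a conull $\Omega_0$ on which $\se G:=\cap_{n\ge1}\se F^{\beta_n}$ is a Borel subfield, and by construction $\mu(\{\omega\mid\om G\nonvide\})=\delta$.

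The key claim is that $[\se G]$ is the infimum of the family. It is clearly a lower bound: for each $\beta$, the subfamily $\{\beta_n\}\cup\{\beta\}$ has at most the same measure of non-emptiness by minimality of $\delta$, hence $\om G\cap\om F^\beta$ is non-empty for almost every $\omega$ with $\om G\nonvide$, which forces $\om G\subseteq\om F^\beta$ almost everywhere (here I use that $\om G$, being an intersection of the $\om F^{\beta_n}$, is contained in $\om F^\beta$ wherever the larger intersection $\om G\cap\om F^\beta$ equals $\om G$; more carefully, one argues $\om G\not\subseteq\om F^\beta$ on a positive measure set would let us strictly decrease $\delta$ by adjoining $\beta$, using Proposition \ref{PropIntersectionClosedField} again and the lattice order). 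For the greatest-lower-bound property, let $[\se H]$ be any lower bound, so $\om H\subseteq\om F^{\beta_n}$ for a.e.\ $\omega$, for every $n$; intersecting over $n$ gives $\om H\subseteq\om G$ for a.e.\ $\omega$, i.e.\ $[\se H]\le[\se G]$. Finally, to identify the supremum of the original family with $[\overline{\cup_{n\ge1}\se F^{\beta_n}}]$ for a suitable sequence: apply the infimum construction to the family of all upper bounds $\{[\se K]\}$ of $\{[\se F^\beta]\}$ (this family is nonempty since $[\se X]$ is an upper bound), obtaining its infimum $[\se L]$; one checks $[\se L]$ is still an upper bound of every $[\se F^\beta]$ (a countable intersection of upper bounds is an upper bound, using Proposition \ref{PropIntersectionClosedField}), hence it is the least upper bound, and by the construction $[\se L]=[\cap_n\se K^n]$ for some sequence of upper bounds; a separate small argument rewrites this as the closure of a countable union of the $\se F^\beta$ themselves, since the least upper bound of $\{\se F^{\beta_n}\}$ for any countable subfamily is exactly $[\overline{\cup_n\se F^{\beta_n}}]$ and one can extract the optimizing sequence of $\beta$'s.

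I would organize the proof around three lemmas: (a) \emph{stability}: a countable intersection of Borel subfields of closed subsets is again one, off a null set (this is Proposition \ref{PropIntersectionClosedField}, already available); (b) \emph{realization of the extremal measure}: the infimum over countable subfamilies of $\mu(\{\om{(\cap)}\nonvide\})$ is attained by a single countable subfamily, proved by a diagonal merge; (c) \emph{the attained intersection is the meet}, proved by the minimality argument above. The dual statements for joins follow by replacing ``intersection/non-empty/minimal'' with ``closure of union/proper containment/maximal'', or more cleanly by the meet-semilattice-to-lattice principle.

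\textbf{Main obstacle.} The delicate point is step (c), specifically verifying that if $\om G\subseteq\om F^\beta$ fails on a set of positive measure, then adjoining $\beta$ to the sequence strictly decreases the non-emptiness measure $\delta$ — one must rule out the possibility that $\om G\cap\om F^\beta$ is non-empty yet a proper subset of $\om G$ on that set, which does not by itself change $\mu(\{\om{(\cap)}\nonvide\})$. The resolution is that the relevant quantity to minimize is not a single scalar $\delta$ but the equivalence class $[\cap_n\se F^{\beta_n}]$ itself within the complete lattice one is trying to build — a mild circularity that is broken by instead minimizing over the already-well-defined scalar $\mu(\{\om{(\cap)}\nonvide\})$ first, and then, among countable subfamilies achieving that scalar, taking a further countable ``infimum'' inside the $\sigma$-complete Boolean-algebra-valued setting (essentially an exhaustion argument: iterate adjoining indices $\beta$ that shrink $\om G$ on a positive measure set, which is possible only countably often up to null sets since the measure of the symmetric difference strictly drops each time by a definite amount along a suitably chosen subsequence). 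Making this exhaustion argument precise — ensuring the transfinite process terminates at a countable stage — is where the real work lies; everything else is bookkeeping with Proposition \ref{PropIntersectionClosedField} and the definition of the a.e.\ order.
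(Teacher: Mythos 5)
There is a genuine gap, and it sits exactly where you locate it yourself: step (c), the verification that the countable intersection you extract is actually a lower bound for \emph{every} $[\se F^\beta]$. Minimizing the scalar $\mu(\{\omega\mid\om G\nonvide\})$ cannot do this job, since adjoining $\beta$ can shrink $\om G$ properly on a positive measure set without touching the non-emptiness locus. Your proposed repair is an exhaustion argument that terminates at a countable stage because ``the measure of the symmetric difference strictly drops each time by a definite amount'' --- but no such quantity has been defined for subfields of closed subsets of a general field of metric spaces, and producing one that (a) is a single bounded real number attached to a subfield, and (b) strictly decreases whenever the subfield shrinks properly on a set of positive measure, is precisely the missing content. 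This is not bookkeeping: without such a functional the transfinite process has no reason to stabilize after countably many steps, and the whole reduction to a countable subfamily collapses.

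The paper closes exactly this hole with one observation and one classical theorem. The observation is that for closed sets $F_1,F_2$ in a metric space with dense subset $D$, one has $F_2\subseteq F_1$ if and only if $d(x,F_2)\ge d(x,F_1)$ for all $x\in D$; applied fibrewise with $D$ a fundamental family $\{\se x^i\}_{i\ge1}$, this converts the a.e.\ inclusion order on subfields into a countable family of pointwise inequalities between the Borel functions $\se d(\se x^i,\se F^\beta)$. The classical theorem is the existence of the essential supremum (Theorem \ref{ThmSupEss}): for each $i$ the essential supremum of $\{\se d(\se x^i,\se F^\beta)\}_{\beta\in\B}$ is realized along a countable set of indices, and merging these countably many countable sets gives a single countable $\B'$ whose intersection $\se F=\cap_{\beta\in\B'}\se F^\beta$ satisfies $\se d(\se x^i,\se F)\ge\supess_\beta\se d(\se x^i,\se F^\beta)\ge_{\text{a.e.}}\se d(\se x^i,\se F^{\beta_0})$ for every $i$ and every $\beta_0$, hence $\om F\subseteq\om F^{\beta_0}$ a.e. If you want to salvage your exhaustion scheme, the functional you need is essentially $\Phi(\se F)=\sum_i 2^{-i}\int_\Omega\min\bigl(1,\om d(\om x^i,\om F)\bigr)\,d\mu(\omega)$, which strictly increases under proper shrinking by the same observation --- at which point you have reconstructed the paper's mechanism in a more laborious form. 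A secondary issue: your route to the supremum via the infimum of all upper bounds does not by itself produce the representation $[\overline{\cup_{n}\se F^{\beta_n}}]$ with $\beta_n$ drawn from the original index set $\B$, which the statement requires; the paper gets this for free by running the dual (essential infimum) argument directly on the family $\{\se d(\se x^i,\se F^\beta)\}_{\beta\in\B}$.
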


\noindent Before proving this theorem, we recall the notion of an essential supremum of a family of Borel functions, whose existence is guaranteed by the following theorem.

\begin{thm}[\cite{Doo94}, p. 71]\label{ThmSupEss}
Let $(\Omega,\A,\mu)$ be a standard probability space with $\mu$ $\sigma$-finite. Let $\{f_i : \Omega\rightarrow\mathbf{R} \cup\{\pm\infty\}\}_{i\in\I}$ be a family of Borel functions. Then there exists a Borel function $g:\Omega\rightarrow\RR\cup\{\pm\infty\}$ such that
\begin{enumerate}
\item[(i)] For all $i\in\I$, we have $g(\omega)\ge f_i(\omega)$ for almost every $\omega\in\Omega$.
\item[(ii)] If $h : \Omega\rightarrow\mathbf{R}\cup\{\pm\infty\}$ is a Borel function satisfying (i), then $h(\omega)\ge g(\omega)$ for almost every $\omega\in\Omega$.
\end{enumerate}
The function $g$ is uniquely determined up to null sets and all functions in its class satisfy (i) and (ii). Moreover there exists a countable family of elements of $I$ such that its \textit{supremum} satisfies (i) and (ii).

\noindent We call $g$ an essential \textit{supremum} of the family $\{f_i\}_{i\in I}$ and we write $g=\supess_{i\in I}\{f_i\}$.
\end{thm}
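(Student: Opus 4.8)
The plan is to reduce to uniformly bounded functions and then extract, by an integral-maximization argument, a single countable subfamily whose pointwise supremum already realizes the essential supremum almost everywhere. First I would fix a strictly increasing homeomorphism $\phi:\RR\cup\{\pm\infty\}\to[-1,1]$, for instance $\phi(x)=x/(1+|x|)$ on $\RR$ with $\phi(\pm\infty)=\pm1$, and replace each $f_i$ by $\tilde f_i:=\phi\circ f_i$. Since $\phi$ is a Borel order isomorphism that commutes with countable suprema and preserves almost-everywhere inequalities, it suffices to produce an essential supremum $\tilde g$ of the family $\{\tilde f_i\}_{i\in\I}$ with values in $[-1,1]$; the function $g:=\phi^{-1}\circ\tilde g$ will then be the desired essential supremum of $\{f_i\}_{i\in\I}$, and the characterizing properties (i) and (ii) transfer verbatim.

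For a countable subset $J\subseteq\I$, the pointwise supremum $\tilde f_J:=\sup_{i\in J}\tilde f_i$ is again Borel, being a countable supremum of Borel functions, and takes values in $[-1,1]$, so $\int_\Omega\tilde f_J\,d\mu$ is finite because $\mu$ is finite (for a merely $\sigma$-finite $\mu$ one would instead integrate against a fixed strictly positive $\mu$-integrable weight $w$, which exists by $\sigma$-finiteness). Set $\alpha:=\sup\{\int_\Omega\tilde f_J\,d\mu\mid J\subseteq\I\text{ countable}\}$, a finite real number. Choosing countable families $J_n$ with $\int_\Omega\tilde f_{J_n}\,d\mu\to\alpha$ and putting $J_\infty:=\bigcup_{n\ge1}J_n$ produces a countable family for which $\tilde f_{J_\infty}\ge\tilde f_{J_n}$ for every $n$, whence $\int_\Omega\tilde f_{J_\infty}\,d\mu=\alpha$. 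I would then define $\tilde g:=\tilde f_{J_\infty}=\sup_{i\in J_\infty}\tilde f_i$; this $J_\infty$ is exactly the countable subfamily asserted in the final clause of the statement.

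It remains to verify the two properties. For (i), suppose toward a contradiction that $\mu(\{\tilde g<\tilde f_{i_0}\})>0$ for some $i_0\in\I$; then $J_\infty\cup\{i_0\}$ is countable, satisfies $\tilde f_{J_\infty\cup\{i_0\}}=\max(\tilde g,\tilde f_{i_0})\ge\tilde g$ with strict inequality on a set of positive measure, and hence $\int_\Omega\tilde f_{J_\infty\cup\{i_0\}}\,d\mu>\alpha$, contradicting the maximality of $\alpha$; therefore $\tilde g\ge\tilde f_i$ almost everywhere for every $i\in\I$. For (ii), if $h$ is Borel and dominates each $\tilde f_i$ almost everywhere, then since $J_\infty$ is countable there is a single null set outside of which $h\ge\tilde f_i$ holds simultaneously for all $i\in J_\infty$, so $h\ge\sup_{i\in J_\infty}\tilde f_i=\tilde g$ almost everywhere; uniqueness up to null sets then follows by applying (ii) to any two candidates to obtain mutual domination. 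The sole genuine obstacle is that $\I$ may be uncountable, so that neither is $\sup_{i\in\I}\tilde f_i$ a priori Borel, nor can the almost-everywhere inequalities of (i) be imposed simultaneously over all of $\I$; the integral-maximization step is precisely what isolates a countable subfamily $J_\infty$ saturated enough to reproduce the full supremum almost everywhere, and this is the crux of the argument. Transporting $\tilde g$ back through $\phi^{-1}$ completes the proof.
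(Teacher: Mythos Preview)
The paper does not actually prove this theorem: it is quoted from \cite{Doo94} and used as a black box in the proof of Theorem~\ref{ThmBorelSubfieldCompleteLattice}. Your argument is correct and is essentially the classical one (reduce to a bounded family, then maximize the integral of countable suprema to extract a saturated countable subfamily); the handling of the $\sigma$-finite case via a strictly positive integrable weight is the right fix, and the verification of (i), (ii), and uniqueness is sound.
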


\begin{proof}[Proof of the Theorem \ref{ThmBorelSubfieldCompleteLattice}]
First observe the following criterion for a closed subset to be included in another one. If $X$ is a metric space, $D\subseteq X$ is a dense subset and $F_1, F_2\subseteq X$ are two closed subsets, then
\begin{equation}\label{ObsInclusion}
F_2\subseteq F_1\iff d(x, F_2)\ge d(x, F_1)\ \text{for all}\ x\in D.
\end{equation}
Now let $\{\se F^\beta\}_{\beta\in\B}$ be a family of Borel subfields of closed subsets and fix $\D=\{\se x^i\}_{i\ge1}$ a fundamental family of the Borel structure $\callo X$. By Theorem \ref{ThmSupEss} there exists, for each $i\ge1$, a sequence of indices $\{\beta_n^i\}_{n\ge1}$ such that $\sup_{n\ge1}\se d(\se x^i,\se F^{\beta^i_n})$ is an essential \textit{supremum} of the family of functions $\{\se d(\se x^i,\se F^{\beta})\}_{\beta\in\B}$. If we set $\B':=\{\beta_n^i\}_{n,i\ge1}$ then we can simultaneously construct an essential \textit{supremum} for each family $\{\se d(\se x^i,\se F^{\beta})\}_{\beta\in\B}$ by taking $\sup_{\beta\in\B'}\se d(\se x^i,\se F^\beta)$. By Proposition \ref{PropIntersectionClosedField}, there exists a Borel subfield $\se F$ of closed subsets such that $\om F=\cap_{\beta\in\B'}\om F^{\beta}$ for almost every $\omega\in\Omega$, and we'll show that it is the \textit{infimum} of the family $\{\se F^\beta\}_{\beta\in\B}$. Let $\beta_0\in\B$ be fixed. Then for every $i\ge1$ we have
$$\se d(\se x^i,\se F)=_{a.e.}\se d\big(\se x^i,\cap_{\beta\in\B'}\se F^{\beta}\big)\stackrel{(\ref{ObsInclusion})}{\ge}\sup_{\beta\in\B'}\se
d(\se x^i,\se F^\beta)\ge_{a.e.}\se d(\se x^i,\se F^{\beta_0}),$$ which shows, by the preliminary observation, that $\om F\subseteq\om F^{\beta_0}$ for almost every $\omega\in\Omega$. Thus $[\se F]$ is a minorant and it's obvious from its definition that it is the biggest one.

\noindent The same argument can be done for the \textit{supremum} by considering the essential \textit{infimum} of the families $\{\se d(\se x^i,\se F^{\beta})\}_{\beta\in\B}$ realized as the \textit{infimum} taken over a subset $\B''\subseteq\B$. Then $[\overline{\cup_{\beta\in\B''}F^{\beta}}]$ will be the \textit{supremum}. To have the exact formulation of the conclusion of the theorem we only have to order the countable set $\B'\cup\B''=\{\beta_n\}_{n\ge1}$.
\end{proof}

\subsection{Borel Fields of Proper Metric Spaces}

In all this section $\se X$ will denote a Borel field of proper metric spaces. We'll show that the field assigning to each $\omega$ the space of continuous functions on $\om X$ is a Borel field of metric spaces. To do so, we'll need the following lemma.

\begin{lem}\label{LemmaSpaceContinuousFunctionProperSpace}
Let $X$ be a proper metric space and $x_0$ a fixed base point. Then the following function is a metric on $\C(X)$ that induced the topology of uniform convergence on compact sets
$$\begin{array}{cccl}
\delta:&\C(X)\times\C(X)&\rightarrow&\RR\\
&(f,g)&\mapsto&\inf\{\epsi>0\mid \sup_{x\in B(x_0,1/\epsi)}|f(x)-g(x)|<\epsi\}.
\end{array}$$
Moreover if $D\subseteq X$ is a dense countable subset, then the $\QQ$-algebra generated by the functions $\{d_x\}_{x\in D}$ and the constant function $\mathbf 1$ is a dense countable subset of $\C(x)$.
\end{lem}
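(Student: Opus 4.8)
The plan is to verify the three claims in turn: that $\delta$ is a metric, that it induces the topology of uniform convergence on compact sets, and that the indicated countable $\QQ$-algebra is dense. For the first claim, symmetry is immediate from the symmetry of $|f(x)-g(x)|$, and $\delta(f,g)=0$ forces $f=g$ since $X=\bigcup_{n\ge1}B(x_0,n)$, so that $\delta(f,g)<1/n$ for all $n$ gives $\sup_{B(x_0,n)}|f-g|\le 1/n\to 0$. The only slightly delicate point is the triangle inequality: given $\epsi_1>\delta(f,g)$ and $\epsi_2>\delta(g,h)$, I would set $\epsi=\epsi_1+\epsi_2$, note $1/\epsi\le 1/\epsi_1$ and $1/\epsi\le 1/\epsi_2$ so that $B(x_0,1/\epsi)\subseteq B(x_0,1/\epsi_i)$, and conclude $\sup_{B(x_0,1/\epsi)}|f-h|\le \sup_{B(x_0,1/\epsi)}|f-g|+\sup_{B(x_0,1/\epsi)}|g-h|<\epsi_1+\epsi_2=\epsi$, whence $\delta(f,h)\le\epsi$; taking infima over $\epsi_1,\epsi_2$ gives $\delta(f,h)\le\delta(f,g)+\delta(g,h)$. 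One must only be a little careful with the non-strict versus strict inequalities in the definition of the infimum, which is handled by the usual $\epsi/2$-type slack.

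For the second claim, I would show the identity map is a homeomorphism between $(\C(X),\delta)$ and $\C(X)$ with the compact-open topology, using that $X$ is proper so that every compact set is contained in some closed ball $\overline B(x_0,R)$. If $\delta(f_k,f)\to 0$, then for fixed $R$ and any $\epsi<1/R$ with $\epsi>\delta(f_k,f)$ one gets $\sup_{B(x_0,1/\epsi)}|f_k-f|<\epsi$, and since $\overline B(x_0,R)\subseteq B(x_0,1/\epsi)$ when $1/\epsi>R$, uniform convergence on $\overline B(x_0,R)$ follows; conversely uniform convergence on all closed balls $\overline B(x_0,n)$ easily yields $\delta(f_k,f)\to 0$ by choosing $n$ large and then $k$ large. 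I expect this to be routine bookkeeping with radii.

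The substantive point is the density of the $\QQ$-algebra $\mathcal A$ generated by $\{d_x\}_{x\in D}$ and $\mathbf 1$. The functions $d_x(\cdot)=d(x,\cdot)$ are continuous and separate points of $X$ (if $y\ne y'$, pick $x\in D$ close to $y$, then $d_x(y)<d_x(y')$), so the $\RR$-algebra they generate together with $\mathbf 1$ is, by the locally compact version of the Stone--Weierstrass theorem, dense in $\C(X)$ for the topology of uniform convergence on compact sets — equivalently, dense in $(\C(X),\delta)$ by the second claim. Then one passes from the $\RR$-algebra to the $\QQ$-algebra $\mathcal A$ by approximating real coefficients by rationals uniformly on a fixed closed ball (again using properness), so $\mathcal A$ is dense; and $\mathcal A$ is countable since $D$ is countable and $\QQ$-polynomial expressions in countably many generators form a countable set. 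The main obstacle is simply invoking Stone--Weierstrass in the right (non-unital-completion, locally compact, uniform-on-compacta) form and then carefully descending to rational coefficients; everything else is elementary manipulation of balls in a proper space.
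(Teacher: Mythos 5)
Your proposal is correct and follows essentially the same route as the paper: the metric axioms are treated as a routine verification, and density is obtained by applying the Stone--Weierstrass theorem on each closed ball $\overline{B(x_0,R)}$ (equivalently, your ``locally compact'' version for uniform convergence on compacta), with the descent from real to rational coefficients handled by approximation on a fixed ball. Your explicit treatment of the $\QQ$-coefficient step and of the upward-closedness of the set defining the infimum fills in details the paper leaves to the reader.
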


\begin{proof}[On the proof]
The proof of the first part is an easy exercise. The second part can be proven by applying for each $R\ge1$ the Stone-Weierstrass theorem (see \textit{e.g} \cite[p. 198]{Gil87}) to the space $X\cap\overline{B(x_0,R)}$ and the set of functions $\{d_x\}_{x\in\overline{B(x_0,R)}\cap D}$.
\end{proof}

\noindent Let $(\Omega,\A)$ be a Borel space and $\se X$ a Borel field of proper metric spaces. We fix $\se x^0\in\callo X$ and we consider the family of metrics $\se\delta=\{\om\delta\}_{\omega\in\Omega}$ on $\C(\se X)=\{\C(\om X)\}_{\omega\in\Omega}$ given by the Lemma \ref{LemmaSpaceContinuousFunctionProperSpace}.

\begin{thm}\label{ThmStrucBorelFctCont}
The set
$${\mathcal L}(\Omega,\C(\se X)):=\{\se f\in\mathcal S(\Omega,\C(\se X)) \mid\se f(\se x)\text{ is Borel for every }\se x\in\mathcal L(\Omega,\se X)\}$$
is a Borel structure on $(\C(\se X),\se\delta)$. Moreover, the subfield $\C_0(\se X)$ where
$$\C_0(\om X)=\{f\in\C(\om X)\mid f(\om x^0)=0\}$$
is Borel.
\end{thm}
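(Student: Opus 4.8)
The plan is to check the three conditions of Definition~\ref{DefiBorelField} directly for the set $\mathcal{L}(\Omega,\C(\se X))$, viewed as a candidate Borel structure on the field of metric spaces $(\C(\se X),\se\delta)$ — each $(\C(\om X),\om\delta)$ being a metric space by Lemma~\ref{LemmaSpaceContinuousFunctionProperSpace} — and then to produce a fundamental family for the subfield $\C_0(\se X)$. Fix once and for all a fundamental family $\D=\{\se x^n\}_{n\ge1}$ of $\mathcal{L}(\Omega,\se X)$, so that $\{\om x^n\}_{n\ge1}$ is dense in $\om X$ for every $\omega$, and recall that $\se x^0\in\mathcal{L}(\Omega,\se X)$ is the section used to build $\se\delta$. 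For the \emph{separability condition} I would take $\D''$ to be the countable $\QQ$-algebra of sections generated by the sections $\omega\mapsto\om d(\om x^n,\cdot)\in\C(\om X)$, $n\ge1$, together with the constant section $\mathbf 1$. By the second part of Lemma~\ref{LemmaSpaceContinuousFunctionProperSpace}, the fibre of $\D''$ over $\omega$ is dense in $\C(\om X)$ for every $\omega$, and $\D''\subseteq\mathcal{L}(\Omega,\C(\se X))$: evaluating a generator $\omega\mapsto\om d(\om x^n,\cdot)$ at a section $\se x\in\mathcal{L}(\Omega,\se X)$ yields the Borel function $\se d(\se x^n,\se x)$ (compatibility of $\mathcal{L}(\Omega,\se X)$), evaluating $\mathbf 1$ yields the constant $1$, and evaluating an arbitrary element of $\D''$ at $\se x$ therefore yields a fixed $\QQ$-polynomial expression in these Borel functions, hence a Borel function.

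The substance of the argument is the \emph{compatibility condition}. Given $\se f,\se g\in\mathcal{L}(\Omega,\C(\se X))$, I would rewrite $\om\delta(\om f,\om g)$ so that the suprema and infima involved become countable. For $\epsi>0$ put
$$\Psi_\epsi(\omega):=\sup\bigl\{\,|\om f(\om x^n)-\om g(\om x^n)| : n\ge1,\ \om d(\om x^n,\om x^0)<1/\epsi\,\bigr\}.$$
As $\om f-\om g$ is continuous, the $\om x^n$ lying inside the open ball $B(\om x^0,1/\epsi)$ form a dense subset of that ball, whose closure is compact since $\om X$ is proper; hence $\Psi_\epsi(\omega)=\sup_{y\in B(\om x^0,1/\epsi)}|\om f(y)-\om g(y)|<\infty$, and so $\om\delta(\om f,\om g)=\inf\{\epsi>0\mid\Psi_\epsi(\omega)<\epsi\}$. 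Since $\Psi_\epsi(\omega)$ is non-increasing in $\epsi$, this set of $\epsi$'s is upward closed, so the infimum is unchanged if $\epsi$ ranges over $\QQ_{>0}$ only. Now, for fixed $\epsi\in\QQ_{>0}$, $\omega\mapsto\Psi_\epsi(\omega)$ is the supremum of the countably many Borel functions $\omega\mapsto|\se f(\se x^n)(\omega)-\se g(\se x^n)(\omega)|\cdot\mathbf 1_{\{\omega\,:\,\se d(\se x^n,\se x^0)(\omega)<1/\epsi\}}(\omega)$, hence Borel; and $\omega\mapsto\om\delta(\om f,\om g)$ is then the countable infimum over $\epsi\in\QQ_{>0}$ of the Borel functions taking the value $\epsi$ on $\{\Psi_\epsi<\epsi\}$ and $+\infty$ elsewhere, hence Borel. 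This gives condition (i).

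With (i) and (iii) available, the \emph{maximality condition} costs nothing: $\mathcal{L}(\Omega,\C(\se X))$ is closed under pointwise limits — convergence in each $(\C(\om X),\om\delta)$ forces pointwise convergence of the evaluations, so every $\se f(\se x)$ is a pointwise limit of Borel functions — and under countable Borel gluings — a countable Borel gluing of Borel functions is Borel — so Lemma~\ref{LemmaBorelGluingPoncutalLimit} yields condition (ii). Thus $\mathcal{L}(\Omega,\C(\se X))$ is a Borel structure. For the subfield $\C_0(\se X)$: each $\C_0(\om X)$ contains the zero function, so the base of the subfield is $\Omega\in\A$; as a fundamental family I would take $\D':=\{\,\se h-\se h(\se x^0)\cdot\mathbf 1\ :\ \se h\in\D''\,\}$. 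Each of its members has all of its values in $\C_0(\om X)$ (they vanish at $\om x^0$), it lies in $\mathcal{L}(\Omega,\C(\se X))$ because $\se h(\se x^0)$ is Borel (as $\se h\in\mathcal{L}(\Omega,\C(\se X))$ and $\se x^0\in\mathcal{L}(\Omega,\se X)$) and $(\se h-\se h(\se x^0)\mathbf 1)(\se x)=\se h(\se x)-\se h(\se x^0)$, and the fibres are dense in $\C_0(\om X)$: given $f\in\C_0(\om X)$, pick $\se h^{m_k}\in\D''$ with $\om h^{m_k}\to f$ in $(\C(\om X),\om\delta)$; then $\om h^{m_k}(\om x^0)\to f(\om x^0)=0$, so $\om h^{m_k}-\om h^{m_k}(\om x^0)\mathbf 1\to f$. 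By Definition~\ref{DefiBorelSubfield}, $\C_0(\se X)$ is a Borel subfield.

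The step I expect to be the main obstacle is the compatibility computation: one has to see through the nested $\inf_\epsi\sup_{B(x_0,1/\epsi)}$ in the definition of $\delta$ and turn both operations into countable ones — the inner supremum via the density of $\D$ in each fibre (with finiteness guaranteed by properness), the outer infimum via the monotonicity of $\Psi_\epsi$ in $\epsi$. All the remaining points are immediate consequences of Lemmas~\ref{LemmaSpaceContinuousFunctionProperSpace} and~\ref{LemmaBorelGluingPoncutalLimit} and of Definition~\ref{DefiBorelSubfield}.
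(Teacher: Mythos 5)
Your proposal is correct and follows essentially the same route as the paper's proof: maximality via Lemma \ref{LemmaBorelGluingPoncutalLimit}, separability via the countable $\QQ$-algebra generated by the sections $d_{\se x^n}$ and $\mathbf 1$ together with the Stone--Weierstrass statement of Lemma \ref{LemmaSpaceContinuousFunctionProperSpace}, compatibility by reducing the supremum over $B(\om x^0,1/\epsi)$ to a countable supremum over dense Borel sections of that ball, and the same fundamental family $\se h-\se h(\se x^0)\mathbf 1$ for $\C_0(\se X)$. Your treatment of the outer infimum over $\epsi$ (monotonicity of $\Psi_\epsi$, restriction to rational $\epsi$) is in fact slightly more explicit than the paper's one-line equivalence $\om\delta(\om f,\om g)\le\epsi\iff\sup_{\se x\in\D}|\om f(\om x)-\om g(\om x)|\le\epsi$, but it is the same argument.
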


\begin{proof}
This set is clearly closed under countable Borel gluings and pointwise limits, so by Lemma \ref{LemmaBorelGluingPoncutalLimit} we only have to check the points (i) and (iii) of the Definition \ref{DefiBorelField}.

\noindent Fix $\epsi>0$ and choose $\D$ a fundamental family of the Borel subfield $B(\se x^0,1/\epsi)$ (\textit{cf. }Example \ref{ExampOpenBall}). Pick $\se f,\se g\in{\mathcal L}(\Omega,\C(\se X))$. Then
$$\om\delta(\om f,\om g)\le\epsi\iff \sup_{\se x\in\D}|\om f(\om x)-\om g(\om x)|\le\epsi$$
and so ${\mathcal L}(\Omega,\C(\se X))$ is compatible with the family of metrics $\se\delta$ and the point (i) of the definition is verified.

\noindent Now observe that $\mathcal S(\Omega,\C(\se X))$ is naturally an algebra : if $\se f,\se g\in{\cal S}(\Omega,\C(\se X))$ and $\lambda\in\RR$, we can define
$$(\se f\se g)_\omega:=\om f\om g,\quad\om{(\se f+\se g)}=\om f+\om g\quad\text{et}\quad\om{(\lambda\se f)}:=\lambda\om f.$$
It's clear from its definition that the subset ${\mathcal L}(\Omega,\C(\se X))$ is a subalgebra of $\mathcal S(\Omega,\C(\se X))$. We now fix $\D=\{\se x^n\}_{n\ge1}$ a fundamental family of the Borel field $\se X$ and we define the following elements of ${\mathcal L}(\Omega,\C(\se X))$ :
$$\begin{array}{cccl} d_{\se{x^n}} : & \Omega&\longrightarrow& \mathcal{C}(X_{\point})\\
&\omega&\mapsto& \begin{array}{cccl}
                d_{x_{\omega}^n}:& X_{\omega}&\rightarrow&\mathbf{R}\\
                & x&\mapsto&d_{\omega}(x_{\omega}^n, x)\\
\end{array}\\
\end{array}\quad\text{et}\quad
\begin{array}{cccl}
\bold{1}_{\point} : & \Omega&\longrightarrow& \mathcal{C}(X_{\point})\\
&\omega&\mapsto& \begin{array}{cccl}
                \bold{1}_{\omega}:& X_{\omega}&\rightarrow&\mathbf{R}\\
                & x&\mapsto&1\\
\end{array}\\
\end{array}$$

\noindent We write $\AQ$ the countable $\QQ$-subalgebra of $\mathcal{S}(\Omega, \CXpoint)$ generated by $\{d_{\se x}\}_{\se x\in\D}\cup\{\se{\bold 1}\}$. Then $\AQ$ is contained in ${\mathcal L}(\Omega,\C(\se X))$ because this last set is an algebra that contains the generators of $\AQ$; moreover $\{\om f\mid\se f\in\AQ\}$ is dense in $\C(\om X)$ for every $\omega\in\Omega$ by Lemma \ref{LemmaSpaceContinuousFunctionProperSpace} so that the Condition \ref{DefiBorelField} (iii) is satisfied.

\noindent To prove that the subfield $\C_0(\se X)$ is Borel, it's enough to realize that if $\D$ is a fundamental family of the field $\C(\se X)$, then $\widetilde\D:=\{\se f-\se f(\se x^0)\mid\se f\in{\mathcal L}(\Omega,\C(\se X))\}$ is obviously a fundamental family of the subfield.
\end{proof}

\noindent We'll show now that the intersection behaves better in proper spaces than in complete ones (see the Proposition \ref{PropIntersectionClosedField}).

\begin{prop}\label{PropProperIntersectionClosedField}
Let $(\Omega,\A)$ be a Borel space and let $\se X$ be a Borel field of proper metric spaces. Let $\{\se F^n\}_{n\ge1}$ be a family of Borel subfield of closed sets. Then the subfield $\cap_{n\ge1}\se F^n$ is a Borel subfield.
\end{prop}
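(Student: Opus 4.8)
The plan is to reduce everything, via Proposition \ref{PropDistanceBorel}, to showing that $\se d(\se x,\cap_{n\ge1}\se F^n):\Omega\to\RR_+\cup\{\infty\}$ is Borel for every $\se x\in\callo X$ (recall that a proper metric space is complete, so $\se X$ is in particular a Borel field of complete spaces). This rests on two ingredients: \textbf{(a)} a compactness fact, valid in proper spaces, saying that for a \emph{decreasing} sequence of closed sets the distance to the intersection is the supremum of the distances; and \textbf{(b)} the Borelness of the distance function to a \emph{finite} intersection $\se F^1\cap\dots\cap\se F^N$.

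For (a) I would record the elementary fact that if $Y$ is a proper metric space, $C^1\supseteq C^2\supseteq\cdots$ are closed subsets and $C=\cap_N C^N$, then $d(y,C)=\sup_N d(y,C^N)$ for every $y\in Y$ (with the convention $d(\cdot,\emptyset)=\infty$). Indeed the sequence $d(y,C^N)$ is non-decreasing with $\sup_N d(y,C^N)\le d(y,C)$, and when $\ell:=\sup_N d(y,C^N)$ is finite, choosing $z_N\in C^N$ with $d(y,z_N)\le d(y,C^N)+1/N\le\ell+1$ places all $z_N$ in the compact ball $\overline B(y,\ell+1)$, so a subsequence converges to some $z$, which lies in every $C^M$ (closed), hence in $C$, whence $d(y,C)\le d(y,z)\le\ell$. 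Granting (b), this applies fibrewise to the decreasing sequence $G^N_\omega:=F^1_\omega\cap\dots\cap F^N_\omega$ of closed sets, giving $\se d(\se x,\cap_n\se F^n)=\sup_N\se d(\se x,G^N)$ pointwise; each $\se d(\se x,G^N)$ is Borel by (b) and Proposition \ref{PropDistanceBorel}, so the supremum is Borel, and Proposition \ref{PropDistanceBorel} again yields that $\cap_n\se F^n$ is a Borel subfield.

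For (b) it suffices, by induction on $N$, to treat $N=2$, i.e. to show that $\se F\cap\se G$ is a Borel subfield whenever $\se F,\se G$ are. Fix fundamental families $\{\se f^i\}_{i\ge1}$ of $\se F$ and $\{\se g^j\}_{j\ge1}$ of $\se G$ (so $\{f^i_\omega\}$ is dense in $F_\omega$ on the base of $\se F$ and similarly for $\se G$, and we set all the distances below to $+\infty$ off these bases), fix $\se x\in\callo X$, and put
$$h_\omega(m):=\inf\Big\{\max\big(d_\omega(x_\omega,f^i_\omega),\,d_\omega(x_\omega,g^j_\omega)\big)\ :\ i,j\ge1,\ d_\omega(f^i_\omega,g^j_\omega)\le 1/m\Big\}$$
with $\inf\emptyset=\infty$. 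Each $h_{\cdot}(m)$ is Borel, being a countable infimum of the Borel functions $\max\big(\se d(\se x,\se f^i),\se d(\se x,\se g^j)\big)$ over the Borel index conditions $\{\se d(\se f^i,\se g^j)\le 1/m\}$. The key claim is that $d_\omega(x_\omega,F_\omega\cap G_\omega)=\sup_{m\ge1}h_\omega(m)$ for all $\omega$. For ``$\ge$'': if $F_\omega\cap G_\omega\ne\emptyset$, any $p$ in it is approximated within $1/(2m)$ by some $f^i_\omega$ and some $g^j_\omega$, which are then admissible for $h_\omega(m)$ and give $h_\omega(m)<d_\omega(x_\omega,p)+1/(2m)$; letting $m\to\infty$ and varying $p$ gives the bound (the case $F_\omega\cap G_\omega=\emptyset$ being trivial). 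For ``$\le$'': if $\ell:=\sup_m h_\omega(m)<\infty$, pick for each $m$ admissible $f^{i_m}_\omega,g^{j_m}_\omega$ realizing $h_\omega(m)$ up to $1/m$; these points lie in the compact ball $\overline B(x_\omega,\ell+1)$, so along a subsequence $f^{i_m}_\omega\to q$, and since $d_\omega(f^{i_m}_\omega,g^{j_m}_\omega)\to 0$ also $g^{j_m}_\omega\to q$, forcing $q\in\overline{F_\omega}\cap\overline{G_\omega}=F_\omega\cap G_\omega$ with $d_\omega(x_\omega,q)\le\ell$ (the case $\ell=\infty$ being trivial). Thus $\se d(\se x,\se F\cap\se G)$ is Borel for every $\se x\in\callo X$, Proposition \ref{PropDistanceBorel} gives that $\se F\cap\se G$ is a Borel subfield, and the induction delivers (b).

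I expect the proof of the key claim in (b) — producing a manifestly Borel formula for $d_\omega(x_\omega,F_\omega\cap G_\omega)$ out of two dense families — to be the main point, and it genuinely uses properness through the compactness of closed balls; the same is true of fact (a). This is exactly what breaks down for merely complete spaces (classical examples of nested bounded closed convex subsets of a Hilbert space with empty intersection), which is why Proposition \ref{PropIntersectionClosedField} only gives an almost-everywhere conclusion. Note also that, in contrast to that proposition, no measure is needed here.
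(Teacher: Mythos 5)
Your proof is correct, and its skeleton coincides with the paper's: reduce the countable intersection to a decreasing sequence of finite intersections, handle decreasing sequences via the identity $d(x,\cap_N C^N)=\sup_N d(x,C^N)$ valid in proper spaces (this is exactly the paper's Lemma \ref{LemmaDistanceProper} together with step (1) of its proof), and make the binary intersection $\se F\cap\se G$ the crux. Where you genuinely diverge is in that binary step. The paper first proves (its step (2)) that for $\se f\in\mathcal L(\Omega,\C(\se X))$ and Borel $\se a,\se b$ the preimage subfield $\se f^{-1}([\se a,\se b])$ is Borel --- by sandwiching it as the decreasing intersection of the closures of the open subfields $\se f^{-1}(]\se a-1/n,\se b+1/n[)$ and invoking step (1) again --- and then simply writes $\se F\cap\se G=(d_{\se F}+d_{\se G})^{-1}(0)$, using Proposition \ref{PropDistanceBorel} to see that $d_{\se F},d_{\se G}\in\mathcal L(\Omega,\C(\se X))$. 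You instead exhibit a manifestly Borel formula for $\omega\mapsto d_\omega(x_\omega,F_\omega\cap G_\omega)$ directly out of the two fundamental families, with a compactness argument to check that the formula computes the right quantity. Both routes use properness at the binary step (the paper through its step (1), you through the compactness of $\overline B(x_\omega,\ell+1)$). Your version is more elementary and bypasses the $\C(\se X)$ machinery of Theorem \ref{ThmStrucBorelFctCont} entirely; the paper's step (2) is a reusable lemma that it exploits again later (for instance for circumcenters in Lemma \ref{LemCentresBorel} and in Theorem \ref{ThmCircum}), which is what that detour buys. Two small remarks: your ``$\ge$'' inequality needs the (obvious) monotonicity of $h_\omega(m)$ in $m$ to pass from $h_\omega(m)<d_\omega(x_\omega,p)+1/(2m)$ to the claimed bound, as you implicitly do; and your closing parenthetical is inaccurate --- nested bounded closed \emph{convex} subsets of a Hilbert space always have a common point by weak compactness, so the standard counterexamples in merely complete spaces are non-convex or live in non-reflexive Banach spaces --- but this aside does not affect the proof.
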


\noindent We'll need the following Lemma whose proof is straightforward.

\begin{lem}\label{LemmaDistanceProper}
Let $X$ be a proper metric space. Then the following assertions are true.
\begin{itemize}
\item[(i)] Let $F$ be a closed subset of $X$. Then for every $x\in X$ the distance $d(x,F)$ is realized.
\item[(ii)] Let $\{F_n\}_{n\ge1}$ be a decreasing sequences of closed subsets of $X$. Then for every $x\in X$ we have $d(x,\cap_{n\ge1}F_n)=\lim_{n\tend\infty}d(x,F_n)=\sup_nd(x,F_n)$ where $d(x,\emptyset)=\infty$ by convention.
\end{itemize}
\end{lem}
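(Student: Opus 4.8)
The plan is to derive both statements from the single structural feature of properness---that closed balls are compact---so that any minimizing sequence for a distance has a convergent subsequence, whose limit is then trapped by the closed sets at hand.

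For (i), I would first dispose of the case $F=\emptyset$ (where $d(x,F)=\infty$ and nothing must be realized) and then assume $F\neq\emptyset$. Writing $r:=d(x,F)$, I would take a sequence $\{y_n\}_{n\ge1}\subseteq F$ with $d(x,y_n)\to r$; for $n$ large these points lie in the closed ball $\overline B(x,r+1)$, which is compact by properness, so a subsequence $\{y_{n_k}\}$ converges to some $y\in X$. Closedness of $F$ gives $y\in F$, and continuity of the distance yields $d(x,y)=\lim_k d(x,y_{n_k})=r$, so $d(x,F)$ is realized at $y$.

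For (ii), I would first note that the $F_n$ being decreasing makes $n\mapsto d(x,F_n)$ non-decreasing, so $\lim_n d(x,F_n)=\sup_n d(x,F_n)=:r$ exists in $\RR_+\cup\{\infty\}$. The bound $d(x,\cap_{n\ge1} F_n)\ge r$ is immediate from $\cap_{n\ge1} F_n\subseteq F_m$ for every $m$. When $r=\infty$ this already forces $d(x,\cap_{n\ge1} F_n)=\infty$, matching the convention $d(x,\emptyset)=\infty$ should the intersection be empty. When $r<\infty$, I would invoke part (i) to pick $y_n\in F_n$ with $d(x,y_n)=d(x,F_n)\le r$, so that all the $y_n$ lie in the compact ball $\overline B(x,r)$ and a subsequence $\{y_{n_k}\}$ converges to some $y$.

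The one step meriting attention is checking $y\in\cap_{n\ge1} F_n$: fixing $m$, for all large $k$ one has $n_k\ge m$ and hence $y_{n_k}\in F_{n_k}\subseteq F_m$ by monotonicity, so the tail of the subsequence lies in the closed set $F_m$, giving $y\in F_m$; as $m$ was arbitrary, $y\in\cap_{m\ge1} F_m$. Continuity of the distance then gives $d(x,\cap_{n\ge1} F_n)\le d(x,y)=\lim_k d(x,F_{n_k})=r$, which together with the earlier bound yields equality throughout. I expect no genuine obstacle here: properness is doing all the work, and the decreasing hypothesis is precisely what lets the limit point survive in every $F_m$.
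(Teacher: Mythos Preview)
Your argument is correct and is precisely the standard compactness argument one expects here; the paper itself does not give a proof, merely stating that it is straightforward, and your write-up is exactly the sort of verification the authors had in mind.
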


\begin{proof}[Proof of Proposition \ref{PropProperIntersectionClosedField}]
We'll make the proof in three steps.
\smallskip

\noindent (1) The Proposition \ref{PropDistanceBorel} - applied twice - implies that the conclusion of the theorem is true in the particular case when $\om F^{n+1}\subseteq\om F^{n}$ for every $\omega\in\Omega$ and $n\ge1$, since $\se d(\se x,\cap_{n\ge1}\se F^n)=\lim_{n\tend\infty}\se d(\se x,\se F^n)$ by Lemma \ref{LemmaDistanceProper}.
\smallskip

\noindent (2) Let $\se f\in\mathcal L(\Omega,\C(\se X))$ and $\se a,\se b\in\mathcal L(\Omega,\RR)$. Let $\se F:=\se f^{-1}([\se a,\se b])$ be the subfield defined by $\om F=\om f^{-1}([\om a,\om b])$. We'll show that $\se F$ is a Borel subfield. For every integer $n\ge1$ we set $\se U^n:=\se f^{-1}(]\se a-1/n,\se b+1/n[)$ for the subfield defined similar to $\se F$. It's a Borel subfield by Lemma \ref{LemmaSubfieldOpenSubsets}, because if $\se x\in\callo X$, then
$$\omdansom{\om x\in\om U^n}=\omdansom{\om a-1/n\le\om f(\om x)\le\om b+1/n}$$and the latter is Borel by the definition of $\mathcal L(\Omega,\C(\se X))$. By the Remark \ref{RemFieldBetweenAndUnion}, $\overline{\se U^n}$ is a Borel subfield, so that the sequence $\{\overline{\se U^n}\}_{n\ge1}$ is a decreasing sequence of Borel subfields of closed subsets such that $\cap_{n\ge1}\overline{\se U^n}=\se F$ (this equality being satisfied because every $\om f$ is continuous). Thus $\se F$ is a Borel subfield by the first step.
\smallskip

\noindent (3) We'll show now that if $\se F$ and $\se G$ are Borel subfields of closed subsets then so is $\se F\cap\se G$ (and the conclusion of the theorem will then follow by applying recursively this fact and by using step (1)). By the Proposition \ref{PropDistanceBorel} $d_{\se F},d_{\se G}\in\mathcal L(\Omega,\C(\se X))$, so that $\se F\cap\se G=\big(d_{\se F}+d_{\se G}\big)^{-1}(0)$ is a Borel subfield by step (2).
\end{proof}

\section{Borel fields of CAT(0) spaces}\label{SecBorelCAT(0)}

\subsection{Basic Properties}

\noindent First recall some notation and  terminology. A subset $C\subseteq X$ is \textit{convex} if it contains  any geodesic segment joining any two of its points. For such a closed convex subset in a complete CAT(0) space we denote by $\pi_C(x)$ the unique point which satisfies $d(x,\pi_C(x))=d(x,C):=\inf_{y\in C}d(x,y)$ \cite[II.2.4]{BH99}. This is the \textit{projection} of $x$ on $C$ and the \textit{projection map} $\pi_C : X\rightarrow C$ does not increase distances. The \textit{circumradius} of a non empty bounded set $A\subseteq X$ is $r(A):=\inf\{r>0\mid\exists x\in X, A\subseteq\overline{B}(x,r)\}$. This infimum is achieved and there exists a unique point $c_A\in X$ such that $A\subseteq\overline{B}(c_A,r(A))$ \cite[II.2.7]{BH99}. This point is called the \textit{circumcenter} of $A$. Obviously, if $\ch X$ is a field of metric spaces such that $\om X$ is a CAT(0) spaces for all $\omega\in \Omega$, we call it a \textit{field of CAT(0) spaces}. \textsc{Monod} was the first to consider such fields in \cite{Mon06}.

\begin{defi}\label{DefiBorelGeodProj}
Let $(\Omega, \A)$ be a Borel space, $(\Omega, \se X)$ be a Borel field of proper CAT(0) spaces, $\se x, \se y\in\callo{X}$ be two sections and $\ch{C}$ be a Borel subfield of non empty closed convex sets.

\noindent We define
$$\begin{array}{cccl}
\gam{x}{y}:&[0,1]&\rightarrow&\calSe X\\
&t&\mapsto&[\gam{x}{y}(t):\omega\mapsto\gamom{x}{y}(t)],
\end{array}$$
where $\gamom{x}{y}:[0,1]\rightarrow\om X$ is the unique geodesic with constant speed such that $\gamom{x}{y}(0)=\om x$ and $\gamom{x}{y}(1)=\om y$ for all $\omega\in \Omega$.

\noindent We also define $\pi_{\se C}(\se x)\in\calSe{X}$ where $\pi_{\om C}(\om x)$ is the projection of $\om x$ on $\om C$ for all $\omega\in \Omega$.
\end{defi}

\noindent We know that the field $\overline{B}(\se x, \se r)$ is Borel if $\se x\in\callo{X}$ and $\se r$ is a non negative Borel function. Since we have on one hand $\gam{x}{y}(t)=\overline B(\se x,t\dist{x}{y})\cap\overline B(\se y,(1-t)\dist{x}{y})$ for all $t\in[0,1]$ and on the other hand $\pi_{\se C}(\se x)=\se C\cap\overline B(\se x,\dist{x}{C})$, we conclude by using the Proposition \ref{PropProperIntersectionClosedField} that the sections introduced in the Definition \ref{DefiBorelGeodProj} are Borel. In the same spirit we can define a circumradius function and a circumcenter section whenever a Borel subfield of bounded sets is given.

\begin{lem}\label{LemCentresBorel}
Let $(\Omega, \A)$ be a Borel space, $\ch{X}$ be a Borel field of proper CAT(0) spaces and $\ch{B}$ be a Borel subfield of bounded sets of $\ch{X}$.
\begin{enumerate}
\item[(i)] The circumradius function $r(\se B)$ is Borel.
\item[(ii)] The section of circumcenters is Borel, \textit{i.e.} $c_{\se B}\in\mathcal{L}(\Omega,\se X)$.
\end{enumerate}
\end{lem}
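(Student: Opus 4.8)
The plan is to reduce both statements to countable operations on Borel functions, in the spirit of the remarks following Definition~\ref{DefiBorelGeodProj}. I first fix a fundamental family $\D'=\{\se y^n\}_{n\ge1}$ of the subfield $\ch B$ and a fundamental family $\D=\{\se x^m\}_{m\ge1}$ of $\ch X$; replacing $\Omega$ by the Borel base $\Omega'$ of $\ch B$, I may assume $\om B\nonvide$ for every $\omega$ (and restrict all Borel structures to $\Omega'$). By Definition~\ref{DefiBorelSubfield}\,(ii) we have $\{\om y^n\}_{n\ge1}\subseteq\om B\subseteq\overline{\{\om y^n\}}_{n\ge1}$, so $\{\om y^n\}_{n\ge1}$ is dense in $\overline{\om B}$; moreover each $\se y^n$ lies in $\mathcal L(\Omega',\se X)$, so every distance function $\dist{x^m}{y^n}$ is Borel.

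For (i): since $r(A)=r(\overline A)$ and $\om d(x,\cdot)$ is continuous, the pointwise identity $r(\om B)=\inf_{x\in\om X}\sup_{n\ge1}\om d(x,\om y^n)$ holds. The function $x\mapsto\sup_{n\ge1}\om d(x,\om y^n)$ is finite (because $\om B$ is bounded) and $1$-Lipschitz, hence continuous, so its infimum over $\om X$ coincides with its infimum over the dense set $\{\om x^m\}_{m\ge1}$, giving $r(\se B)(\omega)=\inf_{m\ge1}\sup_{n\ge1}\dist{x^m}{y^n}(\omega)$. A countable supremum followed by a countable infimum of Borel functions is Borel, so $r(\se B)$ is Borel.

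For (ii): I would realize the circumcenter as a countable intersection of closed balls. Put $\se\rho:=r(\se B)$, a non-negative Borel function by (i). Since every $\om X$ is complete and geodesic, each $\overline B(\se y^n,\se\rho)$ is a Borel subfield of closed sets (the example on closed balls following Proposition~\ref{PropDistanceBorel}). For every $\omega$ one has $\bigcap_{n\ge1}\overline B(\om y^n,r(\om B))=\{c_{\om B}\}$: indeed $c_{\om B}$ lies in each ball because $\om y^n\in\om B\subseteq\overline B(c_{\om B},r(\om B))$, and if $z$ lies in all of them then $\{\om y^n\}_{n\ge1}\subseteq\overline B(z,r(\om B))$, hence $\overline{\om B}\subseteq\overline B(z,r(\om B))$, so $z$ is a circumcenter of $\om B$ and $z=c_{\om B}$ by the uniqueness in \cite[II.2.7]{BH99}. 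Proposition~\ref{PropProperIntersectionClosedField} then shows that $\bigcap_{n\ge1}\overline B(\se y^n,\se\rho)$ is a Borel subfield; being a singleton over each $\omega$, any section belonging to a fundamental family of this subfield is a Borel section equal to $c_{\se B}$, whence $c_{\se B}\in\mathcal L(\Omega,\se X)$. The only ingredient here that is not pure bookkeeping is the existence and uniqueness of the circumcenter in complete CAT(0) spaces, which is what turns ``$z$ lies in every ball'' into ``$z=c_{\om B}$''; with that available, and with Proposition~\ref{PropProperIntersectionClosedField} and the closed-ball example already at hand, I do not anticipate any serious obstacle.
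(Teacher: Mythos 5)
Your proof is correct. Part (i) is essentially the paper's argument: both reduce $r(\se B)$ to the countable inf--sup $\inf_{\se x\in\D}\sup_{\se y\in\D'}\se d(\se x,\se y)$ over fundamental families, using density together with the $1$-Lipschitz continuity of $x\mapsto\sup_{y\in B}d(x,y)$. For part (ii) you take a genuinely different, though closely related, route. The paper keeps the function $\om f(x)=\sup_{y\in\om B}d(x,y)$ from part (i), checks that $\se f\in{\mathcal L}(\Omega,\C(\se X))$, and writes $c_{\se B}=\se f^{-1}(r(\se B))$, which is a Borel subfield by step 2 of the proof of Proposition \ref{PropProperIntersectionClosedField} (preimages of intervals $[\se a,\se b]$ under continuous morphisms are Borel subfields of closed sets). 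You instead realize $\{c_{\om B}\}$ as $\bigcap_{n\ge1}\overline B(\om y^n,r(\om B))$ and apply Proposition \ref{PropProperIntersectionClosedField} directly to the closed balls; your verification of this set identity (a point $z$ in every ball satisfies $\om B\subseteq\overline{\{\om y^n\}}_{n\ge1}\subseteq\overline B(z,r(\om B))$, hence $z=c_{\om B}$ by uniqueness of circumcenters) is sound. Both arguments rest on the same two ingredients --- existence and uniqueness of circumcenters in complete CAT(0) spaces, and stability of Borel subfields of closed sets under countable intersections in proper fields --- so the difference is one of packaging: the paper's version recycles the morphism $\se f$ and the general preimage lemma, while yours only needs the closed-ball example and so avoids verifying that $\se f$ is a continuous morphism. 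Your preliminary restriction to the base $\Omega'$ of $\se B$ is also the right way to handle the fibers where $\om B=\emptyset$, a point the paper leaves implicit.
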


\begin{proof} (i) Recall that if $B\subseteq X$ is a bounded subset in a proper CAT(0) space, then we have the equality $r(B)=\inf_{x\in X}\{\sup_{y\in B}d(x,y)\}$. So define $\om f : \om X\rightarrow\mathbf{R}$ by $\om f(\om x)=\sup_{\om y\in\om B}d(\om x, \om y)$ for all $\omega\in\Omega$. We have $\se f\in\mathcal L(\Omega,\C(\se X))$ because if $\D\subseteq\callo{X}$, $\D'\subseteq\callo{B}$ are fundamental families, then $\se f(\se x)=\sup_{\se y\in\D'}d(\se x,\se y)$ for every $\se x\in\callo X$. Consequently we deduce that the function $r(\se B)=\inf \se f=\inf_{\se x\in\D}\se f(\se x)$ is Borel.
\smallskip

\noindent (ii) Observe that $c_{\se B}=\se f^{-1}(r(\se B))$ is Borel (\textit{cf.} proof of Proposition \ref{PropProperIntersectionClosedField}, step 2).
\end{proof}

\noindent Other Borel functions appear naturally on Borel fields of CAT(0) spaces.

\noindent First recall that for CAT(0) spaces it is possible to define several notions of angles. The \textit{comparison angle} at $p$ between $x,y$ denoted by $\overline{\angle}_p(x,y)$ is the corresponding angle in a comparison triangle. This allows us to define an infinitesimal notion of angle: if $p, x, y\in X$ and $c,c':[0,b],[0,b']\rightarrow X$ are two geodesic segments such that $c(0)=c'(0)=p$ and $c(b)=x$, $c'(b')=y$, then the \textit{Alexandrov angle} at $p$ between $x$ and $y$ is defining by $\angle_p(x,y):=\limsup_{t,t'\rightarrow 0}\overline{\angle}_p(c(t), c'(t))$ where the CAT(0) hypothesis ensure the existence of this limit.

\begin{lem}\label{LemAnglesInsideBorel}
Let $(\Omega, \A)$ be a Borel space and $\ch{X}$ be a Borel field of proper CAT(0) spaces.
\begin{enumerate}
\item[(i)] If $\se x, \se y, \se p\in\callo{X}$ are such that $\om x\neq\om p\neq\om y$ for all $\omega\in\Omega$, then the  comparison angle function $\overline{\angle}_{\se p}(\se x, \se y)$ is Borel.
\item[(ii)] If we replace in (i) the comparison angle by the Alexandrov angle the function obtained is also Borel.
\end{enumerate}
\end{lem}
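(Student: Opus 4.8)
The plan is to reduce everything to the Borel structure on $\C(\se X)$ established in Theorem \ref{ThmStrucBorelFctCont}, together with the fact that distance functions between Borel sections are Borel.

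\emph{Part (i).} First I would write down the explicit formula for the comparison angle in terms of the three pairwise distances. If $a=\om d(\om p,\om x)$, $b=\om d(\om p,\om y)$ and $c=\om d(\om x,\om y)$, then the Euclidean law of cosines gives
$$\anglo{p}{x}{y}=\arccos\left(\frac{a^2+b^2-c^2}{2ab}\right).$$
By the compatibility condition of the Borel structure, the sections $\se d(\se p,\se x)$, $\se d(\se p,\se y)$ and $\se d(\se x,\se y)$ are Borel functions on $\Omega$, and they are strictly positive by the hypothesis $\om x\neq\om p\neq\om y$ (so the denominator never vanishes). Since $\arccos$ and the arithmetic operations are continuous, the composite $\angl{p}{x}{y}$ is Borel. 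This is essentially immediate; the only point to check is that the argument of $\arccos$ stays in $[-1,1]$, which is automatic because a comparison triangle exists (triangle inequality).

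\emph{Part (ii).} For the Alexandrov angle I would use the monotonicity property of comparison angles along geodesics: for a CAT(0) space, if $c,c'$ are the unit-speed geodesics from $p$ to $x$ and to $y$, then $t\mapsto\overline\angle_p(c(t),c'(t))$ is nondecreasing as $t\downarrow 0$, so that $\angle_p(x,y)=\lim_{t\to 0}\overline\angle_p(c(t),c'(t))=\inf_{t>0}\overline\angle_p(c(t),c'(t))$; in particular the limit can be taken along a fixed sequence $t_n\downarrow 0$, say $t_n=1/n$. Concretely, set $\se x^n:=\gam{p}{x}(1/n)$ and $\se y^n:=\gam{p}{y}(1/n)$ for $n$ large enough that $1/n\le\min\{\se d(\se p,\se x),\se d(\se p,\se y)\}$ pointwise (one should first partition $\Omega$ according to the values of these distances, or simply use the parametrization by $\gam{p}{x}(t)$ with $t$ a Borel function of $\omega$, to make sense of this uniformly; a countable Borel gluing handles the non-uniformity). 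By the discussion following Definition \ref{DefiBorelGeodProj}, the sections $\se x^n$ and $\se y^n$ are Borel, and clearly $\om x^n\neq\om p\neq\om y^n$. Applying part (i) to the triples $(\se p,\se x^n,\se y^n)$ shows that each $\angl{p}{x^n}{y^n}$ is a Borel function of $\omega$, and then
$$\anglal{p}{x}{y}=\lim_{n\to\infty}\angl{p}{x^n}{y^n}$$
pointwise on $\Omega$, so the Alexandrov angle function is a pointwise limit of Borel functions, hence Borel.

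\emph{Main obstacle.} The substantive issue is purely bookkeeping: making the truncation parameter in Part (ii) uniform in $\omega$, i.e. choosing the points $\gamom{p}{x}(t)$ at which to evaluate comparison angles so that $t$ depends measurably on $\omega$ and stays below the relevant distances. I expect to dispose of this by writing $\se t:=\min\{\se d(\se p,\se x),\se d(\se p,\se y),1\}/n$, which is a Borel function, noting that $\gam{p}{x}(\se t)$ and $\gam{p}{y}(\se t)$ are still Borel sections (the geodesic evaluation map is jointly well-behaved, cf. Definition \ref{DefiBorelGeodProj}), and then passing to the limit $n\to\infty$ as above. Everything else is a direct application of the law of cosines and the already-established Borel calculus on the field.
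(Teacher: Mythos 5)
Your proposal is correct and follows essentially the same route as the paper: part (i) is the law of cosines applied to the Borel distance functions, and part (ii) reduces the Alexandrov angle to a limit of comparison angles at points on the geodesics near $\se p$, these points being Borel sections because they are intersections of closed balls with Borel radii. The only cosmetic differences are that the paper evaluates both geodesics at the fixed distance $1/n$ on the pieces $\Omega_n$ where this makes sense and uses the isoceles form $2\arcsin\bigl(\tfrac{n}{2}\dist{c^n}{\widetilde c^{\,n}}\bigr)$ of the comparison angle, whereas you use an $\omega$-dependent Borel parameter and reuse part (i) directly.
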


\begin{proof}
(i) This assertion follows directly from the law of cosines which can be used to write the angle in terms of the distances.
\smallskip

\noindent (ii) For each $n\in\mathbf{N}$ let $\Omega_n:=\{\omega\in\Omega\mid\min\{\om d(\om p, \om x), \om d(\om p, \om y)\}\ge\tfrac{1}{n}\}\in\A$ and define two sections in $\mathcal{L}(\Omega, \se X)$ by
$$\se{c^n}\mid_{\Omega_n}:=\overline B (\se p,1/n)\cap\overline B(\se x,\dist{p}{x}-1/n)\quad\text{and}\quad\se{{\widetilde c^{\,n}}}\mid_{\Omega_n}:=\overline B (\se p,1/n)\cap\overline B(\se y,\dist{p}{y}-1/n)$$
which are prolonged in an arbitrarily Borel way on $\Omega\setminus\Omega_n$. Since $\om x\neq\om p\neq\om y$ for all $\omega\in\Omega$ we have $\Omega=\cup_{n\ge1}\Omega_n$ and thus for every $\omega\in\Omega$ there exists $n_\omega\in\mathbf{N}$ such that $\om c^n=\om c(1/n)$ and $\om{\widetilde c^{\,n}}=\om{\widetilde c}(1/n)$ for all $n\ge\om n$, where $\om c:[0,\om d(\om p,\om x)]\rightarrow\om X$ (resp. $\widetilde{c}_\omega:[0,\om d(\om p,\om y)]\rightarrow\om X$) is the geodesic from $\om p$ to $\om x$ (resp. $\om y$). By \cite[II.3.1]{BH99} we have
$$\anglal{p}{x}{y}=\lim_{n\rightarrow\infty}2\arcsin\left(\frac{n}{2}\cdot\dist{c^n}{{\widetilde{c}^{\,n}}}\right)$$
and this shows that the function is Borel.
\end{proof}

\noindent \noindent Recall \cite[II.8]{BH99} that if $X$ is a proper CAT(0) space, the boundary at infinity $\partial X$ can be defined as the set of equivalence classes of geodesic rays in $X$ where two rays are equivalent (\textit{asymptotic}) if they remain at bounded distance from each other. Often we write $c(\infty)$ for the equivalence class of the geodesic ray $c$ and a typical point of $\bd X$ is written $\xi$. Fixing a base point $x_0\in X$ leads to a bijection between $\xi\in\bd X$ and the unique geodesic ray $c_{x_0,\xi}$ starting at $x_0$ and such that $c(\infty)=\xi$. This identification can be used to define the conic topology (which turns out to be independent of the choice of $x_0$) : $\xi_n\tend\xi$ if $c_{x_0,\xi_n}(t)\tend c_{x_0,\xi}(t)$ for all $t\ge0$. An other equivalent construction uses the application $i:X\rightarrow\mathcal{C}_0(X)$ defined by $x\mapsto d_x-d(x,x_0)$, where $\mathcal{C}_0(X)$ is endowed with the topology of uniform convergence on compact sets. In general for an arbitrary proper metric space this application is \textit{not} a homeomorphism onto its image. But it is if the space is geodesic \cite{Bal95}. It can be shown that $\partial X$ is homeomorphic to $\overline{i(X)}\setminus i(X)$ : to $\xi\in\bd X$ we can associate the Busemann function $b_{x_0,\xi}: X\rightarrow\mathbf{R}$, $x\mapsto\lim_{t\rightarrow\infty} d(x,c_{x_0,\xi}(t))-d(x_0, c_{x_0,\xi}(t))$. These functions satisfies
\begin{equation}\label{EquationCocycleBusemann}
b_{z,\xi}(y)=b_{x,\xi}(y)-b_{x,\xi}(z)\quad \xi\in\bd X,\ x,y,z\in X.
\end{equation}

\noindent For all $\omega\in\Omega$ we now define the application $i_{\omega} : X_\omega\longrightarrow\mathcal{C}_0(\om X)$ by setting $\om x\mapsto d_{\om x}-d(\om x, \om x^0)$ where $\se x^0\in\calSe{X}$ is a fixed section. This will enable us to deal with the Borel structure on the fields of boundaries.

\begin{thm}\label{ThmStructBord}
Let $(\Omega, \A)$ be a Borel space, $\ch{X}$ be a Borel field of proper CAT(0) spaces and $\se x^0\in\callo{X}$.
\begin{enumerate}
\item[(i)] We have $\se i\in\widetilde{\mathcal{L}}\big(\Omega,\C(\se X,\C_0(\se X))\big)$  and the subfields $\se i(\se X)$, $\overline{\se i(\se X)}$ are Borel - where $\C_0(\se X)$ is endowed with the Borel structure inherited from $(\C(\se X),\se\delta)$ (\textit{cf.} Theorem \ref{ThmStrucBorelFctCont}).
\item[(ii)] The field $\partial\se X$ is a Borel subfield of closed sets of $\overline{\se i(\se X)}$.
\end{enumerate}
\end{thm}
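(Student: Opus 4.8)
<br>

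The plan is to verify the two parts separately, reducing everything to checking Borelness on a fundamental family and then invoking the machinery already developed — in particular Theorem~\ref{ThmStrucBorelFctCont} (the Borel structure on $\C(\se X)$ and the Borelness of $\C_0(\se X)$), Proposition~\ref{PropProperIntersectionClosedField} (intersections of Borel subfields of closed sets are Borel in proper fields), and Proposition~\ref{PropDistanceBorel} (Borelness of a closed subfield is detected by the distance functions).

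\medskip
\noindent\textbf{Part (i).} First I would show $\se i\in\widetilde{\mathcal L}\big(\Omega,\C(\se X,\C_0(\se X))\big)$. Pointwise, each $i_\omega(x_\omega)=d_{x_\omega}-d(x_\omega,x^0_\omega)$ lies in $\C_0(\om X)$ since it vanishes at $\om x^0$, and $i_\omega$ is continuous (indeed $1$-Lipschitz into the sup-type metric $\om\delta$ on compacts). By the second remark after the definition of morphisms, to check that $\se i$ is a morphism it suffices to verify $\se i(\D)\subseteq\mathcal L(\Omega,\C(\se X))$ for a fundamental family $\D=\{\se x^n\}_{n\ge1}$ of $\se X$; but $\se i(\se x^n)=d_{\se x^n}-\se d(\se x^n,\se x^0)\cdot\se{\bold 1}$, and both $d_{\se x^n}$ and $\se{\bold 1}$ are in $\mathcal L(\Omega,\C(\se X))$ while $\se d(\se x^n,\se x^0)$ is a Borel scalar function, so the difference lies in $\mathcal L(\Omega,\C(\se X))$ because that set is an algebra stable under multiplication by Borel scalars. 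Since the image of each $i_\omega$ lands in $\C_0(\om X)$ (a Borel subfield by Theorem~\ref{ThmStrucBorelFctCont}), transitivity of Borel subfields (Remark~\ref{RemFieldBetweenAndUnion}~(5)) lets us regard $\se i$ as a morphism into $\C_0(\se X)$. To see that $\se i(\se X)$ is a Borel subfield, take the countable family $\{\se i(\se x^n)\}_{n\ge1}\subseteq\mathcal L(\Omega,\C_0(\se X))$; since $\{\om x^n\}_{n\ge1}$ is dense in $\om X$ and $i_\omega$ is continuous, $\{i_\omega(x^n_\omega)\}_{n\ge1}$ is dense in $i_\omega(\om X)$, so the two conditions of Definition~\ref{DefiBorelSubfield} hold (the base is all of $\Omega$). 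Then $\overline{\se i(\se X)}$ is Borel by Remark~\ref{RemFieldBetweenAndUnion}~(3).

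\medskip
\noindent\textbf{Part (ii).} Here $\bd\om X$ is identified with $\overline{i_\omega(\om X)}\setminus i_\omega(\om X)$, so each $\bd\om X$ is a subset of the complete metric space $\C_0(\om X)$; moreover $\bd\om X$ is closed in $\overline{i_\omega(\om X)}$ — this is the standard fact that the horofunction boundary is closed, equivalently $i_\omega(\om X)$ is open in $\overline{i_\omega(\om X)}$ because $X_\omega$ is proper and geodesic (\cite{Bal95}, \cite[II.8]{BH99}). By Proposition~\ref{PropDistanceBorel} applied to the Borel field of complete metric spaces $\big(\Omega,\overline{\se i(\se X)}\big)$, it is enough to show that for each $\se x$ in a fundamental family $\widetilde\D$ of $\overline{\se i(\se X)}$ the function $\omega\mapsto d\big(\se x_\omega,\bd\om X\big)$ is Borel. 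The natural choice of $\widetilde\D$ is built from the $\se i(\se x^n)$ together with pointwise limits of their Borel gluings (Lemma~\ref{LemmaBorelStructureFundamentalFamily}); for such a section, the idea is to express $d\big(\se x_\omega,\bd\om X\big)$ as an essential-infimum/supremum over the countable dense family $\{\se i(\se x^n)\}$ of explicitly Borel quantities, using that $i_\omega(\om X)$ is the set of points at which a certain ``displacement'' function $z\mapsto \lim_{r\to\infty}\big(\sup_{d(x^0_\omega,y)\le r}\!\big)$ detects finiteness — concretely, one identifies $\bd\om X$ inside $\overline{i_\omega(\om X)}$ via a Busemann-type criterion on the countable dense set and checks the resulting conditions are Borel in $\omega$.

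\medskip
\noindent The main obstacle will be Part~(ii): writing down a genuinely Borel formula for $\omega\mapsto d(\se x_\omega,\bd\om X)$, i.e.\ a fieldwise-Borel characterization of which horofunctions are Busemann functions. The cleanest route is probably to show directly that $\bd\se X=\overline{\se i(\se X)}\setminus\se i(\se X)$ is a Borel subfield by proving that $\se i(\se X)$ is a \emph{relatively open} Borel subfield of $\overline{\se i(\se X)}$ and then taking the complement; openness fieldwise comes from properness, and the Borelness of the complement of an open Borel subfield (whose complement is automatically closed) should follow from Lemma~\ref{LemmaSubfieldOpenSubsets} together with Proposition~\ref{PropDistanceBorel}, since $\omega\mapsto d(\se x_\omega,\se i(\om X))=\inf_n d\big(\se x_\omega,i_\omega(x^n_\omega)\big)$ is manifestly Borel and, by properness of $X_\omega$, equals $d(\se x_\omega,\bd\om X)$ precisely when $\se x_\omega\notin i_\omega(\om X)$, which lets us recover the distance to the closed complement on the Borel set where it is non-trivial.
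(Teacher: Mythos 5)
Part (i) of your proposal is correct and follows the paper's own route: one checks that $\se i(\se x)(\se y)=\dist{x}{y}-\dist{x}{x^0}$ is Borel for $\se x,\se y\in\callo X$, so $\se i(\se x)\in\mathcal L(\Omega,\C_0(\se X))$, whence $\se i$ is a continuous morphism and $\se i(\se X)$, $\overline{\se i(\se X)}$ are Borel subfields by Remarks \ref{RemFieldBetweenAndUnion} (2)--(3).

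Part (ii), however, has a genuine gap. Your ``cleanest route'' rests on the function $\omega\mapsto d\big(\se x_\omega,\se i(\om X)\big)=\inf_n d\big(\se x_\omega,i_\omega(x^n_\omega)\big)$, but $i_\omega(\om X)$ is \emph{dense} in $\overline{i_\omega(\om X)}$, so this function is identically zero and carries no information about $d\big(\se x_\omega,\bd\om X\big)$; in particular it does not ``recover the distance to the closed complement.'' More structurally, nothing in the paper (and nothing true in general) says that the closed complement of a relatively open Borel subfield is again a Borel subfield: Lemma \ref{LemmaSubfieldOpenSubsets} and Proposition \ref{PropDistanceBorel} go in the wrong direction for that, since to apply Proposition \ref{PropDistanceBorel} to $\bd\se X$ you would need exactly the Borel formula for $\omega\mapsto d(\se x_\omega,\bd\om X)$ that you acknowledge you cannot write down. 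Your first route (a ``Busemann-type criterion'' detected on a countable dense set) is left as an unfinished formula and is not an argument.

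The missing idea is a single identity that converts the boundary into a \emph{countable intersection of closures of explicit open Borel subfields}: for a proper CAT(0) space with base point $x_0$,
$$\bd X=\bigcap_{n\in\NN}\overline{i(X)\setminus i(\overline{B(x_0,n)})}.$$
Indeed each $i(x)$ is excluded from the $n$-th term as soon as $n>d(x,x_0)$ (using that $i(X)$ is open in $\overline{i(X)}$), while every boundary point is a limit of images of points leaving every bounded set. Fieldwise, $\se i(\se X)\setminus\se i(\overline{B(\se x^0,n)})$ is an open Borel subfield of $\se i(\se X)$, its closure is Borel by Remark \ref{RemFieldBetweenAndUnion} (3), and since $\overline{\se i(\se X)}$ is a field of compact (hence proper) spaces, Proposition \ref{PropProperIntersectionClosedField} applies to the countable intersection and yields that $\bd\se X$ is a Borel subfield of closed sets. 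This is the step your proposal does not supply.
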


\begin{proof}
(i) Since $\se x^0\in\callo{X}$ we observe that $\se i(\se x)(\se y)=\dist{x}{y}-\dist{x}{x^0}$ is Borel for every $\se y\in\callo X$. So $\se i(\se x)\in\calloconto$ and $\se i$ is a morphism which is obviously continuous. Consequently $\se i(\se X)$ is a Borel subfield of $\C_0(\se X)$ as well as $\overline{\se i(\se X)}$.
\smallskip

\noindent (ii) Observe that if $X$ is a proper CAT(0) space and $x_0\in X$ is a fixed base point we have the equality $\bd X=\bigcap_{n\in\mathbf{N}}\overline{i(X)\setminus i(\overline{B(x_0,n)})}$. We use this trick to show the assertion by using the Proposition \ref{PropProperIntersectionClosedField}. Since $\se i(\se X)\setminus\se i(\overline{B(\se x^0,n)})$ is a Borel subfield of open sets of $\se i(\se X)$ for all $n\in\mathbf{N}$, we obtain that $\bd\se X=\bigcap_{n\in\mathbf{N}}\overline{\se i(\se X)\setminus\se i(\overline{B(\se x^0,n)})}$ is also Borel.
\end{proof}

\begin{rem}\label{RemStructureBord}
In particular, the Theorem \ref{ThmStructBord} describes the sections of $\mathcal{L}(\Omega', \partial\se X)$, where $\Omega'$ is the base of the subfield $\bd\se X$ which is equal to $\omdansom{\om X\text{ is unbounded}}$. By definition of the Borel structure on $(\Omega',\C_0(\se X))$ the section $\se\xi\in\mathcal{S}(\Omega', \partial\se X)$ is Borel if and only if the function $b_{\se x^0,\se\xi}(\se x)$ is Borel for every $\se x\in\mathcal L(\Omega',\se X)$. Observe that this condition doesn't depend on the choice of $\se x^0\in \mathcal L(\Omega',\se X)$ because if $\se y^0\in\mathcal L(\Omega',\se X)$ we have $b_{\se y^0,\se\xi}(\se x)=b_{\se x^0,\se\xi}(\se x)-b_{\se x^0,\se\xi}(\se y^0)$. Therefore $\se\xi$ is Borel if and only if $b_{\se y,\se\xi}(\se x)$ is Borel for every $\se x,\se y\in\mathcal L(\Omega',\se X)$.
\end{rem}

\noindent The Borel structure on the field of boundaries is such that the natural sections and functions associated are Borel.

\begin{lem}\label{LemGeodRayBorel}
Let $(\Omega, \A)$ be a Borel space, $\ch{X}$ be a Borel field of proper unbounded CAT(0) spaces and two sections $\se x\in\callo{X}$, $\se\xi\in\mathcal{L}(\Omega, \partial\se X)$. Define
$$\begin{array}{cccl}
c_{\se x, \se\xi}:&[0,\infty[&\rightarrow&\calSe X\\
&t&\mapsto&c_{\se x, \se\xi}(t):\omega\mapsto c_{\om x, \om\xi}(t).
\end{array}
$$
where $c_{\om x, \om\xi}: [0,\infty[\rightarrow\om X$ is the unique geodesic ray such that $c_{\om x, \om\xi}(0)=\om x$ and $c_{\om x, \om\xi}(\infty)=\om\xi$. Then we have $c_{\se x, \se\xi}(t)\in\callo{X}$ for every $t\in\mathbf{R}_+$.
\end{lem}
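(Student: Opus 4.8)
The plan is to reduce the claim to the Borel structure on the field of boundaries, as described in Remark~\ref{RemStructureBord}, together with the characterization of $\mathcal{L}(\Omega,\C_0(\se X))$ from Theorem~\ref{ThmStrucBorelFctCont}. I would fix a fundamental family $\D=\{\se z^m\}_{m\ge1}$ of $\callo X$ and, by Lemma~\ref{LemmaBorelStructureFundamentalFamily}, it suffices to check that for each fixed $t\ge0$ the distance function $\omega\mapsto\om d\big(c_{\om x,\om\xi}(t),\om z^m\big)$ is Borel for every $m$.

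First I would express the point $c_{\om x,\om\xi}(t)$ via a monotone limit of nested closed convex subfields, so that Proposition~\ref{PropProperIntersectionClosedField} applies. The geodesic ray from $\om x$ toward $\om\xi$ satisfies $c_{\om x,\om\xi}(s)=\pi_{\overline B(\om x,s)}\big(c_{\om x,\om\xi}(s')\big)$ for $s\le s'$; more usefully, for each $s\ge t$ the point $c_{\om x,\om\xi}(t)$ is the projection onto $\overline B(\om x,t)$ of the point $c_{\om x,\om\xi}(s)$, and the latter points converge (as $s\to\infty$) to nothing in $\om X$, so instead I would work with the sublevel/horoball description: the Busemann function $b_{\om x,\om\xi}$ is Borel-controlled by Remark~\ref{RemStructureBord} (i.e.\ $b_{\se x,\se\xi}(\se z)$ is a Borel function of $\omega$ for every $\se z\in\callo X$), hence by Theorem~\ref{ThmStrucBorelFctCont} the section $\omega\mapsto b_{\om x,\om\xi}\in\C_0(\om X)$ (shifted to vanish at $\om x$) lies in $\mathcal{L}(\Omega,\C_0(\se X))$. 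Then $c_{\om x,\om\xi}(t)$ is the unique point at distance $t$ from $\om x$ on which $b_{\om x,\om\xi}$ takes the value $-t$; equivalently
$$
\{c_{\om x,\om\xi}(t)\}=\overline B\big(\om x,t\big)\cap b_{\om x,\om\xi}^{-1}\big(\{-t\}\big),
$$
since a ray is the unique geodesic realizing the Busemann decrease. The right-hand side is the intersection of the Borel subfield of closed balls $\overline B(\se x,t)$ (Example following Proposition~\ref{PropDistanceBorel}) with the subfield $b_{\se x,\se\xi}^{-1}(\{-t\})=b_{\se x,\se\xi}^{-1}([-t,-t])$, which is Borel by step~(2) in the proof of Proposition~\ref{PropProperIntersectionClosedField} applied to $\se f=b_{\se x,\se\xi}\in\mathcal{L}(\Omega,\C(\se X))$ and the constant Borel functions $\se a=\se b=-t$.

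By Proposition~\ref{PropProperIntersectionClosedField} this intersection is a Borel subfield, and since it is a singleton for every $\omega$ it is exactly the Borel section $c_{\se x,\se\xi}(t)$; thus $c_{\se x,\se\xi}(t)\in\callo X$ as claimed. I expect the main obstacle to be the verification that the singleton identity above is valid pointwise in a proper CAT(0) space — namely that the unique point $q$ with $\om d(\om x,q)=t$ and $b_{\om x,\om\xi}(q)=-t$ is indeed $c_{\om x,\om\xi}(t)$. This follows from the strict convexity of the distance in CAT(0) spaces together with the fact that $b_{\om x,\om\xi}$ is $1$-Lipschitz and decreases at unit rate precisely along the ray $c_{\om x,\om\xi}$ (so equality $b_{\om x,\om\xi}(q)-b_{\om x,\om\xi}(\om x)=-\om d(\om x,q)$ forces $q$ to lie on that ray, see \cite[II.8.2]{BH99}); I would spell this out as a short lemma if needed. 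The rest — Borelness of the ingredients — is a direct bookkeeping application of the results already established in the excerpt.
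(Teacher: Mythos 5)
Your argument is correct, but it takes a genuinely different route from the paper's. The paper approximates $\se\xi$ from inside the field: since $\se i(\D)$ is a fundamental family of $\overline{\se i(\se X)}$, there is a sequence $\{\se x^n\}_{n\ge1}\subseteq\D$ with $\se i(\se x^n)\rightarrow\se\xi$ pointwise and (after re-indexing) $\om d(\om x^n,\om x)\ge n$ for all $\omega$; then $\gamma_{\se x,\se x^n}(t)\in\callo X$ and, by \cite[II.8.19]{BH99}, $c_{\se x,\se\xi}(t)=\lim_{n\rightarrow\infty}\gamma_{\se x,\se x^n}(t)$ is a pointwise limit of Borel sections, hence Borel. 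You instead pin the point down by the exact identity $\{c_{\om x,\om\xi}(t)\}=\overline B(\om x,t)\cap b_{\om x,\om\xi}^{-1}(\{-t\})$ and run the closed-subfield machinery (Remark \ref{RemStructureBord} gives $b_{\se x,\se\xi}\in\mathcal L(\Omega,\C(\se X))$, step (2) of the proof of Proposition \ref{PropProperIntersectionClosedField} handles the level set, and the proposition itself the intersection); this is sound, and a subfield all of whose fibers are singletons is indeed a Borel section by Definition \ref{DefiBorelSubfield}(ii). The one piece you owe beyond bookkeeping is the singleton identity itself: the cleanest proof is by comparison triangles for $(\om x,q,c_{\om x,\om\xi}(s))$ as $s\rightarrow\infty$ --- if $\om d(\om x,q)=t$ and $b_{\om x,\om\xi}(q)=-t$, then $\om d(q,c_{\om x,\om\xi}(s))=s-t+\epsi_s$ with $\epsi_s\rightarrow0$, and the CAT(0) inequality places $q$ within $O(\sqrt{t\epsi_s})$ of a point $c_{\om x,\om\xi}(u_s)$ with $u_s\rightarrow t$, forcing $q=c_{\om x,\om\xi}(t)$ --- rather than the first-variation sketch you give (and \cite[II.8.2]{BH99} is not quite the right reference for it). On balance the paper's approximation argument is shorter because it needs no such rigidity lemma; yours avoids the convergence statement \cite[II.8.19]{BH99} and the $\omega$-uniform re-indexing of the approximating sequence, at the cost of one elementary geometric verification.
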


\begin{proof}
Let $\D\subseteq\callo{X}$ be a fundamental family. Since $\se i(\D)$ is also a fundamental family for the structure $\mathcal{L}(\Omega, \overline{\se i(\se X)})$ each Borel section in this set is a pointwise limit of countable Borel gluings of elements of $\se i(\D)$ by Lemma \ref{LemmaBorelStructureFundamentalFamily}. In particular there exists a sequence $\{\se x^n\}_{n\ge1}\subseteq\D$ such that $\lim_{n\rightarrow\infty}\se i(\se x^n)=\se\xi$ and we might as well suppose that $\om d(\om x^n, \om x)\ge n$ for all $\omega\in\Omega$. Consequently we can define $\gamma_{\se x, \se x^n}(t)\in\callo{X}$ at least for each $t\in[0,n]$. Now fix $t\in\mathbf{R}_+$. Since by \cite[II.8.19]{BH99} $\lim_{n\rightarrow\infty}\gamma_{\om x, \om x^n}(t)=c_{\om x, \om\xi}(t)$ for each $\omega\in\Omega$ we deduce $c_{\se x, \se\xi}(t)=\lim_{n\rightarrow\infty}\gamma_{\se x, \se x^n}(t)\in\callo{X}$.
\end{proof}

\noindent Recall that for every $\eta,\xi\in\bd X$ and $x\in X$, the Alexandrov angle between $\xi$ and $\eta$ in $x$ is defined by $\angle_x(\xi,\eta)=\angle_{x}(c_{x, \xi}(1),c_{x,\eta}(1))$ and the Tits angle between $\xi$ and $\eta$ by $\angle(\xi,\eta)=\sup_{x\in X}\angle_x(\xi,\eta)$. The Tits angle define a metric on the boundary which is called the \textit{angular metric}.

\begin{lem}\label{LemmaAngleBorel}
Let $(\Omega, \A)$ be a Borel space, $\ch{X}$ be a Borel field of proper CAT(0) spaces, a section $\se x\in\callo{X}$ and two sections $\se\xi, \se\eta\in\mathcal{L}(\Omega, \partial\se X)$.
\begin{enumerate}
\item[(i)] The Alexandrov angle function $\angle_{\se x}(\se \xi, \se \eta)$ is Borel.
\item[(iii)] The Tits angle function $\se\angle(\se\xi, \se\eta)$ is Borel.
\end{enumerate}
\end{lem}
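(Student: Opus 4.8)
The plan is to deduce both assertions from Lemmas \ref{LemGeodRayBorel} and \ref{LemAnglesInsideBorel}. One works over the base $\Omega':=\omdansom{\om X\text{ is unbounded}}$ of $\bd\se X$, so that the sections $\se\xi,\se\eta$ and the rays $c_{\se x,\se\xi}$ of Lemma \ref{LemGeodRayBorel} are defined; on the complement $\bd\om X=\emptyset$ and there is nothing to prove.

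For (i), I would unwind the definition $\angle_{\om x}(\om\xi,\om\eta)=\angle_{\om x}\big(c_{\om x,\om\xi}(1),c_{\om x,\om\eta}(1)\big)$. By Lemma \ref{LemGeodRayBorel} the sections $c_{\se x,\se\xi}(1)$ and $c_{\se x,\se\eta}(1)$ lie in $\callo X$, and both are at distance $1$ from $\om x$, so that $c_{\om x,\om\xi}(1)\neq\om x\neq c_{\om x,\om\eta}(1)$ for every $\omega$. Lemma \ref{LemAnglesInsideBorel}(ii), applied with the vertex $\se x$ and these two sections, then yields at once that $\omega\mapsto\angle_{\om x}(\om\xi,\om\eta)$ is Borel.

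For (iii), the tempting route — writing $\angle(\om\xi,\om\eta)=\sup_{x\in\om X}\angle_x(\om\xi,\om\eta)$ and taking a supremum over a fundamental family, in the spirit of Lemma \ref{LemCentresBorel} — does \emph{not} work: the function $x\mapsto\angle_x(\om\xi,\om\eta)$ on $\om X$ need not be lower semicontinuous, so its supremum over a dense countable subset can be strictly smaller than $\angle(\om\xi,\om\eta)$ (already in a tree, where this function equals $\pi$ on the bi-infinite geodesic $[\om\xi,\om\eta]$ and vanishes off it). Instead I would use the asymptotic description of the Tits angle: for any $\se x\in\callo X$ the function $t\mapsto\tfrac1t\,\om d\big(c_{\om x,\om\xi}(t),c_{\om x,\om\eta}(t)\big)$ is non-decreasing, so that
$$\angle(\om\xi,\om\eta)=2\arcsin\Big(\tfrac12\sup_{n\ge1}\tfrac1n\,\om d\big(c_{\om x,\om\xi}(n),c_{\om x,\om\eta}(n)\big)\Big)$$
(this is classical; see \cite[II.9]{BH99} or \cite{Bal95}). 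Taking $\se x=\se x^0$, Lemma \ref{LemGeodRayBorel} gives $c_{\se x^0,\se\xi}(n),c_{\se x^0,\se\eta}(n)\in\callo X$ for every $n\ge1$; hence $\omega\mapsto\tfrac1n\,\om d\big(c_{\om x^0,\om\xi}(n),c_{\om x^0,\om\eta}(n)\big)$ is Borel, its countable supremum over $n$ is a Borel function with values in $[0,2]$ (by the triangle inequality at $\om x^0$), and composing with the continuous map $s\mapsto 2\arcsin(s/2)$ shows that $\se\angle(\se\xi,\se\eta)$ is Borel.

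The main obstacle — the one genuinely delicate point — is the observation just made: the ``supremum over a fundamental family'' device (used for instance in Lemma \ref{LemCentresBorel}) is inadequate for the Tits angle, and one must instead route the argument through the monotone asymptotic-distance formula. Granting that, the remainder is a routine combination of Lemmas \ref{LemGeodRayBorel} and \ref{LemAnglesInsideBorel} with the stability of Borel functions under countable suprema and under composition with continuous maps.
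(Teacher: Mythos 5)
Your proof is correct and follows essentially the same route as the paper: part (i) is exactly the reduction $\angle_{\se x}(\se\xi,\se\eta)=\angle_{\se x}(c_{\se x,\se\xi}(1),c_{\se x,\se\eta}(1))$ combined with Lemmas \ref{LemGeodRayBorel} and \ref{LemAnglesInsideBorel}, and part (iii) uses the same asymptotic formula $\se\angle(\se\xi,\se\eta)=2\arcsin\big(\lim_{t\to\infty}\tfrac{1}{2t}d(c_{\se x,\se\xi}(t),c_{\se x,\se\eta}(t))\big)$ from \cite[II.9.8(4)]{BH99}, which the paper invokes directly and you merely make explicit as a countable monotone supremum. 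Your added warning about the failure of the ``supremum over a fundamental family'' device is a sensible side remark but not a divergence from the paper's argument.
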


\begin{proof}
By definition we have ${\angle}_{\se x}(\se{\xi},\se\eta)=\angle_{\se x}(c_{\se x, \se\xi}(1),c_{\se x, \se\eta}(1))$ and by \cite[II.9.8(4)]{BH99} $\se{\angle}(\se{\xi},\se\eta)=2\arcsin(\lim_{t\tend\infty} \frac{1}{2t}\cdot d(c_{\se x, \se\xi}(t),c_{\se
x, \se\eta}(t)))$. So we deduce from lemmas \ref{LemAnglesInsideBorel} and \ref{LemGeodRayBorel} that these functions are Borel.
\end{proof}

\noindent We turn now to some subfields of the field of metric spaces $(\Omega, (\partial\se X, \se\angle))$. Notice that the latter is not always a Borel field of metric spaces because the topology induced by the angular metric may be not separable. Despite this trouble we prove the following theorem.

\begin{thm}\label{ThmCircum}
Let $(\Omega, \A)$ be a Borel space, $\ch{X}$ be a Borel field of proper unbounded CAT(0) spaces and $\ch{A}$ be a Borel subfield of non empty closed sets - with respect to the conic topology - of $(\Omega, \partial\se X)$.
\begin{enumerate}
\item[(i)] The circumradius - with respect to the angular metric - function $r(\se A)$ is Borel.
\end{enumerate}
Moreover suppose that $r(\om A)<\tfrac{\pi}{2}$ for all $\omega\in\Omega$.
\begin{enumerate}
\item[(ii)] The section of circumcenters is Borel, \textit{i.e.} $c_{\se A}\in\mathcal{L}(\Omega,\partial\se X)$.
\end{enumerate}
\end{thm}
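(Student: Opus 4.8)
The plan is to reduce the statement about circumradius and circumcenters in the boundary (with respect to the angular metric) to the statements we already have about circumradius and circumcenters in the CAT(0) spaces themselves (Lemma \ref{LemCentresBorel}), by transporting everything to the fields of Busemann functions. Recall that $\se A$ is a Borel subfield of $(\Omega,\partial\se X)$, and $\partial\se X$ sits inside $\overline{\se i(\se X)}\subseteq\C_0(\se X)$; for each $\xi\in\partial X$ the corresponding Busemann function $b_{x_0,\xi}$ is exactly the element $i(X)$ limits to. The key point connecting the two worlds is the classical formula relating Tits distance on $\partial X$ to distances between Busemann functions restricted to balls, or — more directly useful here — the characterization of the angular metric via the asymptotic behaviour of geodesic rays: $\se\angle(\se\xi,\se\eta)=2\arcsin\big(\lim_{t\to\infty}\tfrac{1}{2t}d(c_{\se x,\se\xi}(t),c_{\se x,\se\eta}(t))\big)$, which is Borel in $(\se\xi,\se\eta)$ by Lemma \ref{LemmaAngleBorel}.

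First I would prove (i). Fix a fundamental family $\D'=\{\se\xi^n\}_{n\ge1}$ of the Borel subfield $\se A$ (so that $\{\om\xi^n\}_{n\ge1}$ is conic-dense in $\om A$ for every $\omega$). I claim that the circumradius of $\om A$ for the angular metric equals $r(\om A)=\inf_{\eta\in\partial X}\sup_{n\ge1}\se\angle(\se\eta,\se\xi^n)_\omega$. The $\sup$ over the dense subset suffices because the Tits angle is lower semicontinuous with respect to the conic topology (this is standard, cf. \cite[II.9]{BH99}), so $\sup_n\angle(\eta,\xi^n)=\sup_{\xi\in A}\angle(\eta,\xi)=\rad_\eta(A)$ for every $\eta$. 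Now the outer infimum can likewise be replaced by an infimum over a countable conic-dense set of potential centers — but here I must be slightly careful: the infimizing $\eta$ need not lie in $\se A$. However, a ball of radius $<\pi/2$ in the Tits metric is contained in the geodesic, convex hull considerations of Balser–Lytchak / Caprace–Monod; for part (i) we do not need $r<\pi/2$, and any conic-dense family in $\om X$ (e.g. the images $\se i(\se x^j)$ of a fundamental family of $\se X$, which are conic-dense in $\overline{\se i(\se X)}$ hence accumulate on all of $\partial\se X$) gives, via Lemma \ref{LemGeodRayBorel}, Borel sections approximating any boundary point, so $r(\se A)=\inf_j\sup_n\se\angle(\se i(\se x^j),\se\xi^n)$ is a countable inf of countable sups of Borel functions, hence Borel. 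The one subtlety is showing this double countable expression really computes the circumradius and not something larger; this follows from continuity/semicontinuity of $\se\angle$ along conic-convergent sequences, which is where I expect to have to be most careful.

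For (ii), under the hypothesis $r(\om A)<\pi/2$ I would invoke the known fact (Balser–Lytchak, or \cite[II.9]{BH99}-type arguments) that a subset of $\partial X$ of circumradius $<\pi/2$ has a unique circumcenter $c_A$, and that $c_A$ is characterized as the unique point realizing the infimum above. Uniqueness plus Borelness of all the data then forces $c_{\se A}$ to be a Borel section: concretely, using the fundamental family $\{\se i(\se x^j)\}$ one can pick for each $\omega$ the "first" index realizing $\sup_n\angle(\cdot,\xi^n)$ within $r(\om A)+1/k$, glue as in the proof of Lemma \ref{LemmaBorelGluingPoncutalLimit}, and pass to the pointwise limit, which converges to $c_{\om A}$ by uniqueness of the circumcenter together with the fact that the functional $\eta\mapsto\rad_\eta(A)$ has a strict minimum there. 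Closure of $\mathcal{L}(\Omega,\overline{\se i(\se X)})$ (hence of $\mathcal{L}(\Omega,\partial\se X)$, since $\partial\se X$ is a Borel subfield of closed sets by Theorem \ref{ThmStructBord}) under countable Borel gluings and pointwise limits then gives $c_{\se A}\in\mathcal{L}(\Omega,\partial\se X)$.

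The main obstacle, I expect, is the measurable selection of the circumcenter: one must turn the abstract uniqueness of $c_A$ into an honest Borel section, and the cleanest way is to build an explicit approximating sequence from the countable fundamental data and prove pointwise convergence to $c_{\om A}$ using a quantitative version of "strict minimum" — i.e. that if $\rad_\eta(A)$ is within $\epsilon$ of $r(A)$ then $\angle(\eta,c_A)$ is small. This quantitative estimate is exactly the convexity/uniqueness input from CAT(1)-geometry of the boundary of radius $<\pi/2$; granting it, the Borel bookkeeping is then routine and parallels the proof of Lemma \ref{LemCentresBorel}.
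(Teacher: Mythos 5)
There is a genuine gap, and it sits exactly where you said you expected to have to be most careful: the replacement of the outer infimum $\inf_{\eta\in\partial X_\omega}\sup_{\xi\in A_\omega}\angle(\eta,\xi)$ by an infimum over a countable, \emph{conically} dense family of boundary sections. The function $\eta\mapsto\sup_{\xi\in A}\angle(\eta,\xi)$ is only lower semicontinuous for the conic topology (the Tits angle is lsc, not continuous), and the infimum of a lower semicontinuous function is not determined by its values on a dense set. Concretely, let $X$ be a regular tree, so that $\partial X$ is a Cantor set for the conic topology while $\angle(\xi,\eta)=\pi$ for all $\xi\neq\eta$, and take $A=\{\xi_0\}$ a single Borel section. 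Then $r(A)=0$, but for every $\eta\neq\xi_0$ one has $\sup_{\xi\in A}\angle(\eta,\xi)=\pi$, so any countable conically dense family missing $\xi_0$ computes $\pi$ instead of $0$. (Your specific candidate family $\{\se i(\se x^j)\}$ is worse still: these are interior points, for which the Tits angle to a boundary point is not defined.) The same problem propagates to (ii): there may be no member of your countable family whose radius function comes within $1/k$ of $r(\om A)$, so the approximate-minimizer-plus-gluing scheme has nothing to select. The non-separability of $(\partial X,\angle)$, which the paper flags just before this theorem, is precisely why no a priori countable family of candidate centers can work with the Tits angle itself. The inner supremum, by contrast, is fine: lower semicontinuity does give $\sup_{\xi\in A}\angle(\eta,\xi)=\sup_n\angle(\eta,\xi^n)$ for a conically dense $\{\xi^n\}\subseteq A$, as you argue.

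The paper repairs exactly this point by replacing the Tits angle with the increasing sequence of \emph{conically continuous} functions $\angle^n(\xi,\eta)=\sup_{t\in[1,n]}\overline{\angle}_{x_0}\bigl(c_{x_0,\xi}(t),c_{x_0,\eta}(t)\bigr)$, which converge pointwise upward to $\angle(\xi,\eta)$. For each fixed $n$ the function $\xi\mapsto\max_{\eta\in A}\angle^n(\xi,\eta)$ is continuous on the compact space $\partial X$ with the conic topology, so its minimum \emph{is} computed by an infimum over any conically dense countable family; a Dini-type observation ($\min g=\lim_n\min g_n$ for an increasing sequence of continuous functions on a compact space, and accumulation points of minimizers of $g_n$ minimize $g$) then recovers $r(A)$ in the limit and, for (ii), produces a sequence of Borel sections of exact minimizers of the $g_n$ converging pointwise to the unique circumcenter. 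If you want to salvage your outline, this two-parameter approximation (first truncating in $t$, then taking the countable infimum over candidate centers) is the missing ingredient; granted it, your Borel bookkeeping for (ii) goes through essentially as in the paper.
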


\begin{proof}
We'll make the proof of (i) in three steps.
\smallskip

\noindent (1) For a CAT(0) space $X$ and $x_0\in X$ define for each $n\in\mathbf{N}$ the function
$$\begin{array}{cccl}
\angle^n:&\bd X\times\bd X&\rightarrow&[0,\pi]\\
&(\xi,\eta)&\mapsto&\angle^n(\xi,\eta):=\displaystyle\sup_{t\in [1,n]}\overline{\angle}_{x_0}(c_{x_0,\xi}(t),c_{x_0,\eta}(t)).\end{array}$$
This increasing sequence of functions verifies $\angle(\xi,\eta)=\lim_{n\rightarrow\infty}\angle^n(\xi,\eta)=\sup_{n\ge1}\angle^n(\xi,\eta)$ by \cite[II.9.8(1)]{BH99}. If $\partial X$ is endowed with the cone topology, then $\angle^n$ is a continuous function. The argument is as follows. The function $f_n:[1,n]\times(\bd X)^2\rightarrow [0,\pi]$, $(t,\xi,\eta)\mapsto \overline{\angle}_{x_0}(c_{x_0,\xi}(t),c_{x_0,\eta}(t))$ is continuous, hence uniformly continuous. It is easy then to check that the function $(\xi, \eta)\mapsto\angle^n(\xi,\eta)=\sup_{t\in[1,n]}f^n(t,\xi,\eta)$ is continuous.
\smallskip

\noindent (2) We'll now prove that if $A\subseteq\bd X$ is a non empty closed subset, then we have the equality
\begin{equation}\label{LemExpressionRayon}
r(A)=\lim_{n\rightarrow\infty}\min_{\xi\in\partial X}\{\max_{\eta\in A}\angle^n(\xi,\eta)\}.
\end{equation}
Indeed we have
\begin{eqnarray*}
r(A)&\overset{\text{def.}}{=}&\inf_{\xi\in\bd X}\{\sup_{\eta\in A}\{\sup_{x\in X}\angle_x(\xi,\eta)\}\}=\inf_{\xi\in\bd X}\{\sup_{\eta\in A}\{\sup_{n\ge1}\angle^n(\xi,\eta)\}\}=\inf_{\xi\in\bd X}\{\sup_{n\ge1}\{\sup_{\eta\in A}\angle^n(\xi,\eta)\}\}
\end{eqnarray*}
and by (1) and compactness $r(A)=\inf_{\xi\in\bd X}\{\sup_{n\ge1}\{\max_{\eta\in A}\angle^n(\xi,\eta)\}\}$. Now consider the restriction $\angle^n\mid_{\partial X\times A}$ which is uniformly continuous by (1) and also the function $\angle^n_{\max}: \partial X\rightarrow[0,\pi]$, $\xi\mapsto\max_{\eta\in A}\{\angle^n(\xi,\eta)\}$ which is continuous. We'll use the following observation whose proof is not a very difficult exercise.
\smallskip

\noindent \textit{Observation :} Let $Y$ be a compact metrizable space and $\{g_n:Y\rightarrow[a,b]\}_{n\ge1}$ be an increasing sequence of continuous functions. Taking punctual limit gives a function $g$ which is obviously lower semi-continuous. Then $\min g=\lim_{n\tend\infty}\min g_n$. Moreover if $x_n$ is such that $g_n(x_n)=\min g_n$ then any accumulation point $x$ satisfies $g(x)=\min f$.

\noindent We can now easily deduce the formula (\ref{LemExpressionRayon}) by applying the first assertion of the observation to the sequence of functions $g_n(\xi):=\angle^n_{\max}(\xi)$, since $r(A)=\min g$.
\smallskip

\noindent (3) Finally, if $\se\xi, \se\eta\in\mathcal{L}(\Omega,\bd\se X)$, then the function $\se\angle^n(\se\xi,\se\eta)$ is Borel for every $n\ge1$ because on one hand we have by continuity
$$\se\angle^n(\se\xi,\se\eta)=\sup_{t\in[1,n]\cap\mathbf{Q}}\overline{\angle}_{\se x^0}(c_{\se x^0, \se\xi}(t), c_{\se x^0, \se\eta}(t))$$
and on the other hand since $c_{\se x^0, \se\xi}(t), c_{\se x^0, \se\eta}(t)\in\callo{X}$ the function $\overline{\angle}_{\se x^0}(c_{\se x^0,\se\xi}(t), c_{\se x^0, \se\eta}(t))$ is Borel by Lemma \ref{LemAnglesInsideBorel}. Consequently, if $\D\subseteq\mathcal{L}^0(\Omega, \bd\se X)$ and $\D'\subseteq\callo A$ are fundamental families, then we have
$$\rad(\se A)=\lim_{n\tend\infty}\min_{\se\xi\in\D}\{\max_{\se\eta\in \D'}\se\angle^n(\se\xi,\se\eta)\}$$
and this shows that the function is Borel.
\smallskip

\noindent We now undertake the proof of (ii). We'll also make this proof in three steps.
\smallskip

\noindent (1) By hypothesis for each $\omega\in\Omega$, the set $\om A$ has an unique circumcenter $c_{\om A}$ \cite[II.9.13 \& II.2.7]{BH99}. Therefore the section $c_{\se A}\in\mathcal{S}(\Omega,\partial\se X)$ is well-defined.
\smallskip

\noindent (2) Observe the following general fact. If $\ch Y$ is a Borel field of compact spaces and $\{\se f^n\}_{n\ge1}\subseteq\mathcal L(\Omega,\C(\se Y))$ is a sequence of continuous morphisms which is increasing (\textit{i.e.} for each $\omega\in\Omega$, $n\ge1$ it satisfies $\om f^{n+1}\ge\om f^n$), bounded (\textit{i.e.} for each $n\ge1$ it satisfies $\sup_{n\ge1}\om f^n<\infty$) and such that $\se f:=\lim_{n\rightarrow\infty}\se f^n$ satisfies $|\om f^{-1}(\{\min \om f\})|=1$ for all $\omega\in \Omega$, then $\se f^{-1}(\{\min \se f\})\in$ $\callo Y$. Indeed if $\D\subseteq\callo Y$ is a fundamental family we have $\min\se f^n=\inf_{\se x\in\D}\se f(\se x)\in\mathcal{L}(\Omega, \mathbf{R})$ and therefore $(\se f^n)^{-1}(\{\min \se f^n\})$ is a Borel field of closed subsets - see step 2 of the proof of Proposition \ref{PropProperIntersectionClosedField}. Consequently we can pick a Borel section $\se x^n$ in it and - by the observation made in the second step of (i) - $\se f^{-1}(\{\min \se f\})=\lim_{n\rightarrow\infty}\se x^n$ is Borel.
\smallskip

\noindent (3) Let $\D\subseteq\mathcal{L}(\Omega, \bd\se X)$ and $\D'\subseteq\callo A$ be fundamental families. We have seen that
$$\rad(\se A)=\lim_{n\tend\infty}\{\min_{\se\xi\in\D}\{\max_{\se\eta\in \D'}\se\angle^n(\se\xi,\se\eta)\}\}=\min_{\se\xi\in\D}\{\lim_{n\tend\infty}\{\max_{\se\eta\in\D'}\se\angle^n(\se\xi,\se\eta)\}\}.$$
Define $\se g^n(\se\xi):=\max_{\se\eta\in\D'}\se\angle^n(\se\xi,\se\eta)$. We have $\se g^n\in\mathcal L(\Omega,\C(\bd\se X))$ and observe that the sequence of morphisms $\{\se g^n\}_{n\ge1}$ is increasing, bounded and that
$$\se g:=\lim_{n\tend\infty}\se g^n\in\widetilde{\mathcal{L}}(\Omega, \F(\bd\se X, \RR))$$
is such that
$$\om g^{-1}(\{\min(\om g)\})=\{c_{\om A}\}\quad\text{for each }\omega\in\Omega$$
because $\min\om g=r(\om A)$ and $c_{\om A}$ is unique. Therefore by step (2) we obtain $c_{\se A}\in\callo{\bd X}$.
\end{proof}

\subsection{Limit Sets at Infinity}

The goal of this section is to associate a canonical Borel section $\se\xi\in\mathcal{L}(\Omega, \partial \se X)$ to a decreasing sequence $\{\se C^n\}_{n\ge1}$ of Borel subfields of convex, closed, non empty subsets in a field of proper CAT(0) spaces which satisfies the hypothesis of ``finite covering dimension''. The section we are looking for is obtained by considering the circumcenter of the Borel field of limit sets at infinity.

\begin{defi}
Let $X$ be a proper CAT(0) space and $\{C_n\}_{n\ge1}$ be a decreasing sequence of convex, closed, non empty subsets such that $\cap_{n\ge1} C_n=\emptyset$. Since the space is proper this assumption is equivalent to the fact that $\lim_{n\rightarrow\infty}d(x,\pi_{C_n}(x))=\infty$ for every $x\in X$. For $x\in X$, we consider
$$L:=\overline{\{\pi_{C_n}(x)\}_{n\ge1}}\cap\partial X=\{\text{accumulation points of }\{\pi_{C_n}(x)\}_{n\ge1}\}$$
where the closure is taken relatively to the conic topology on $\overline{X}$. Since the projection on a convex sets does not increase the distance, this set is independent of the choose point $x$. We call this set $L$ the limit set at infinity of the given sequence of subfields.
\end{defi}

\noindent First we show that this definition is independent of the choice of $x\in X$.

\begin{lem}\label{LemDefLimitSet}
Let $X$ be a proper CAT(0) space and $\{C_n\}_{n\ge1}$ like above. Then we have $\diam L_x\le\pi/2$ with respect to the angular metric.
\end{lem}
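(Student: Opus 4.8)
The plan is to show that any two accumulation points $\xi,\eta\in L_x$ of the sequence $\{\pi_{C_n}(x)\}_{n\ge1}$ satisfy $\angle(\xi,\eta)\le\pi/2$, where $\angle$ denotes the Tits (angular) metric. The basic geometric input is the standard CAT(0) fact that if $C$ is a closed convex subset and $p=\pi_C(x)$, then for every point $c\in C$ the Alexandrov angle at $p$ between $x$ and $c$ is at least $\pi/2$ (see \cite[II.2.4]{BH99}). I would apply this repeatedly along the nested sequence: since the sequence $\{C_n\}$ is decreasing, for $m>n$ the point $\pi_{C_m}(x)$ lies in $C_n$, so $\angle_{\pi_{C_n}(x)}(x,\pi_{C_m}(x))\ge\pi/2$.

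Next I would pass to the limit. Fix the base point $x$ and write $p_n:=\pi_{C_n}(x)$. Suppose $\xi=\lim_{k\to\infty}p_{n_k}(\infty)$ and $\eta=\lim_{j\to\infty}p_{m_j}(\infty)$ in the conic topology; after passing to subsequences we may assume $m_j>n_k$ in a suitably interleaved fashion. The comparison angle $\overline\angle_x(p_{n_k},p_{m_j})$ can be compared, using the law of cosines in the Euclidean comparison triangle $\triangle(x,p_{n_k},p_{m_j})$, with the Alexandrov angle at $p_{n_k}$: since $d(x,p_{n_k})\to\infty$ and $d(x,p_{m_j})\to\infty$ while, crucially, the angle at $p_{n_k}$ between $x$ and $p_{m_j}$ is $\ge\pi/2$, the comparison triangle has an obtuse (or right) angle at the vertex corresponding to $p_{n_k}$, which forces $\overline\angle_x(p_{n_k},p_{m_j})\le\pi/2$. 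Letting $k,j\to\infty$ and using the characterization of the Tits angle as $\angle(\xi,\eta)=\lim_{t\to\infty}\overline\angle_x\big(c_{x,\xi}(t),c_{x,\eta}(t)\big)$ together with the conic convergence $p_{n_k}(\infty)\to\xi$ (so that the geodesic segments $[x,p_{n_k}]$ converge to the ray $c_{x,\xi}$ on compact sets, and an Arzelà--Ascoli/diagonal argument controls the angles), we obtain $\angle(\xi,\eta)\le\pi/2$. Since $\xi,\eta$ were arbitrary accumulation points, $\diam L_x\le\pi/2$.

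The main obstacle I anticipate is the limiting argument making the angle bound pass from the finite-stage points $p_n$ to the boundary points $\xi,\eta$: one must be careful that conic convergence of $p_{n_k}(\infty)$ to $\xi$ does not a priori control Tits angles (the Tits metric is only lower semicontinuous with respect to the conic topology), so the inequality must be established at the level of comparison angles along the actual rays and only then pushed to the Tits angle via its definition as a supremum/limit of comparison angles. Concretely, the cleanest route is: for fixed large $t$, approximate $c_{x,\xi}(t)$ and $c_{x,\eta}(t)$ by points on the segments $[x,p_{n_k}]$ and $[x,p_{m_j}]$ (which converge to them), bound $\overline\angle_x$ of the approximants using the obtuse-angle-at-$p_{n_k}$ argument above, pass to the limit in $k,j$ to get $\overline\angle_x(c_{x,\xi}(t),c_{x,\eta}(t))\le\pi/2$ for all $t$, and finally let $t\to\infty$. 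A minor technical point to handle is the degenerate case where some $p_{n_k}=x$ or where two of the points coincide, which is harmless and can be dispatched directly. Everything here takes place in a single fixed proper CAT(0) space $X$, so no Borel-field machinery is needed for this lemma; the Borel statement will come afterwards from Lemma~\ref{LemGeodRayBorel} and Theorem~\ref{ThmCircum}.
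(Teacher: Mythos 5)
Your argument is correct and follows essentially the same route as the paper: the key input in both is that $\angle_{\pi_{C_n}(x)}(x,\pi_{C_m}(x))\ge\pi/2$ for $m>n$ (by \cite[II.2.4]{BH99}), whence the comparison angle at $x$ is $\le\pi/2$, and then one passes to the limit. The only difference is that the paper dispatches your ``main obstacle'' by directly citing the lower semicontinuity of the Tits angle under conic convergence, $\angle(\xi,\zeta)\le\liminf_k\overline\angle_x(\pi_{C_{n_k}}(x),\pi_{C_{m_k}}(x))$ \cite[Lem. II.9.16]{BH99}, whereas you re-derive that lemma via monotonicity of comparison angles along the rays.
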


\begin{proof} By \cite[Prop. II.2.4 (3), p. 177]{BH99} we have
$$\angle_{\pi_{C_n}(x)}(x,\pi_{C_m}(x))\ge\pi/2$$
if $C_m\subsetneqq C_n$ and $n$ is large enough so that $x\notin C_n$. In particular we have $\overline\angle_{\pi_{C_n}(x)}(x,\pi_{C_m}(x))\ge\pi/2$ and thus
$$\overline\angle_{x}(\pi_{C_n}(x),\pi_{C_m}(x))\le\pi/2\quad\text{ if
}m>n\text{ are large enough so that }x\notin C_n\ \text{and}\ C_m\subsetneqq C_n.$$
Consider now any $\xi,\zeta\in L_x\subseteq\bd X$ as well as two subsequences
$\{\pi_{C_{n_k}}(x)\}_{k\ge1}$ and $\{\pi_{C_{m_k}}(x)\}_{k\ge1}$ such that $\pi_{C_{n_k}}(x)\tend\xi$ and
$\pi_{C_{m_k}}(x)\tend\zeta$ for $k\tend\infty$. By \cite[Lem. II.9.16, p. 286]{BH99} we conclude that
$$\angle(\xi,\zeta)\le\liminf_{k\tend\infty}\overline\angle_x(\pi_{C_{n_k}}(x),\pi_{C_{m_k}}(x))\le\pi/2$$
if $m_k>n_k$ are like above.
\end{proof}

\noindent The topological condition on $X$ needed to ensure the uniqueness of the circumcenter of a limit set at infinity is the following.

\begin{defi}
The order of a family $\mathcal{E}$ of subsets of a set $X$ is the largest integer $n$ such that the family $\mathcal{E}$ contains $n+1$ subsets with non empty intersection or $\infty$ if no such integer exists. If $X$ is a metrizable space it is possible to define the covering dimension (also called $\check C$ech-Lebesgue dimension) $\dim(X)$ by the following three steps:
\begin{enumerate}
\item[1.] $\dim(X)\le n$ if every finite open cover of $X$ has a finite open refinement of order $\le n$.
\item[2.] $\dim(X)=n$ if $\dim(X)\le n$ and the inequality $\dim(X)\le n-1$ does not hold.
\item[3.] $\dim(X)=\infty$ if the inequality $\dim(X)\le n$ does not hold for any $n$.
\end{enumerate}
We also define $\dim_C(X):=\sup\{\dim(K)\mid K\subseteq X\text{ compact}\}$ and refer to \cite[Chap. 7]{Eng89} for the properties of covering dimension and some equivalent definitions.

\end{defi}

\begin{rem}
Some authors refer to $\dim_C(X)$ like the geometric dimension of $X$ (see \textit{e.g.} \cite{CL10}). Note that a CAT(0) space $X$ such that $\dim_C(X)=0$ is a singleton and if it satisfies $\dim_C(X)=1$ it is a $\mathbf{R}$-tree.
\end{rem}

\begin{thm}[\cite{FNS06}, Thm. 1.7 \& Prop. 1.8, p. 309]\label{ThmFNS}
\quad\\[-0.4cm]

\noindent (i) If $X$ is a proper CAT(0) space, then the inequality $\dim_{C}(\partial X, \angle)\le\dim(X)-1$ holds.
\smallskip

\noindent (ii) If $Y$ is a complete CAT(1) space such that $\dim_C(Y)<\infty$ and $\diam(Y)\le \pi/2$, then there exists a constant $\delta>0$ which only depends on $\dim_C(Y)$ and such that the inequality $\rad(Y)\le\pi/2-\delta<\pi/2$ holds. In particular there exists an unique circumcenter $c_Y$ for $Y$ \cite[Prop. II.2.7, p. 179]{BH99}.
\end{thm}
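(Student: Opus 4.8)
The plan is to quote \cite[Thm.~1.7 \& Prop.~1.8]{FNS06}, where both inequalities are established; the uniqueness of the circumcenter in the conclusion is then the already-cited \cite[II.2.7]{BH99}, applied once $\rad(Y)<\pi/2$ is known, since in a CAT(1) space the distance functions are convex on balls of radius $<\pi/2$ and the usual CAT(0) circumcenter argument carries over. For orientation, here is how I would organise the two main points.

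For (i), I would first dispose of the conic-topology version, which is elementary. Fixing $x_0\in X$, the map $\xi\mapsto c_{x_0,\xi}(R)$ is a homeomorphism from $(\partial X,\text{conic})$ onto the sphere $S(x_0,R)$; moreover $\overline{B}(x_0,R)$ is swept out by the geodesics $[x_0,y]$, $y\in S(x_0,R)$, so the canonical map from the topological cone on $S(x_0,R)$ to $\overline{B}(x_0,R)$ is a continuous bijection of a compact space onto a Hausdorff one, hence a homeomorphism. Therefore $\dim S(x_0,R)=\dim\overline{B}(x_0,R)-1\le\dim(X)-1$ by monotonicity of covering dimension on subsets of the metrizable space $X$, and the same bound holds for $\partial X$ with the conic topology. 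The real content is to pass from the conic topology to the angular metric, whose topology on $\partial X$ is strictly finer; as covering dimension is not monotone under refinement of the topology, this does not come for free. The mechanism is that for any compact $K\subseteq(\partial X,\angle)$ one has, by Dini's theorem, $\tfrac{1}{R}d(c_{x_0,\xi}(R),c_{x_0,\eta}(R))\to 2\sin(\angle(\xi,\eta)/2)$ uniformly on $K\times K$, so $(K,\angle)$ is a tightly controlled limit of subsets of the rescaled spheres $(\tfrac{1}{R}S(x_0,R),\tfrac{1}{R}d)$, each of covering dimension $\le\dim(X)-1$; the delicate point, and the heart of \cite{FNS06} (building on Kleiner's dimension theory), is that in exactly this limiting situation the covering dimension cannot increase, which gives $\dim_C(\partial X,\angle)\le\dim(X)-1$.

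For (ii), I would argue by contraposition: it suffices to produce, for each $n$, a $\delta_n>0$ with $\dim_C(Y)\le n\Rightarrow\rad(Y)\le\pi/2-\delta_n$. This is a CAT(1) analogue of Jung's inequality --- the extremal configuration is the positive orthant $\{x\in\mathbb{S}^n:x_i\ge0\}$, of diameter $\pi/2$ and radius $\arccos(1/\sqrt{n+1})$, so one expects $\delta_n\approx\arcsin(1/\sqrt{n+1})$. Since $\diam(Y)\le\pi/2$, the function $c\mapsto\sup_{y\in Y}d(c,y)$ takes values in $[\rad(Y),\pi/2]$, so if $\rad(Y)$ were very close to $\pi/2$ then every point would be almost farthest from some point; starting from this I would run a recursion --- select an almost-farthest pair, pass to the subspace of points almost equidistant, at distance $\approx\pi/2$, from the chosen ones, check that this subspace is non-empty, complete, CAT(1) and again of diameter $\le\pi/2$, and iterate --- so that after enough steps $Y$ would contain an approximate spherical join of copies of $\mathbb{S}^0$, hence a subspace of covering dimension exceeding $n$, contradicting $\dim_C(Y)\le n$. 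Tracking the distortion accumulated along the recursion yields the explicit $\delta_n$; verifying that each equidistant subspace remains a complete CAT(1) space of diameter $\le\pi/2$, and doing so with uniform control, is the delicate part of this half.
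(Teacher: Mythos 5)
Your proposal is correct and matches the paper exactly: the paper gives no proof of this statement, but simply cites \cite[Thm.~1.7 \& Prop.~1.8]{FNS06} for the two inequalities and \cite[II.2.7]{BH99} for the uniqueness of the circumcenter once $\rad(Y)<\pi/2$ is known, which is precisely your plan. The additional orientation you give on how the arguments in \cite{FNS06} run is reasonable but goes beyond what the paper does or needs.
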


\noindent Consequently the limit set at infinity of a decreasing sequence $\{C_n\}_{n\ge1}$ like above has an unique circumcenter. Indeed if $L\subseteq\bd X$ is the limit set at infinity of such a sequence we have $\diam(L)\le\pi/2$ by the Lemma \ref{LemDefLimitSet}. Since $(\bd X, \angle)$ is a complete CAT(1) space \cite[Thm. II.9.13, p. 285]{BH99}, the convex hull of $L$ is such that $\diam(\overline{\text{co}(L)})=\diam(L)\le\pi/2$ \cite[Lem. 4.1, p. 546]{LS97}. By hypothesis and \ref{ThmFNS} (i) we have $\dim_C(\bd X)<\infty$ and thus $\dim_C(\overline{\text{co}(L)})<\infty$. This allows us to apply \ref{ThmFNS} (ii) to the complete CAT(1) space $\overline{\text{co}(L)}$ to conclude that $\rad(L)\le\rad(\overline{\text{co}(L)})<\pi/2$ and that $L$ has an unique circumcenter.

\begin{prop}\label{PropLimitField}
Let $(\Omega, \A)$ be a Borel space, $\ch{X}$ be a Borel field of proper CAT(0) spaces with finite covering dimension and $\{\se C^n\}_{n\ge1}$ is a sequence of Borel subfields of convex, closed, non empty everywhere subsets which satisfies, for every $\omega\in\Omega$, $C^n_\omega\supseteq C^{n+1}_\omega$ for every $n\ge1$ and $\cap_{n\ge1}C^n_{\omega}=\emptyset$.
\begin{enumerate}
\item[(i)] The subfield $\se L$ of $\partial\se X$ - where, for every $\omega\in\Omega$, $L_{\omega}$ is the limit set at infinity of the sequence $\{C^n_\omega\}_{n\ge1}$ - is Borel.
\item[(ii)] The section $\se\xi:=c_{\se L}$ is Borel.
\end{enumerate}
\end{prop}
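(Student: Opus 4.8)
The plan is to deduce both assertions from material already developed, the decisive point being to realise the limit set inside the horofunction compactification $\overline{\se i(\se X)}$ rather than inside $\C_0(\se X)$. Although $\C_0(X_\omega)$ is not proper, each fibre $\overline{i_\omega(X_\omega)}$ is compact; so, by Theorem \ref{ThmStructBord}(i) together with Remark \ref{RemFieldBetweenAndUnion}(1), $\overline{\se i(\se X)}$ is itself a Borel field of proper (indeed compact) metric spaces, and in particular the intersection results of Section~2 apply there. Moreover, since $\cap_{n\ge1}C^n_\omega=\emptyset$ with each $C^n_\omega$ non-empty, each $X_\omega$ is unbounded, so the base of $\bd\se X$ equals $\Omega$.

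For assertion (i) I would argue as follows. Fix the base section $\se x^0\in\callo X$. By the discussion following Definition \ref{DefiBorelGeodProj} every projection $\pi_{\se C^n}(\se x^0)$ belongs to $\callo X$, so, $\se i$ being a continuous morphism (Theorem \ref{ThmStructBord}(i)), $\se i(\pi_{\se C^n}(\se x^0))\in\mathcal L(\Omega,\overline{\se i(\se X)})$ for all $n$. By Remark \ref{RemFieldBetweenAndUnion}(2) the subfield $\se K$ defined by $K_\omega:=\overline{\{i_\omega(\pi_{C^n_\omega}(x^0_\omega))\}_{n\ge1}}$, with the closure taken in the compact space $\overline{i_\omega(X_\omega)}$, is a Borel subfield of closed subsets of $\overline{\se i(\se X)}$. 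Since $\bd\se X$ is also a Borel subfield of closed subsets of $\overline{\se i(\se X)}$ by Theorem \ref{ThmStructBord}(ii), Proposition \ref{PropProperIntersectionClosedField} shows that $\se K\cap\bd\se X$ is a Borel subfield. It then remains to recognise $\se K\cap\bd\se X$ as $\se L$: the hypothesis $\cap_n C^n_\omega=\emptyset$ forces $d(x^0_\omega,\pi_{C^n_\omega}(x^0_\omega))\to\infty$, so every accumulation point of $\{\pi_{C^n_\omega}(x^0_\omega)\}$ in the compact space $\overline{X_\omega}\cong\overline{i_\omega(X_\omega)}$ lies in $\bd X_\omega$; hence $K_\omega\cap\bd X_\omega$ is exactly the limit set $L_\omega$, which is non-empty by compactness. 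Thus $\se L$ is a Borel subfield with base $\Omega$.

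For assertion (ii) I would appeal to Theorem \ref{ThmCircum}(ii). By (i), $\se L$ is a Borel subfield of non-empty subsets of $\bd\se X$ that are closed in the conic topology; and the bound $r(L_\omega)<\pi/2$ for each $\omega$ is precisely the content of the discussion preceding the Proposition: Lemma \ref{LemDefLimitSet} gives $\diam(L_\omega)\le\pi/2$ in the angular metric, hence $\diam(\overline{\text{co}(L_\omega)})\le\pi/2$ by \cite{LS97}, while Theorem \ref{ThmFNS}(i) and the finite covering dimension of $\se X$ give $\dim_C(\bd X_\omega)<\infty$, so Theorem \ref{ThmFNS}(ii) applied to the complete CAT(1) space $\overline{\text{co}(L_\omega)}$ yields $r(L_\omega)\le r(\overline{\text{co}(L_\omega)})<\pi/2$. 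Theorem \ref{ThmCircum}(ii) then delivers $c_{\se L}\in\mathcal L(\Omega,\bd\se X)$, that is, $\se\xi:=c_{\se L}$ is Borel; this is the only step using the finite-covering-dimension hypothesis.

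The hard part is the bookkeeping in (i). One must be scrupulous that the closure defining $L_\omega$ is the one computed in $\overline{i_\omega(X_\omega)}$, not in $i_\omega(X_\omega)$ and not in all of $\C_0(X_\omega)$, and one must verify that $\overline{\se i(\se X)}$ genuinely qualifies as a Borel field of \emph{proper} metric spaces, so that Proposition \ref{PropProperIntersectionClosedField}, whose hypothesis is properness of the ambient fibres, may legitimately be applied; the identification $K_\omega\cap\bd X_\omega=L_\omega$ then rests on properness of $X_\omega$, which is what drives the accumulation points of $\{\pi_{C^n_\omega}(x^0_\omega)\}$ off to infinity. With these points settled, (ii) is essentially a citation.
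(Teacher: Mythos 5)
Your proposal is correct and follows essentially the same route as the paper: both realise the limit set as the intersection of the Borel subfield $\overline{\{\pi_{\se C^n}(\se x^0)\}_{n\ge1}}$ of the compactification with $\bd\se X$ via Proposition \ref{PropProperIntersectionClosedField}, and both obtain (ii) from Theorem \ref{ThmCircum} together with the bound $r(\om L)<\pi/2$ established in the discussion preceding the proposition. Your write-up is merely more explicit than the paper's about where the closure is taken and why the accumulation points land in $\bd\om X$, which the paper leaves as "by definition".
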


\begin{proof}
(i) Fix a section $\se x^0\in\callo X$. By definition we have
$$\om L=\overline{\{\pi_{\om C^n}(\om x^0)\}_{n\ge1}}\cap\bd\om X\quad\text{for every }\omega\in\Omega.$$
But $\overline{\{\pi_{\se C^n}(\se x^0)\}_{n\ge1}}$ is a Borel subfield of $\overline{X}_{\point}$ since
$\pi_{\se C^n}(\se x^0)\in\callo X$ for every $n\ge1$. Consequently since $\se{\overline X}$
is compact the intersection of this Borel subfield with $\partial\se X$ is Borel by Proposition \ref{PropProperIntersectionClosedField}. We conclude that $\se L$ is a Borel subfield of $\partial \se X$ by using the Remark \ref{RemFieldBetweenAndUnion} (5).
\smallskip

\noindent (ii) This follows directly from (i) and the fact that $\rad(\om L)<\pi/2$ for every $\omega\in\Omega$ by using the Theorem \ref{ThmCircum}.
\end{proof}

\begin{rem}
The previous results also hold for a generalized sequence $\{\se C^\alpha\}_{\alpha\in\RR}$ indexed by $\RR$ provided we had the following condition : $\om C^\beta=\cap_{\alpha<\beta}\om C^\alpha$ for every $\omega\in\Omega$ and $\beta\in\RR$. The limit set at infinity is in this case given by $L_\omega=\overline{\{\pi_{\om C^\alpha}(x)\}_{\alpha\in\RR}}\cap\bd\om X$ and the ``continuity'' condition is here to ensure that if $D$ is a dense subset of $\RR$, then
$$L_\omega=\overline{\{\pi_{\om C^\alpha}(x)\}_{\alpha\in D}}.$$
This is used to prove that $\se L$ is a Borel subfield.
\end{rem}

\subsection{Adams--Ballmann Decomposition}

\noindent We now turn our attention to the Adams--Ballmann decomposition of a proper CAT(0) space. First we recall the following key definition.

\begin{defi}
Let $X$ be a proper CAT(0) space. A point $\xi\in\bd X$ is called a flat point if the associate Busemann function $b_\xi$ is an affine function. Remark that the set of flat points - denoted by $F$ - is $\text{Isom}(X)$-invariant.
\end{defi}

\noindent The boundary of a product $X\times Y$ is isometric (when endowed with the angular metric) to the spherical join of the boundaries, \textit{i.e.} $\bd X\ast\bd Y$ (see \cite[I.5.13]{BH99} for the definition of the spherical join of two metric spaces and \cite[II.9.11]{BH99} for the proof of the result). The following theorem states the existence of what we shall call the Adams--Ballmann decomposition of a CAT(0) space.

\begin{thm}[\cite{AB98}]\label{ThmDecompositionAB} Let $X$ be a proper CAT(0) space. Then there exists a real Hilbert space $E$, a complete CAT(0) space $Y$ and an isometric map $i:X\tend Z=Y\times E$ such that
\begin{itemize}
\item[(i)] $i(F)=\bd E\cap\bd(i(X))\subseteq\bd Y\ast\bd E\simeq\bd Z$ and the set of directions $\{v(i(\xi))\mid\xi\in F\}$ generates $H$ as a real Hilbert space,
\item[(ii)] the set $Y':=\pi_Y(i(X))$ is convex and dense in $Y$,
\item[(iii)] any isometry $\gamma:X\tend X$ extends uniquely in $\widetilde\gamma:Z\tend Z$ and $\widetilde\gamma=(\widetilde\gamma_Y,\widetilde\gamma_E)$.
\end{itemize}
\end{thm}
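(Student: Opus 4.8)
The plan is to realize $E$ as the Hilbert space dual to the space of affine functions generated by the flat Busemann functions, to build an affine $1$-Lipschitz map $j:X\to E$ by evaluation, and then to obtain $Y$ by stripping the Hilbert directions out of the metric of $X$. Concretely, fix $x_0\in X$ and let $V$ be the real vector space spanned, inside the affine functions on $X$ modulo constants, by $\{b_\xi\mid\xi\in F\}$. Since each $b_\xi$ is $1$-Lipschitz and decreases with unit speed along the rays pointing to $\xi$, the slope (the optimal Lipschitz constant) is a norm on $V$. The first key step is to show this norm satisfies the parallelogram identity, hence comes from an inner product $\langle\cdot,\cdot\rangle$ on $V$ with $\langle b_\xi,b_\eta\rangle=\cos\angle(\xi,\eta)$ and $\|b_\xi\|=1$; let $E$ be the Hilbert completion of $(V,\langle\cdot,\cdot\rangle)$. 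Define $j:X\to E\cong E^*$ by letting $j(x)$ be the functional $b\mapsto b(x)-b(x_0)$ on $V$: the bound $|b(x)-b(x_0)|\le\|b\|\,d(x,x_0)$ shows $j$ is well defined and $1$-Lipschitz, and it is affine on geodesics because every $b\in V$ is.

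Next I would set $d_Y(x,y):=\sqrt{d(x,y)^2-\|j(x)-j(y)\|_E^2}$, which is non-negative since $j$ is $1$-Lipschitz. The second and main step is to prove that $d_Y$ is a CAT(0) pseudo-metric (in particular that it satisfies the triangle inequality). This is the geometric core, and it is where the flat-point hypothesis and the CAT(0) comparison inequality are genuinely used: one first treats a finite family $\xi_1,\dots,\xi_n$ of flat points, for which $E_n:=\mathrm{span}\{b_{\xi_i}\}$ is finite-dimensional Euclidean, and shows by induction on $n$ — adjoining one flat direction at a time, using that the geodesic rays toward a flat point and the (convex, affine) level sets of its Busemann function split geodesics of $X$ orthogonally into a "vertical" and a "horizontal" part — that $X$ embeds isometrically into $Y_n\times E_n$; then one passes to the limit over finite subsets of $F$, using that $\|j_{F_0}(x)-j_{F_0}(y)\|$ increases to $\|j(x)-j(y)\|$ and that an increasing pointwise limit of CAT(0) geodesic pseudo-metrics is again one. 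Let $Y$ be the complete CAT(0) space obtained from $(X,d_Y)$ by identifying points at $d_Y$-distance $0$ and completing, let $Y'\subseteq Y$ be the dense image of $X$, and put $i(x):=(\overline x,j(x))\in Z:=Y\times E$. By construction $d_Z(i(x),i(y))^2=d_Y(x,y)^2+\|j(x)-j(y)\|^2=d(x,y)^2$, so $i$ is an isometric embedding; since $i$ preserves geodesics and $X$ is geodesic, $i(X)$ is convex in $Z$, hence $Y'=\pi_Y(i(X))$ is convex and dense in $Y$, which is (ii). (The parallelogram identity of the first step in fact follows once the finite case of this second step is available, since then the slope norm on each $\mathrm{span}\{b_{\xi_1},\dots,b_{\xi_n}\}$ is the Euclidean norm on $E_n^*$.)

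For (i): along a ray $c_{x_0,\xi}$ to a flat point $\xi$ one has $d(c_{x_0,\xi}(t),x_0)=t$ while $\|j(c_{x_0,\xi}(t))-j(x_0)\|_E=t$, because each $b_\eta$ is affine in $t$ along this ray so $j$ moves along it at unit $E$-speed in the direction $v_\xi$ dual to $b_\xi$; hence $d_Y(c_{x_0,\xi}(t),x_0)\equiv0$, so $i(c_{x_0,\xi})$ is a ray lying entirely in the Hilbert factor and $i(\xi)\in\bd E\cap\bd i(X)$ with $v(i(\xi))=v_\xi$. The family $\{v_\xi\mid\xi\in F\}$ is total in $E$ because, under $E\cong E^*$, it corresponds to $\{b_\xi\}$, which spans the dense subspace $V$. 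For the reverse inclusion $\bd E\cap\bd i(X)\subseteq i(F)$, a point there is represented by a sequence $i(x_k)$ converging conically to an $E$-direction, which forces $d_Y(x_k,x_0)$ to stay bounded, i.e. $\|j(x_k)-j(x_0)\|/d(x_k,x_0)\to1$; one then checks that the limiting direction $\xi$ of $x_k$ in $\bd X$ (using properness) has an affine Busemann function, i.e. lies in $F$. Finally, for (iii): every $\gamma\in\text{Isom}(X)$ permutes $F$, hence acts linearly on $V$ via the Busemann cocycle $(\ref{EquationCocycleBusemann})$ and preserves the slope, so it extends to an isometry $\widetilde\gamma_E$ of $E$; since $\gamma$ preserves $d$ and, through $\widetilde\gamma_E$, preserves $\|j(\cdot)-j(\cdot)\|$, it preserves $d_Y$ and induces an isometry $\widetilde\gamma_Y$ of $Y$; then $\widetilde\gamma:=(\widetilde\gamma_Y,\widetilde\gamma_E)$ satisfies $\widetilde\gamma\circ i=i\circ\gamma$, and it is the unique extension respecting the product because $Y'$ is dense in $Y$ and $\mathrm{span}\{v_\xi\}$ is dense in $E$.

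The main obstacle is the second step: proving that $d_Y$ is a CAT(0) (pseudo-)metric, equivalently that the flat Hilbert directions split off orthogonally and Euclideanly inside the enlargement $Z$ of $X$. Once that is in hand — via the induction on finitely many flat points, the comparison inequality, and a limiting argument — everything else (the parallelogram law, isometricity of $i$, the two boundary identifications in (i), and the extension of isometries in (iii)) reduces to bookkeeping with Busemann functions, the cocycle relation $(\ref{EquationCocycleBusemann})$, and elementary Hilbert-space geometry.
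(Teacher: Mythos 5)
The paper offers no proof of this statement: it is imported wholesale from \cite{AB98}, so the only meaningful comparison is with Adams and Ballmann's original argument. At the level of architecture your outline does follow it --- the Hilbert space spanned by the flat Busemann functions with $\langle b_\xi,b_\eta\rangle=\cos\angle(\xi,\eta)$, the evaluation map $j$, the complementary pseudo-metric $d_Y$, the identification of $\partial E\cap\partial(i(X))$ with $i(F)$, and the extension of isometries via the cocycle relation (\ref{EquationCocycleBusemann}).

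There is nonetheless a genuine gap, and you have located it yourself: the entire construction rests on the claim that $d_Y(x,y)^2=d(x,y)^2-\|j(x)-j(y)\|_E^2$ is a CAT(0) pseudo-metric, and what you offer for it is a description of the desired conclusion rather than a proof. Concretely: (a) the inner product on $V$ is not known to exist until the kernel $(\xi,\eta)\mapsto\cos\angle(\xi,\eta)$ is shown to be positive semi-definite on $F$; you propose to deduce this from ``the finite case of the second step'', but that step is itself the unproved one, so nothing closes the loop. (b) The base case of your induction --- a single flat point $\xi$, for which one must show that $d(x,y)^2-(b_\xi(x)-b_\xi(y))^2$ already satisfies the triangle inequality and the comparison inequality --- is precisely the key lemma of \cite{AB98}; saying that the level sets of $b_\xi$ ``split geodesics of $X$ orthogonally into a vertical and a horizontal part'' restates the conclusion ($X$ itself does \emph{not} split, which is exactly why one must pass to the enlargement $Y\times E$), whereas an actual argument has to combine the affinity of $b_\xi$ at midpoints with the CAT(0) median inequality. (c) The inductive step silently requires that, after dividing out one flat direction, the residual functions $b_\eta+\cos\angle(\xi,\eta)\,b_\xi$ descend to the quotient and remain affine with slope $\sin\angle(\xi,\eta)$; this needs a lemma (for a flat point the relevant angles and slopes do not depend on the base point) that you neither state nor prove. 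Finally, in (i) the inclusion $\partial E\cap\partial(i(X))\subseteq i(F)$ is dispatched with ``one then checks'': showing that the limiting direction has an affine Busemann function is a separate, nontrivial verification. Items (ii) and (iii) are indeed routine once all of the above is in place.
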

\noindent It follows from this theorem that the angular and the conic topology on $F$ coincide, that the geometry on $F$ is spherical and that $F$ is closed and $\pi$-convex in $\bd X$. In order to adapt this result in the context of Borel fields of proper CAT(0) spaces we have to observe the following.

\begin{rems}\label{RemsDecompositionAB}\quad
\smallskip

\noindent (1) Let $X$ be a proper \cat\ space. If $D$ is a dense subset of $F$, then $E$ is generated by $\{v(i(\xi))\mid\xi\in D\}$. If $F=\emptyset$, then the decomposition is trivial with $E=\{*\}$ and $Y=C$.
\smallskip

\noindent (2) A careful analysis of the proof of the Theorem \ref{ThmDecompositionAB} shows that one can construct the decomposition such that the origin of $E$ is $\pi_E(i(x_0))$ where $x_0\in X$ is any chosen point.
\end{rems}

\begin{lem}\label{LemFlatPointsBorelSubfield}
Let $(\Omega, \A)$ be a Borel space and $(\Omega, \se X)$ be a Borel field of proper CAT(0) spaces. Then the subfield $\se F$ of $\bd\se X$ - defined by $\om F$ is the set of flat points of $\bd\om X$  for every $\omega\in\Omega$ - is a Borel subfield of closed subsets.
\end{lem}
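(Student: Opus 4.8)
The statement to prove is that the subfield $\se F$ of flat points is a Borel subfield of closed subsets of $\bd\se X$. Since each $\om X$ is proper, $\bd\om X$ is compact and the subfield $\bd\se X$ is a Borel subfield of closed subsets of $\overline{\se i(\se X)}$ by Theorem \ref{ThmStructBord}; hence by Proposition \ref{PropProperIntersectionClosedField} it suffices to realize $\se F$ as a countable intersection of Borel subfields of closed subsets of $\bd\se X$. The key is to express, for a fixed proper CAT(0) space $X$ and a point $\xi\in\bd X$, the condition ``$b_\xi$ is affine'' in terms of countably many Borel conditions on $\xi$. Recall that $b_\xi$ is affine precisely when it is both convex (always true) and concave, i.e. when for every geodesic segment $[p,q]$ and its midpoint $m$ one has $b_\xi(m)\ge\tfrac12(b_\xi(p)+b_\xi(q))$; equivalently, by convexity of $b_\xi$, when $b_\xi\bigl(\gamma_{p,q}(t)\bigr)=(1-t)b_\xi(p)+t\,b_\xi(q)$ for all $p,q$ and all $t\in[0,1]$.

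\noindent First I would fix a fundamental family $\D=\{\se x^n\}_{n\ge1}$ of $\callo X$, so that by density it is enough to test the affinity condition on the points of $\om\D$: the function $x\mapsto b_{x_0,\xi}(x)$ is continuous, and so is $\gamma_{p,q}(t)$ in $(p,q,t)$, hence $b_\xi$ is affine on $\om X$ iff for all $n,m\ge1$ and all rational $t\in[0,1]$,
$$b_{\om x^0,\om\xi}\bigl(\gamma_{\om x^n,\om x^m}(t)\bigr)=(1-t)\,b_{\om x^0,\om\xi}(\om x^n)+t\,b_{\om x^0,\om\xi}(\om x^m).$$
For fixed $n,m$ and rational $t$, define the subfield
$$\se F^{n,m,t}:=\bigl\{\se\xi\in\bd\se X \bigm| b_{\se x^0,\se\xi}\bigl(\gamma_{\se x^n,\se x^m}(t)\bigr)=(1-t)\,b_{\se x^0,\se\xi}(\se x^n)+t\,b_{\se x^0,\se\xi}(\se x^m)\bigr\}.$$
By Lemma \ref{LemGeodRayBorel} (applied through the construction of $\se i$) the Busemann functions along Borel sections are Borel in $\se\xi$ — more precisely, by Remark \ref{RemStructureBord} a section $\se\xi\in\mathcal S(\Omega',\bd\se X)$ is Borel iff $b_{\se x^0,\se\xi}(\se x)$ is Borel for every $\se x\in\callo X$, and since $\gamma_{\se x^n,\se x^m}(t)\in\callo X$ by Definition \ref{DefiBorelGeodProj}, the three relevant functions $\se\xi\mapsto b_{\se x^0,\se\xi}(\gamma_{\se x^n,\se x^m}(t))$, $\se\xi\mapsto b_{\se x^0,\se\xi}(\se x^n)$, $\se\xi\mapsto b_{\se x^0,\se\xi}(\se x^m)$ are Borel as functions of the section variable on the Borel field $\bd\se X$.

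\noindent The plan is then to invoke the analogue of Proposition \ref{PropDistanceBorel} or Proposition \ref{PropProperIntersectionClosedField} for the Borel field $\bd\se X$ (which is a Borel field of proper — indeed compact — metric spaces in the conic topology) to conclude that each $\se F^{n,m,t}$, being the zero set of a Borel morphism $\bd\se X\to\RR$ of the form $\se\xi\mapsto b_{\se x^0,\se\xi}(\gamma_{\se x^n,\se x^m}(t))-(1-t)b_{\se x^0,\se\xi}(\se x^n)-t\,b_{\se x^0,\se\xi}(\se x^m)$, is a Borel subfield of closed subsets; here I use step (2) of the proof of Proposition \ref{PropProperIntersectionClosedField} (zero sets of Borel continuous morphisms are Borel subfields). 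Finally $\se F=\bigcap_{n,m\ge1,\,t\in\QQ\cap[0,1]}\se F^{n,m,t}$, a countable intersection, so $\se F$ is a Borel subfield of closed subsets by Proposition \ref{PropProperIntersectionClosedField}. Closedness of $\om F$ in $\bd\om X$ is the classical fact recalled after Theorem \ref{ThmDecompositionAB}. The main obstacle I anticipate is the bookkeeping needed to justify that $\se\xi\mapsto b_{\se x^0,\se\xi}(\se x)$ is a Borel (continuous) morphism from the Borel field $\bd\se X$ to $\RR$ for each fixed $\se x\in\callo X$ — i.e. that it respects the Borel structure on $\bd\se X$ inherited from $\overline{\se i(\se X)}\subseteq\C_0(\se X)$ — but this is exactly the content of Remark \ref{RemStructureBord}, so it reduces to unwinding that identification rather than to any genuinely new argument.
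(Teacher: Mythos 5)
Your proposal is correct and follows essentially the same route as the paper's proof: test affinity of $b_{\se x^0,\se\xi}$ only on geodesics with endpoints in a fundamental family and rational parameters (justified by continuity of Busemann functions and of geodesics in their endpoints), observe that the resulting conditions define elements of $\mathcal L(\Omega,\C(\bd\se X))$ via Remark \ref{RemStructureBord} and the Borelness of $\gamma_{\se x^n,\se x^m}(t)$, apply the zero-set argument from step (2) of the proof of Proposition \ref{PropProperIntersectionClosedField}, and finish with a countable intersection. The only cosmetic difference is that the paper bundles the countably many conditions into one continuous functional $\Delta^R$ (a supremum over the ball $B(\se x^0,R)$) and intersects over $R$, whereas you keep one closed condition per triple $(n,m,t)$ and intersect over those; both are valid.
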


\begin{proof} We start by considering a proper \cat\ space $X$ and $x_0$ a base point of $X$. For every positive integer $R$ we introduce the function
$$\begin{array}{cccl}
\Delta^R:&\C(X)&\rightarrow&\RR\\
&f&\mapsto&\Delta^R(f):=\sup_{z,z'\in B(x_0,R)}\sup_{t\in[0,1]}|f(\gamma_{z,z'}(t))-(1-t)f(z)-tf(z')|
\end{array}$$
where we recall that $\gamma_{z, z'}$ is the geodesic from $z$ to $z'$. It's straightforward to check that for every positive integer $R$ the function $\Delta^R$ is continuous - when $\C(X)$ is endowed with the uniform convergence on compact sets. Note that if $D\subseteq X$ is a dense subset and $f\in\mathcal{C}(X)$, we will obtain the same value for $\Delta^R(f)$ by taking the supremum on $B(x_0,R)\cap D$ and $[0,1]\cap\mathbf{Q}$ because in a CAT(0) space a geodesic varies continuously with its endpoints (see \cite[II.1.4]{BH99}). We'll use the functions $\Delta^R$ - that measure the lack of affinity of functions in $\C(X)$ on the balls $B(x_0,R)$ - to show that $(\Omega,\se F)$ is a Borel subfield. We fix $\se x^0\in\callo X$ and we define $\se{\overline{\Delta}}\!{}^R\in\mathcal S(\Omega,\C(\bd\se X))$ by
$$\begin{array}{cccl}
\om{\overline\Delta}\!{}^R :&\bd\om X&\tend&\RR\\
&\xi&\mapsto&\Delta^R(b_{\xi,\om x^0})
\end{array}$$
Indeed, $\om{\overline\Delta}\!{}^R$ is continuous because it is the restriction of $\om\Delta^R$ on $\bd\om X$. By definition of $\se F$ we have that $\se F=\cap_{R\ge1}(\se{\overline \Delta}\!{}^R)^{-1}(\{0\})$ so that by the Proposition \ref{PropIntersectionClosedField} it remains to show that $(\se{\overline \Delta}\!{}^R)^{-1}(\{0\})$ is a Borel subfield . By the second step of the proof of this same proposition, it's enough to show that $$\se{\overline\Delta}\!{}^R\in{\cal L}(\Omega,\C(\bd\se X)).$$
So we only have to check that $\se{\overline\Delta}\!{}^R(\se\xi)$ is a Borel function whenever $\se\xi\in{\cal  L}(\Omega,\bd\se X)$.

\smallskip
\noindent For every $R\ge1$, we pick a fundamental family $\D^R$  of the Borel subfield $B(\se x^0,R)$ of $\se X$. Remark that
$$\se{\overline\Delta}\!{}^R(\se\xi)=\underset{\se z,\se z'\in \D^R}{\sup}\underset{t\in[0,1]\cap\mathbf{Q}}{\sup}|b_{\se\xi,\se x^0}(\gamma_{\se z,\se z'}(t))-(1-t)b_{\se\xi,\se x^0}(\se z)-tb_{\se\xi,\se x^0}(\se z')|$$
which is therefore Borel because the evaluation $b_{\se\xi,\se x^0}$ on Borel sections of $\se X$ is Borel by definition and since $\gamma_{\se z,\se z'}(t)\in\callo X$ for every $t\in[0,1]$ (see the comment after the definition \ref{DefiBorelGeodProj}).
\end{proof}

\begin{prop}\label{PropDecompositionABborelien}
Let $(\Omega, \A)$ be a Borel space and $(\Omega, \se X)$ be a Borel field of proper CAT(0) spaces. For every $\omega\in\Omega$, we consider the Adams--Ballmann decomposition $\om i:\om X\tend\om Z=\om Y\times\om E$. Then there exists Borel structures $\callo Y$ and $\callo E$ on the fields $(\Omega,\se Y)$ and $(\Omega,\se E)$ such that $\se i:(\Omega,\se X)\tend(\Omega,\se Z)$ is an isometric morphism where the Borel structure on $(\Omega,\se Z)$ is given by\footnote{It can be easily seen to be a Borel structure on $(\Omega,\se Z)$.} $\callo Y\times\callo E$.
\end{prop}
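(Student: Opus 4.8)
The plan is to produce $\callo E$ and $\callo Y$ from explicit fundamental families obtained by pushing forward, through $\se i$, the fundamental family of $\ch X$ and a fundamental family of the Borel subfield of flat points $\se F\subseteq\bd\se X$ (which is Borel by Lemma \ref{LemFlatPointsBorelSubfield}), and then to invoke Examples \ref{ExampBorelField} (5) to check that these families do define Borel structures. First I would fix a base section $\se x^0\in\callo X$ and, using Remark \ref{RemsDecompositionAB} (2), choose for each $\omega$ the decomposition $\om i:\om X\to\om Z=\om Y\times\om E$ so that the origin of $\om E$ is $\pi_{\om E}(\om i(\om x^0))$. I also fix a fundamental family $\{\se x^m\}_{m\ge1}$ of $\ch X$ and a fundamental family $\{\se\xi^n\}_{n\ge1}$ of $\se F$, the latter living on $\Omega_F:=\omdansom{\om F\nonvide}$; on the (Borel) complement $\Omega\setminus\Omega_F$ the decomposition is trivial ($\om E=\{0\}$, $\om Y=\om X$, $\om i=\mathrm{id}$) and every assertion below is immediate, so I restrict attention to $\Omega_F$. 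For $\omega\in\Omega_F$ put $\om v^n:=v(\om i(\om\xi^n))\in\om E$, the unit vector of $\om E$ corresponding to the boundary point $\om i(\om\xi^n)\in\bd\om E$; by Remark \ref{RemsDecompositionAB} (1) the $\om v^n$ span a dense linear subspace of $\om E$.

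The argument rests on two geometric facts about the decomposition. First, for $\xi,\eta\in\om F$ the Tits angle between $\om i(\xi)$ and $\om i(\eta)$ coincides both with the angle between the corresponding unit vectors of $\om E$ and with the Tits angle between $\xi$ and $\eta$ in $\bd\om X$; hence $\omega\mapsto\langle\om v^n,\om v^m\rangle$ is the cosine of the Tits angle function $\se\angle(\se\xi^n,\se\xi^m)$, which is Borel by Lemma \ref{LemmaAngleBorel}. Consequently, letting $\D_E$ be the countable family of all finite $\QQ$-linear combinations of the sections $\se v^n$, the pointwise mutual distances of the members of $\D_E$ are Borel (finite quadratic expressions in the above Gram entries) and $\{\om v\mid v\in\D_E\}$ is dense in $\om E$ for every $\omega$; so by Examples \ref{ExampBorelField} (5) the set $\callo E:=\mathcal L_{\D_E}(\Omega,\se E)$ is a Borel structure. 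Second, because $\om i$ is isometric, carries $\om\xi^n\in\om F$ to $\om i(\om\xi^n)\in\bd\om E$, and the origin of $\om E$ was chosen to be $\pi_{\om E}(\om i(\om x^0))$, a direct computation of Busemann functions in the Euclidean product $\om Z=\om Y\times\om E$ gives, for all $\om x\in\om X$,
$$\big\langle\pi_{\om E}(\om i(\om x)),\om v^n\big\rangle=-\,b_{\om x^0,\om\xi^n}(\om x).$$

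By Remark \ref{RemStructureBord} the right-hand side is a Borel function of $\omega$ for every $\se x\in\callo X$ (recall $\se\xi^n\in\mathcal L(\Omega_F,\bd\se X)$). Moreover, density of $\D_E$ in each $\om E$ yields $\|w\|=\sup\{\langle w,\om v\rangle/\|\om v\|\mid v\in\D_E,\ \om v\neq0\}$ for all $w\in\om E$; applied to $w=\pi_{\om E}(\om i(\om x))$ this exhibits $\omega\mapsto\|\pi_{\om E}(\om i(\om x))\|$ as a countable supremum of Borel functions (the sets $\{\om v=0\}$ being Borel, as $\|\se v\|^2$ is Borel on $\D_E$), hence Borel. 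Combining, $\se d(\pi_{\se E}(\se i(\se x)),\se v)$ is Borel for every $\se x\in\callo X$ and $\se v\in\D_E$, i.e. $\pi_{\se E}\circ\se i$ maps $\callo X$ into $\callo E$ and is thus a morphism $\ch X\to(\Omega,\se E)$, continuous since each $\pi_{\om E}\circ\om i$ is $1$-Lipschitz. Next I would put $\se y^m:=\pi_{\se Y}(\se i(\se x^m))$: density of $\{\om x^m\}$ in $\om X$, the $1$-Lipschitz map $\pi_{\om Y}\circ\om i$, and Theorem \ref{ThmDecompositionAB} (ii) make $\{\om y^m\}$ dense in $\om Y$, while the identity $d_{\om Y}(\om y^m,\om y^{m'})^2=\om d(\om x^m,\om x^{m'})^2-\|\pi_{\om E}(\om i(\om x^m))-\pi_{\om E}(\om i(\om x^{m'}))\|^2$ (from $\om i$ isometric and $\om Z$ Euclidean), together with compatibility of $\callo E$, shows the mutual distances of the $\se y^m$ are Borel. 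Hence $\callo Y:=\mathcal L_{\D_Y}(\Omega,\se Y)$ with $\D_Y:=\{\se y^m\}$ is a Borel structure by Examples \ref{ExampBorelField} (5), and $\pi_{\se Y}\circ\se i$ is a continuous morphism since it carries the fundamental family $\{\se x^m\}$ into $\D_Y\subseteq\callo Y$.

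Finally $\callo Z:=\callo Y\times\callo E$ is a Borel structure on $(\Omega,\se Z)$ — as the proposition's footnote remarks, one may take $\D_Y\times\D_E$ as fundamental family, using $d_{\om Z}^2=d_{\om Y}^2+d_{\om E}^2$ and Examples \ref{ExampBorelField} (5) — and since $\se i(\se x)=(\pi_{\se Y}(\se i(\se x)),\pi_{\se E}(\se i(\se x)))\in\callo Y\times\callo E$ for all $\se x\in\callo X$ while each $\om i$ is an isometry, $\se i:\ch X\to(\Omega,\se Z)$ is an isometric morphism. The main obstacle is the pair of geometric identities of the second paragraph — that Tits angles compute the Gram matrix of the directions $\om v^n$, and that $\langle\pi_{\om E}(\om i(\om x)),\om v^n\rangle=-b_{\om x^0,\om\xi^n}(\om x)$; establishing them requires unwinding the construction of the Adams--Ballmann decomposition and the elementary behaviour of Busemann functions in a Hilbert factor of a Euclidean product, but once they are available they feed directly into the already-proven measurability of Tits angles (Lemma \ref{LemmaAngleBorel}) and of Busemann evaluations (Remark \ref{RemStructureBord}); the rest — the supremum formula for the norm on $\om E$ and the bookkeeping through Examples \ref{ExampBorelField} (5) and the criterion for checking morphisms on a fundamental family — is routine.
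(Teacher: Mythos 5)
Your proposal follows essentially the same route as the paper: choose the origin of $\om E$ at $\pi_{\om E}(\om i(\om x^0))$, generate $\callo E$ from the directions $v(\om i(\om\xi^n))$ of a fundamental family of the flat-point subfield (with the Gram entries Borel via the Tits angle, Lemma \ref{LemmaAngleBorel}), verify $\pi_{\se E}(\se i(\se x))\in\callo E$ through the identity $\langle\pi_{\om E}(\om i(\om x)),v(\om i(\om\xi^n))\rangle=-b_{\om x^0,\om\xi^n}(\om x)$, and then build $\callo Y$ from $\pi_{\se Y}(\se i(\se x^m))$ using the Pythagorean distance formula and Examples \ref{ExampBorelField} (5). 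The only differences are cosmetic — you unpack Dixmier's Hilbert-field criterion into an explicit countable $\QQ$-linear fundamental family (and correspondingly check Borelness of the norm), and you treat the locus where $\se F$ is empty separately — so the argument is correct and matches the paper's.
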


\begin{proof}
We start by defining the Borel structure on $\se E$. Fix $\se x^0\in\callo X$. By the Remark \ref{RemsDecompositionAB} (2), we can choose the decomposition such that the origin of $\om E$ is $\pi_{\om E}\big(\om i(\om x^0)\big)$ for every $\omega\in\Omega$. We pick $\D:=\{\se\xi^n\}_{n\ge1}\subseteq\mathcal L(\Omega,\bd\se X)$ a fundamental family of the Borel subfield $\se F$. By Remark \ref{RemsDecompositionAB} (1) the sets $\{v(\om i(\om \xi^n))\}_{n\ge1}$ are total in $\om E$ for every $\omega\in\Omega$. Moreover, if we denote by $\langle\cdot,\cdot\om{\rangle}$ the scalar product on $\om E$, we have that the map $\Omega\rightarrow\RR,\omega\mapsto\langle v(\om i(\om xi^n)),v(\om i(\om\xi^m))\om\rangle=\cos(\angle(\om\xi^n,\om\xi^m))$ is Borel since $\angle(\se\xi,\se\eta)$ is Borel for every $\se\xi,\se\eta\in\mathcal L(\Omega,\se F)\subseteq\mathcal L(\Omega,\bd\se X)$ (\textit{cf}. Lemma \ref{LemmaAngleBorel}). So the family $\{\se\xi^n\}_{n\ge1}\subseteq\calSe E$ is a fundamental family in the sense of Dixmier \cite[p. 145]{Dix69} and so it generates a Borel structure\footnote{It is a structure in a Hilbert sense, but it's easy to see that it's a particular case of a Borel structure on a field of metric spaces.}
$$\callo E:=\{\se e\in\calSe H\mid\langle\se e,v(\se i(\se\xi^n)) \se\rangle\text{ is a Borel function }\forall\ n\ge1\}.$$
We claim that $\pi_{\se E}(\se i(\se x))\in\callo E$ for every $\se x\in\callo X$. We know that $\om i(\om\xi^n)\in\bd\om E\subseteq\bd\om Y\ast\bd\om E=\bd\om Z$ for every $n\ge1$ and $\omega\in\Omega$. Thus, by the classical descriptions of Busemann functions in a product and in a Hilbert space (keep in mind here that $\om i(\om x^0)$ is the origin of the Hilbert space $\om E$), we have that
$$\langle\pi_{\se E}(\se i(\se x)),v(\se i(\se\xi^n))\se\rangle=-b_{\se i(\se x^0),\se i(\se\xi^n)}(\se i(\se x))=
-b_{\se x^0,\se\xi^n}(\se x).$$
The last function being Borel (\textit{cf.} Remark \ref{RemStructureBord}) we've therefore proven the claim.
\smallskip

\noindent Now we can deal with the structure on $\se Y$. Let $\D=\{\se x^n\}_{n\ge1}$ be a fundamental family of the field $\se X$. Then $\{\pi_{\om Y}(\om i(\om x^n))\}_{n\ge1}$ is dense in $\om Y$ for every $\omega\in\Omega$. Moreover $$d_{\se Y}(\pi_{\se Y}(\se i(\se x^n)),\pi_{\se Y}(\se i(\se x^m)))=\sqrt{d_{\se X}(\se x^n,\se x^m)^2-d_{\se Y}(\pi_{\se Y}(\se i(\se x^n)),\pi_{\se Y}(\se i(\se x^m)))^2}$$
is a Borel function for every $n,m\ge1$. By the Example \ref{ExampBorelField} (5) the family $\{\pi_{\se Y}(\se i(\se x^n))\}_{n\ge1}$ defines a Borel structure $\callo Y$ on $(\Omega,\se Y)$. As before we can easily show that if $\se x\in\callo X$, then $d_{\se Y}(\pi_{\se Y}(\se i(\se x^n)),\pi_{\se Y}(\se i(\se x)))$ is Borel for every $n\ge1$, \textit{i.e.} $\pi_{\se Y}(\se i(\se x))\in\callo Y$.

\noindent Therefore there exists Borel structures $\callo Y$ and $\callo Z$ such that $\se i(x)\in\callo Y\times\callo Z$ for every $\se x\in\callo X$, \textit{i.e.} $\se i$ is a morphism.
\end{proof}

\noindent There is two important subsets of $F$ that are used in the proof of Theorem \ref{ThmAB} : the subset $A:=\{\xi\in F\mid-\xi\in F\}$ - which is well defined since the geometry of $F$ is spherical - and $P:=\{\xi\in F\mid\angle(\xi,A)=\frac{\pi}{2}\}$. Observe that these subsets are closed and $\pi$-convex, that if we decompose $X$ with respect to $A$ then we have $X\simeq Y\times\RR^n$ and that $P=\emptyset$ if and only if $A=F$.

\begin{lem}\label{LemseP&seA}
Let $(\Omega, \A)$ be a Borel space and $(\Omega, \se X)$ be a Borel field of proper CAT(0) spaces. Then the subfields $\se A$ and $\se P$ of $\se F$ are Borel.
\end{lem}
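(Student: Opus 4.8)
The plan is to rewrite the intrinsic definitions of $\om A$ and $\om P$ as conditions on the Tits angle that involve only a countable fundamental family, and then to realise the two subfields as level sets of Borel continuous morphisms, exactly as in the second step of the proof of Proposition \ref{PropProperIntersectionClosedField}. Throughout I would use that $\se F$ is a Borel subfield of closed subsets of $\bd\se X$ (Lemma \ref{LemFlatPointsBorelSubfield}), so that, by Remark \ref{RemFieldBetweenAndUnion} (1), $(\Omega',\se F)$ is a Borel field of metric spaces over the Borel set $\Omega':=\omdansom{\om F\nonvide}$; since each $\om F$ is compact (being closed in the compact space $\bd\om X$) for the conic topology, it is in fact a Borel field of \emph{proper} metric spaces, so that Theorem \ref{ThmStrucBorelFctCont} and Proposition \ref{PropProperIntersectionClosedField} apply to it. I would also use the two structural facts about the Adams--Ballmann decomposition recalled before the statement: on $\om F$ the conic topology agrees with the one induced by the angular (Tits) metric, and the geometry of $\om F$ is spherical. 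Over $\Omega\setminus\Omega'$ all three subfields are empty, so it suffices to work over $\Omega'$.

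For $\se A$, I would argue as follows. Since $\om F$ embeds isometrically into a round sphere, $\diam(\om F,\angle)\le\pi$, the function $\angle(\xi,\cdot)$ is $1$-Lipschitz for the angular metric — hence continuous for the conic topology — on $\om F$ and attains its supremum there by compactness, and any $\eta\in\om F$ with $\angle(\xi,\eta)=\pi$ must be the antipode $-\xi$; therefore $\om A=\{\xi\in\om F\mid\sup_{\eta\in\om F}\angle(\xi,\eta)=\pi\}$. Fixing a fundamental family $\D=\{\se\xi^n\}_{n\ge1}$ of $\se F$, conic-density of $\{\om\xi^n\}_{n\ge1}$ in $\om F$ together with conic-continuity of $\angle(\xi,\cdot)$ gives $\sup_{\eta\in\om F}\angle(\xi,\eta)=\sup_{n\ge1}\angle(\xi,\om\xi^n)$. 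Then the section $\se g:\omega\mapsto\bigl(\xi\mapsto\sup_{n\ge1}\angle(\xi,\om\xi^n)\bigr)$ lies in $\mathcal L(\Omega',\C(\se F))$: each $\om g$ is a countable supremum of $1$-Lipschitz functions, hence belongs to $\C(\om F)$, and for every $\se\xi\in\mathcal L(\Omega',\se F)\subseteq\mathcal L(\Omega',\bd\se X)$ the function $\se g(\se\xi)=\sup_{n\ge1}\se\angle(\se\xi,\se\xi^n)$ is a countable supremum of the Borel functions provided by Lemma \ref{LemmaAngleBorel}, hence Borel. Since $\se A=\se g^{-1}(\{\pi\})$, the second step of the proof of Proposition \ref{PropProperIntersectionClosedField} — applied to the proper field $\se F$ with $\se f=\se g$ and the constant sections $\se a=\se b\equiv\pi$ — shows $\se A$ is a Borel subfield of $(\Omega',\se F)$, hence of $\bd\se X$ by transitivity of Borel subfields (Remark \ref{RemFieldBetweenAndUnion} (5)).

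For $\se P$, let $\Omega'':=\omdansom{\om A\nonvide}\subseteq\Omega'$ be the (Borel) base of the Borel subfield $\se A$ and let $\D_A=\{\se\zeta^n\}_{n\ge1}$ be one of its fundamental families. Over $\Omega''$ the same density–continuity argument yields, for $\xi\in\om F$, the equality $\angle(\xi,\om A)=\inf_{\eta\in\om A}\angle(\xi,\eta)=\inf_{n\ge1}\angle(\xi,\om\zeta^n)$, so that $\se h:\omega\mapsto\bigl(\xi\mapsto\inf_{n\ge1}\angle(\xi,\om\zeta^n)\bigr)$ again defines an element of $\mathcal L(\Omega'',\C(\se F))$ (each $\om h$ is a countable infimum of $1$-Lipschitz functions, and $\se h(\se\xi)=\inf_{n\ge1}\se\angle(\se\xi,\se\zeta^n)$ is Borel), and hence $\se h^{-1}(\{\pi/2\})$ is a Borel subfield of $(\Omega'',\se F)$ by the second step of the proof of Proposition \ref{PropProperIntersectionClosedField}. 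On the complementary Borel set $\Omega'\setminus\Omega''$, where $\om A=\emptyset$, the spherical join decomposition $\om F=\om A\ast\om P$ of the Adams--Ballmann structure (\cite{AB98}, cf. Theorem \ref{ThmDecompositionAB}) forces $\om P=\om F$, so there $\se P$ coincides with $\se F$, a Borel subfield. Gluing the two Borel pieces along $\Omega'=\Omega''\sqcup(\Omega'\setminus\Omega'')$ and invoking Remark \ref{RemFieldBetweenAndUnion} (4) then gives that $\se P$ is a Borel subfield of $\bd\se X$.

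The main obstacle, to my mind, is not the Borel bookkeeping — which simply parallels the arguments already used for Proposition \ref{PropProperIntersectionClosedField} and Theorem \ref{ThmCircum} — but the geometric reformulations in the first step of each part: translating ``$-\xi\in\om F$'' into ``$\sup_{\eta}\angle(\xi,\eta)=\pi$'' and ``$\angle(\xi,\om A)=\pi/2$'' into a countable infimum, and in particular dealing with the case $\om A=\emptyset$, all rest on the structure theory of $F$ (spherical geometry, coincidence of the conic and angular topologies on $F$, uniqueness of antipodes, and the join decomposition $F=A\ast P$) rather than on the formalism of Borel fields. Once these reformulations are in place, everything else is routine.
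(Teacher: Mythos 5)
Your proof is correct, but for $\se A$ it takes a genuinely different route from the paper's. The paper works inside the Hilbert factor $\se E$ of the Borel Adams--Ballmann decomposition (Proposition \ref{PropDecompositionABborelien}): it views $\bd\om E$ as the unit sphere of $\om E$, notes $\om A=\om F\cap(-\om F)$, observes that $-\se F$ is again a Borel subfield of compact sets, and concludes by Proposition \ref{PropProperIntersectionClosedField}. You instead stay inside $(\bd\se X,\se\angle)$ and characterize $\om A$ as the level set $\{\xi\in\om F\mid\sup_{\eta\in\om F}\angle(\xi,\eta)=\pi\}$, using compactness of $\om F$, the coincidence of the conic and angular topologies on $F$, and uniqueness of antipodes in the spherical geometry of $F$; Borelness then comes from Lemma \ref{LemmaAngleBorel} and countable suprema over a fundamental family. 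Both arguments ultimately rest on the same structural facts about $F$, but yours avoids invoking the Borel structure on $\se E$, at the cost of an extra attainment argument. For $\se P$ the two proofs coincide in substance ($\se P$ as the $\pi/2$-level set of $\se\angle(\cdot,\se A)$, handled by step 2 of the proof of Proposition \ref{PropProperIntersectionClosedField}); you are more explicit than the paper in writing $\se\angle(\cdot,\se A)$ as a countable infimum over a fundamental family of $\se A$, and --- usefully --- in treating the locus where $\om A=\emptyset$, where the paper's formula $\se\angle(\cdot,\se A)^{-1}(\{\pi/2\})$, read with the convention $d(\cdot,\emptyset)=\infty$, would give $\om P=\emptyset$ rather than the intended $\om P=\om F$ (the reading needed for the assertion that $P=\emptyset$ if and only if $A=F$, and for the final proof of Theorem \ref{ThmAH}).
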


\begin{proof}
Recall that $\om F\subseteq\bd\om E$ and observe that $\bd\om E$ can be interpreted (topologically) as the unit sphere of $\se E$. From Proposition \ref{PropDecompositionABborelien} we have that $\se F$ is a Borel subfield of $\bd\se E$. Since $\om E$ is a Hilbert space, we can consider $-\om F\subseteq \bd\om E\subseteq \om E$ and it's easy to get convinced that $\om F\cap(-\om F)=\om A$. As $-\se F$ is obviously a Borel subfield of compact subsets of $\se E$, then $\se A$ is a Borel subfield of $\se F$ by Proposition \ref{PropProperIntersectionClosedField}. Thus $\se\angle(\cdot,\se A)\in {\cal L}(\Omega,\C(\se F))$. Therefore - by the second step of the proof of this same proposition - $\se P=\se\angle(\cdot,\se A)^{-1}(\{\pi/2\})$ is also a Borel field of closed subsets of $\se F$.
\end{proof}

\section{Actions of Equivalence Relations}

\subsection{Definition}

\noindent In the sequel $[\R]$ denotes the full group of $\R$, \textit{i.e.} the group of Borel automorphisms of $\Omega$ whose graphs are contained in $\R$.

\begin{defi}\label{DefiAction}
Let $(\Omega, \A)$ be a standard Borel space, $\ch{X}$ be a Borel field of metric spaces and $\R\subseteq\Omega^2$ be a Borel equivalence relation. An action of $\R$ on $\ch{X}$ is given by a family of bijective maps indexed by $\R$ denoted by $\{\alpha(\omega, \omega') : \om X\rightarrow\omp X\}_{(\omega, \omega')\in\R}$ with the following conditions.
\begin{enumerate}
\item[(i)] [Cocycle rule] For every $(\omega, \omega'), (\omega', \omega'')\in\R$, $\alpha(\omega', \omega'')\circ\alpha(\omega, \omega')=\alpha(\omega, \omega'')$.
\end{enumerate}
Observe that condition (i) implies the existence of a natural action of $[\R]$ on $\calSe X$ : if $g\in[\R]$ and $\se x\in\calSe{X}$, we can define a new section $g\se x$ by $(g\se x\om)=\alpha(g^{-1}\omega,\omega) x_{g^{-1}\omega}$. As we are in the Borel context, we'll also require:
\begin{enumerate}
\item[(ii)]  The set $\callo X\subseteq\calSe X$ is invariant under the action of $[\R]$.
\end{enumerate}

\noindent We denote such an action by $\alpha : \R\curvearrowleft\ch{X}$.

\noindent If moreover for all $(\omega, \omega')\in\R$ the map $\alpha(\omega, \omega')$ is continuous (resp. isometric or linear) we say that $\R$ acts by homeomorphisms (resp. by isometries or linearly).
\end{defi}

\subsection{Basic Properties}

\noindent If $\R$ acts by homeomorphisms it is straightforward to see by using the Lemma \ref{LemmaBorelStructureFundamentalFamily} that the Condition (ii) is equivalent to $g(\D)\subseteq\callo{X}$ for all $g\in[\R]$ for any fundamental family $\D\subseteq\callo{X}$. By using classical technics of decompositions and gluings, it's also possible to prove that it's enough to check condition (ii) for every element of a countable group $G\subseteq[\R]$ such that $\R=\R_G$. Observe also that $[\R]$ acts on the set of subfields of $\ch{X}$ in total analogy with its action on the sections.

\begin{prop}\label{PropActionGSecSubfield}
Let $(\Omega, \A, \mu)$ be a standard probability space, $\ch{X}$ be a Borel field of metric spaces and $\R\subseteq\Omega^2$ be a Borel equivalence relation which acts on $\ch{X}$ by homeomorphisms. Then $[\R]$ acts on the set of Borel subfields of $\ch{X}$. Moreover if $\mu$ is quasi-invariant under $\R$, then $[\R]$ acts on $L(\Omega, \se X)$ and more generally it acts on the equivalence classes of Borel subfields of $\ch{X}$.
\end{prop}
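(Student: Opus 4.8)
The plan is to make explicit the action of $[\R]$ on subfields announced just before the statement — for $g\in[\R]$ and a subfield $\se A$ of $\ch X$, set $(g\se A)_\omega:=\alpha(g^{-1}\omega,\omega)(A_{g^{-1}\omega})$, with the empty set sent to the empty set — and then to verify three things: that $g$ sends Borel subfields to Borel subfields, that (when $\mu$ is quasi-invariant) this descends to the classes modulo equality almost everywhere, and that the statement about $L(\Omega,\se X)$ is the special case of subfields with singleton fibres. The cocycle rule of Definition~\ref{DefiAction}~(i) already makes $g\mapsto(\se A\mapsto g\se A)$ a genuine left action on the set of \emph{all} subfields, by exactly the computation that proves it for sections, so nothing is needed there.

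For the first point I would check Definition~\ref{DefiBorelSubfield} directly. If $(\Omega,\se A)$ is Borel with base $\Omega'$ and fundamental family $\{\se y^n\}_{n\ge1}$, then the base of $g\se A$ is $g(\Omega')$, which is Borel because $g$ is a Borel automorphism of $\Omega$; and I would propose the restrictions $g\se y^n\mid_{g(\Omega')}$, $n\ge 1$, as a fundamental family of $g\se A$. Each $g\se y^n$ lies in $\callo X$ by the invariance condition~(ii) of Definition~\ref{DefiAction}, hence its restriction to $g(\Omega')$ is a Borel section over that base; it takes values in $g\se A$ because $\alpha(g^{-1}\omega,\omega)$ is a bijection; and the density requirement $(g\se A)_\omega\subseteq\overline{\{(g\se y^n)_\omega\}_{n\ge1}}$ follows by pushing the inclusion $A_\omega\subseteq\overline{\{y^n_\omega\}_{n\ge1}}$ forward through the \emph{continuous} map $\alpha(g^{-1}\omega,\omega)$. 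This is the only place where continuity of the action intervenes.

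For the second point, assume $\mu$ is quasi-invariant under $\R$ and take $\se A=_{\mu\text{-a.e.}}\se B$ with null exceptional set $N:=\{\omega\mid A_\omega\neq B_\omega\}$. Since each $\alpha(g^{-1}\omega,\omega)$ is a bijection, the set on which $g\se A$ and $g\se B$ differ equals $g(N)$; as the graph of $g$ lies in $\R$ we have $g(N)\subseteq\R[N]$, whence $\mu(g(N))\le\mu(\R[N])=0$ by the definition of quasi-invariance. So $g[\se A]:=[g\se A]$ is well defined, and the action axioms descend from the subfield level. Finally, a Borel section $\se x$ is in particular a Borel subfield with singleton fibres $\omega\mapsto\{x_\omega\}$ (Remark~\ref{RemFieldBetweenAndUnion}~(2)), which $g$ carries to the fibrewise singletons of $g\se x\in\callo X$, so the action restricts to $L(\Omega,\se X)$. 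I do not expect a genuine obstacle; the point worth isolating is that quasi-invariance is precisely the hypothesis needed for this second step — on honest subfields, as opposed to their classes, the action needs no assumption on $\mu$ at all.
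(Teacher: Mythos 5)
Your proposal is correct and follows essentially the same route as the paper: the same formula $(g\se A)_\omega=\alpha(g^{-1}\omega,\omega)A_{g^{-1}\omega}$, the same verification that $g(\Omega')$ is the Borel base and that the image of a fundamental family under $g$ is a fundamental family of $g\se A$, and the same use of quasi-invariance (via $g(N)\subseteq\R[N]$) to descend to equivalence classes. You merely spell out the density and well-definedness details that the paper's shorter proof leaves implicit.
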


\begin{proof}
Let $\se{A}$ be a Borel field, $\Omega':=\{\omega\in\Omega\mid\om A\nonvide\}$ its base and $g\in[\R]$. To show that the field $g\se A$ defined by $(g\se A)_\omega:=\alpha(g^{-1}\omega, \omega)A_{g^{-1}\omega}$ is Borel, we observe that $\{\omega\in\Omega\mid (g\se A)_{\omega}\neq\emptyset\}=g\Omega'\in\A$ and that if $\D\subseteq\calloom{\Omega'}{A}$ is a fundamental family of $\se A$ then $g(\D)\subseteq\mathcal L(g\Omega',g\se A)$ satisfies \ref{DefiBorelSubfield} (ii). Under the hypothesis of quasi-invariance of $\mu$, if $\se A=_{\mu\text{-a.e.}}\se B$ then $g\se A=_{\mu\text{-a.e.}}g\se B$ since the set $\{\omega\in\Omega\mid (g\se A)_{\omega}=(g\se B)_{\omega}\}=g\{\omega\in\Omega\mid \om A=\om B\}$ is of measure null.
\end{proof}

\begin{defi}\label{DefiInvariance}
Under the hypothesis of the Proposition \ref{PropActionGSecSubfield}, a section $\se x\in\callo{X}$ is called $\R$-invariant (or simply invariant whenever the relation can clearly be identified) if $\alpha(\omega, \omega')\om x=\omp x$ for all $(\omega, \omega')\in\R$. We say that a section $\se x$ is almost $\R$-invariant if it exists a Borel set $A\in\A$ with $\mu(A)=1$ such that the equality holds for all $(\omega, \omega')\in\R\cap A^2$. Mutatis mutandis we define $\R$-invariant and almost $\R$-invariant Borel subfields. We'll also use the terminology (almost) $\alpha$-invariant whenever it is necessary to be more precise.
\end{defi}

\begin{rem}\label{RemInvariance}
It's straightforward to check that a Borel section $\se x$ (or a Borel subfield) is (almost) $\R$-invariant if and only if it is (almost) $[\R]$-invariant. Therefore the almost invariance of a Borel section $\se x$ (or of a Borel subfield) is equivalent to the invariance of its class $[\se x]\in L(\Omega,\se X)$. The existence of a countable group generating $\R$ allows us to always assume that the set $A$ of the definition \ref{DefiInvariance} is invariant. An the other hand if $\ch Y$ is an invariant Borel subfield, then the action on $\ch X$ induced an action on $\ch Y$. Putting it all together, it shows that - in the measure context - we can always assume without losing generality that an almost invariant Borel subfield is invariant.
\end{rem}

\noindent We now show that an action of $\R$ on a Borel field of proper metrics spaces $\ch{X}$ naturally gives rise to an action on the previously constructed Borel field $(\Omega, \mathcal{C}(\se X))$.

\begin{lem}\label{LemActionC}
Let $(\Omega, \A)$ be a standard Borel space, $\ch{X}$ be a Borel field of proper metric spaces and $\R\subseteq\Omega^2$ be a Borel equivalence relation. Suppose that an action $\alpha : \R\curvearrowleft\ch{X}$ by homeomorphisms is given. Then there exists an induced action $\widetilde{\alpha} : \R\curvearrowleft(\Omega, \mathcal{C}(\se X))$ by linear homeomorphisms.
\end{lem}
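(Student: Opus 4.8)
The plan is to let the induced action be precomposition with inverses. For $(\omega,\omega')\in\R$ define
$$\widetilde\alpha(\omega,\omega'):\C(\om X)\longrightarrow\C(\omp X),\qquad f\longmapsto f\circ\alpha(\omega,\omega')^{-1};$$
this makes sense because $\alpha(\omega,\omega')$ is a homeomorphism (so $\alpha(\omega,\omega')^{-1}=\alpha(\omega',\omega)$ by the cocycle rule, whence $f\circ\alpha(\omega',\omega)\in\C(\omp X)$), and it is obviously $\RR$-linear. The cocycle rule for $\widetilde\alpha$ is then immediate from the one for $\alpha$:
$$\widetilde\alpha(\omega',\omega'')\circ\widetilde\alpha(\omega,\omega')(f)=f\circ\alpha(\omega,\omega')^{-1}\circ\alpha(\omega',\omega'')^{-1}=f\circ\bigl(\alpha(\omega',\omega'')\circ\alpha(\omega,\omega')\bigr)^{-1}=\widetilde\alpha(\omega,\omega'')(f),$$
so in particular $\widetilde\alpha(\omega,\omega)=\id$ and each $\widetilde\alpha(\omega,\omega')$ is a linear bijection with inverse $\widetilde\alpha(\omega',\omega)$. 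First I would check that each $\widetilde\alpha(\omega,\omega')$ is a homeomorphism of $(\C(\om X),\om\delta)$ onto $(\C(\omp X),\omp\delta)$: by Lemma \ref{LemmaSpaceContinuousFunctionProperSpace} the metrics $\om\delta,\omp\delta$ induce the topology of uniform convergence on compact sets, and precomposition with a homeomorphism of proper metric spaces --- which carries compact sets onto compact sets --- is bicontinuous for that topology. Note that $\widetilde\alpha(\omega,\omega')$ is generally \emph{not} isometric, since $\om\delta$ and $\omp\delta$ are built from the fixed base sections $\om x^0$ and $\omp x^0$, which need not correspond under $\alpha(\omega,\omega')$; this is precisely why the statement only claims an action by linear homeomorphisms.

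The substantive point is Condition (ii) of Definition \ref{DefiAction}: the Borel structure ${\mathcal L}(\Omega,\C(\se X))$ of Theorem \ref{ThmStrucBorelFctCont} must be invariant under the induced $[\R]$-action on $\mathcal S(\Omega,\C(\se X))$, which sends $\se f$ to the section $(g\se f)_\omega:=\widetilde\alpha(g^{-1}\omega,\omega)\bigl(f_{g^{-1}\omega}\bigr)$. Fix $g\in[\R]$, $\se f\in{\mathcal L}(\Omega,\C(\se X))$ and $\se x\in\callo X$; by the definition of ${\mathcal L}(\Omega,\C(\se X))$ it is enough to show that $\omega\mapsto(g\se f)_\omega(\om x)$ is Borel. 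Unwinding the definitions and using $\alpha(\omega,\omega')^{-1}=\alpha(\omega',\omega)$ one gets
$$(g\se f)_\omega(\om x)=f_{g^{-1}\omega}\bigl(\alpha(\omega,g^{-1}\omega)(\om x)\bigr)=f_{g^{-1}\omega}\bigl((g^{-1}\se x)_{g^{-1}\omega}\bigr)=\bigl(\se f(g^{-1}\se x)\bigr)(g^{-1}\omega),$$
the middle equality being exactly the definition of the $[\R]$-action on $\calSe X$ applied to $g^{-1}$. Now $g^{-1}\se x\in\callo X$ because $\callo X$ is $[\R]$-invariant (Condition (ii) for the given action $\alpha$), hence $\se f(g^{-1}\se x):\Omega\rightarrow\RR$ is Borel since $\se f\in{\mathcal L}(\Omega,\C(\se X))$, and composing with the Borel automorphism $g^{-1}$ of $\Omega$ preserves Borelness. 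Thus $g\se f\in{\mathcal L}(\Omega,\C(\se X))$, which is Condition (ii), and $\widetilde\alpha:\R\curvearrowleft(\Omega,\C(\se X))$ is the desired action.

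I do not expect a genuine obstacle here --- everything reduces to straightforward bookkeeping. The only points demanding a little care are keeping the inverses straight (in the definition $\widetilde\alpha(\omega,\omega')(f)=f\circ\alpha(\omega',\omega)$ and in the identity $(g\se f)(\se x)=\bigl(\se f(g^{-1}\se x)\bigr)\circ g^{-1}$) and remembering that, unlike $\alpha$, the maps $\widetilde\alpha(\omega,\omega')$ are only homeomorphisms, not isometries.
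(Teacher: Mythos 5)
Your proposal is correct and follows essentially the same route as the paper: define $\widetilde\alpha(\omega,\omega')(f)=f\circ\alpha(\omega',\omega)$, note it is a homeomorphism for the compact-open topology since homeomorphisms preserve compact sets, and verify Condition (ii) of Definition \ref{DefiAction} via the identity $(g\se f)(\se x)=\bigl(\se f(g^{-1}\se x)\bigr)\circ g^{-1}$ together with the $[\R]$-invariance of $\callo X$. The extra details you supply (the explicit cocycle computation and the remark that $\widetilde\alpha(\omega,\omega')$ need not be $\se\delta$-isometric) are accurate but not needed beyond what the paper records.
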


\begin{proof}
\noindent The natural way to define the action is to write for $(\omega, \omega')\in\R$
$$\begin{array}{cccl}\widetilde\alpha(\omega,\omega'):&\C(\om X)&\tend&\C(\omp X)\\
&\om f&\mapsto&\om f\circ\alpha(\omega',\omega). \end{array}$$
It is clear that $\widetilde\alpha(\omega,\omega')$ is a homeomorphism (with respect to the topology of uniform convergence on compact sets) because a homeomorphism between Hausdorff spaces preserves compact sets. The cocycle rule is obvious so it only remains to check the Condition \ref{DefiAction} (ii), \textit{i.e.} that if $\se f\in{\mathcal L}(\Omega,\C(\se X))$ and $g\in[\R]$, then $g\se f\in{\mathcal L}(\Omega,\C(\se X))$. To do so, we fix $\se x\in\callo{X}$ and we observe that
$$(g\se f)_\omega(\om x)=f_{g^{-1}\omega}\big(\alpha(\omega,g^{-1}\omega)(\om x)\big)=f_{g^{-1}\omega}\big(g^{-1}(\se x)_{g^{-1}\omega}\big).$$
Consequently we have $g\se f(\se x)=(\se f(g^{-1}\se x))\circ g^{-1}$ and this shows that the evaluation $(g\se f)(\se x)$ is Borel because $g^{-1}\se x\in\callo X$ (since $\alpha$ is an action) and $\se f(g^{-1}\se x)$ is Borel  by definition of the Borel structure of the Borel field $(\Omega, \mathcal{C}(\se X))$. So we can conclude that $g\se f\in{\mathcal L}(\Omega,\C(\se X))$.
\end{proof}

\noindent Moreover if the field is a field of CAT(0) spaces  then the action extends to $(\Omega,\bd\se X)$.

\begin{lem}\label{LemActionBd}
Let $(\Omega, \A)$ be a Borel space, $\ch{X}$ be a Borel field of proper unbounded CAT(0) spaces, $\R\subseteq\Omega^2$ be a Borel equivalence relation and an action $\alpha : \R\curvearrowleft\ch{X}$ by isometries. Then there exists an induced action $\widetilde{\alpha} : \R\curvearrowleft(\Omega, \overline{X}_{\point})$ by homeomorphisms and $(\Omega,\partial\se X)$ is invariant.
\end{lem}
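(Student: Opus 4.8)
The plan is to realise the field $\overline X_\point$ inside $\C_0(\se X)$ as the Borel subfield $\overline{\se i(\se X)}$ of Theorem \ref{ThmStructBord}, and to obtain $\widetilde\alpha$ by correcting the induced action $\widetilde\alpha^{\C}:\R\curvearrowleft(\Omega,\C(\se X))$ of Lemma \ref{LemActionC} so that it no longer leaves $\C_0$. Writing $P_{\omega'}:\C(X_{\omega'})\to\C_0(X_{\omega'})$, $g\mapsto g-g(x^0_{\omega'})\,\mathbf 1$, for the renormalisation at the fixed section $\se x^0$, I would set, for $(\omega,\omega')\in\R$, $\widetilde\alpha(\omega,\omega'):=P_{\omega'}\circ\widetilde\alpha^{\C}(\omega,\omega')$ on $\C_0(X_\omega)$; explicitly $\widetilde\alpha(\omega,\omega')(f)=f\circ\alpha(\omega',\omega)-\big(f\circ\alpha(\omega',\omega)\big)(x^0_{\omega'})\,\mathbf 1$. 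Each $\widetilde\alpha(\omega,\omega'):\C_0(X_\omega)\to\C_0(X_{\omega'})$ is continuous, being the composite of the homeomorphism $\widetilde\alpha^{\C}(\omega,\omega')$ with the continuous linear map $P_{\omega'}$.

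First I would record the equivariance identity $\widetilde\alpha(\omega,\omega')\circ i_\omega=i_{\omega'}\circ\alpha(\omega,\omega')$ on $X_\omega$: substituting $f=i_\omega(x)=d_x-d(x,x^0_\omega)$ into the formula and using that $\alpha(\omega',\omega)$ is an isometry with inverse $\alpha(\omega,\omega')$, the two copies of $d(x,x^0_\omega)$ cancel against the renormalisation and what survives is $d_y-d(y,x^0_{\omega'})=i_{\omega'}(y)$, where $y=\alpha(\omega,\omega')(x)$. In particular $\widetilde\alpha(\omega,\omega')$ restricts to a bijection of $i_\omega(X_\omega)$ onto $i_{\omega'}(X_{\omega'})$. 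Next I would check the cocycle rule on $\C_0$: expanding $\widetilde\alpha(\omega',\omega'')\circ\widetilde\alpha(\omega,\omega')$ and using linearity of $\widetilde\alpha^{\C}(\omega',\omega'')$ and of $P_{\omega''}$, the only term by which it differs from $\widetilde\alpha(\omega,\omega'')=P_{\omega''}\circ\widetilde\alpha^{\C}(\omega,\omega'')$ is a scalar multiple of $\widetilde\alpha^{\C}(\omega',\omega'')(\mathbf 1)=\mathbf 1\circ\alpha(\omega'',\omega')=\mathbf 1$, which $P_{\omega''}$ annihilates; the cocycle rule for $\widetilde\alpha^{\C}$ (hence for $\alpha$) then finishes it. Taking $\omega''=\omega$ gives $\widetilde\alpha(\omega,\omega)=\id$, so each $\widetilde\alpha(\omega,\omega')$ is in fact a homeomorphism $\C_0(X_\omega)\to\C_0(X_{\omega'})$ with inverse $\widetilde\alpha(\omega',\omega)$.

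Now I would restrict everything to $\overline X_\point$. Homeomorphisms commute with closure, so by the equivariance identity $\widetilde\alpha(\omega,\omega')\big(\overline{i_\omega(X_\omega)}\big)=\overline{i_{\omega'}(X_{\omega'})}$, and $\widetilde\alpha$ restricts to a family of homeomorphisms $\overline X_\omega\to\overline X_{\omega'}$ still satisfying the cocycle rule (so, once Condition \ref{DefiAction}(ii) is checked below, an action $\R\curvearrowleft(\Omega,\overline X_\point)$ by homeomorphisms). Because these maps also send $i_\omega(X_\omega)$ bijectively onto $i_{\omega'}(X_{\omega'})$, they send the complement $\partial X_\omega=\overline{i_\omega(X_\omega)}\setminus i_\omega(X_\omega)$ onto $\partial X_{\omega'}$; since $(\Omega,\partial\se X)$ is a Borel subfield by Theorem \ref{ThmStructBord}(ii), this says precisely that it is invariant.

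It remains to verify Condition \ref{DefiAction}(ii), that $\mathcal L(\Omega,\overline X_\point)$ is $[\R]$-invariant; as the action is by homeomorphisms it suffices to check $g(\D)\subseteq\mathcal L(\Omega,\overline X_\point)$ for every $g\in[\R]$ and a single fundamental family $\D$. I would take $\D:=\se i(\D_0)$ for $\D_0$ a fundamental family of $\se X$ (it is fundamental for $\overline X_\point$, exactly as used in the proof of Lemma \ref{LemGeodRayBorel}). The equivariance identity promotes to sections, $g\cdot\se i(\se x)=\se i(g\cdot\se x)$ for every $\se x\in\callo X$; since $g\cdot\se x\in\callo X$ (because $\alpha$ is an action on $\se X$) and $\se i$ is a morphism into $\overline{\se i(\se X)}$ by Theorem \ref{ThmStructBord}(i), we get $\se i(g\cdot\se x)\in\mathcal L(\Omega,\overline X_\point)$, as needed. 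The only step calling for any real care is the cocycle check in the second paragraph: the key observation is that the mismatch between the two sides is a multiple of the constant function $\mathbf 1$, which the renormalisation $P$ kills; everything else is bookkeeping.
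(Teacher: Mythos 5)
Your proposal is correct and follows essentially the same route as the paper: the induced action is given by exactly the same renormalised formula $f\mapsto f\circ\alpha(\omega',\omega)-f(\alpha(\omega',\omega)x^0_{\omega'})$, the key point in both arguments is the equivariance $\widetilde\alpha(\omega,\omega')\circ i_\omega=i_{\omega'}\circ\alpha(\omega,\omega')$, and the boundary's invariance is read off from $\partial X_\omega=\overline{i_\omega(X_\omega)}\setminus i_\omega(X_\omega)$. The only (harmless) divergences are that you verify the cocycle rule explicitly where the paper calls it straightforward, and you check Condition \ref{DefiAction}(ii) on the fundamental family $\se i(\D_0)$ rather than on all of $\mathcal L(\Omega,\C_0(\se X))$ as the paper does — both reductions the paper itself sanctions.
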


\begin{proof}
Let $X_1$ and $X_2$ be two proper unbounded CAT(0) spaces and $\gamma : X_1\rightarrow X_2$ an isometry. If we think to the boundary as the quotient of the geodesic rays then the extension $\widetilde{\gamma} :\overline{X}_1\rightarrow\overline{X}_2$ is purely geometric and is a homeomorphism such that $\widetilde\gamma(\bd X_1)=\bd X_2$ \cite[II.8.9]{BH99}. As we used the notion of Busemann functions to define the Borel structure of the field of boundaries, we need to transpose the situation to this context. If $x_i\in X_i$ are base points of $X_i$ for $i=1,2$ then the map
\begin{equation}
\begin{array}{cccl}
\widetilde\gamma_0:&\C_0(X_1)&\tend&\C_0(X_2)\\
&f&\mapsto&\widetilde\gamma_0(f):y\mapsto
f(\gamma^{-1}y)-f(\gamma^{-1}x_2)
\end{array}
\end{equation}
is a homeomorphism and it is such that the diagram
$$\begin{diagram}
\node{\C_0(X_1)}\arrow{e,t,2}{\widetilde\gamma_0}\node{\C_0(X_2)}\\
\node{X_1}\arrow{e,t,2}{\gamma}\arrow{n,t,2,J}{i_1}\node{X_2}\arrow{n,t,2,J}{i_2}
\end{diagram}
$$
commutes. It's easy to check that the two extensions coincide and that if $\xi\in\partial X_1$ then  $\widetilde{\gamma}_0(b_{x_1,\xi})=b_{x_2,\widetilde{\gamma}(\xi)}$.
\smallskip

\noindent Now let's turn to the case of fields. Given a fixed section $\se x^0\in\callo{X}$, we use (i) to define for each $(\omega, \omega')\in\R$
$$\begin{array}{cccl}
\widetilde\alpha(\omega,\omega'):&\C_0(\om X)&\tend&\C_0(\omp X)\\
&f&\mapsto&\alpha(\omega,\omega')(f):x\mapsto
f(\alpha(\omega',\omega)x)-f(\alpha(\omega',\omega)\omp x^0).
\end{array}$$
This formula defines an action by homeomorphisms. We proceed as in the proof of Lemma \ref{LemActionC} : the verification of the cocycle rule is straightforward, and an easy computation shows that for every $\se x\in\callo{X}$, $g\in[\R]$ and $\se f\in\mathcal{L}(\Omega, \mathcal{C}_0(\se X))$ we have
$$(g\se f)(\se x)=\big(\se f(g^{-1}\se x)-\se f(g^{-1}\se x^0)\big)\circ g^{-1}.$$
Thus $g\se f$ is Borel.
\end{proof}

\subsection{Amenability}

In this section we define the amenability for an equivalence relation in terms of actions on Borel fields of Banach spaces and we also show that our definition is equivalent with the one given originally by \textsc{Zimmer}.

\begin{defi}\label{DefiFieldBanachSep}
Let $(\Omega,{\cal A})$ be a Borel space and $\ch B$ be a field of Banach spaces on $\Omega$. Such a field is called Borel if it exists a Borel structure on $\ch B$ which is defined in the same way as in the Definition \ref{DefiBorelField} with the additional assumption that $\callo B\subseteq\calSe B$ is a vector space\footnote{This definition can be easily seen to be equivalent to the ones given in \cite[I, p. 77]{FD88} or \cite[p. 177]{AR00}, essentially by using the Lemma \ref{LemmaBorelGluingPoncutalLimit}.}.
\end{defi}

\noindent We can consider for each $\omega\in\Omega$ the topological dual $B^\ast_{\omega}$ which is not separable in general. Thus the field $(\Omega, B^\ast_{\point})$ has no chance to satisfy the Definition \ref{DefiFieldBanachSep} but we still have the following result.

\begin{lem}\label{LemChampBanachDual}
Let $(\Omega,{\cal A})$ be a Borel space and $\ch{B}$ be a Borel field of Banach spaces. Define
$$\widetilde{\mathcal L}(\Omega,\se B^{\ast}):=\{\se{\varphi}\in \mathcal{S}(\Omega, B_{\point}^{\ast})\,|\,\omega\mapsto\langle\varphi_{\omega}, x_{\omega}\rangle:=\varphi_{\omega}(x_{\omega})\ \text{is Borel for every}\ \se{x}\in\callo{B}\}.$$
Then $\widetilde{\mathcal L}(\Omega,\se B^{\ast})$ is a vector space which satisfies the following properties.
\begin{enumerate}
\item[(i)] For every $\se \varphi\in\widetilde{\mathcal L}(\Omega,\se B^{\ast})$ the function $\omega\mapsto\|\om \varphi\|_\omega$ is Borel.
\item[(ii)] The space $\widetilde{\mathcal L}(\Omega,\se B^{\ast})$ is closed under pointwise limits and Borel gluings.
\end{enumerate}
Moreover if $\R\subseteq\Omega^2$ is an equivalence relation and $\alpha : \R\curvearrowleft (\Omega, \se B)$ is a linear isometric action, then the fiberwise adjoint maps given by $\alpha_\ast(\omega, \omega'):=(\alpha(\omega', \omega))^\ast : B^\ast_{\omega}\rightarrow B^\ast_{\omega'}$ satisfy
\begin{enumerate}
\item[(iii)] the cocycle rule $\alpha_\ast(\omega', \omega'')\circ\alpha_\ast(\omega, \omega')=\alpha_\ast(\omega, \omega'')$ for every $(\omega, \omega'), (\omega', \omega'')\in\R$,
\item[(iv)] $g(\widetilde{\mathcal L}(\Omega,\se B^{\ast}))\subseteq \widetilde{\mathcal L}(\Omega,\se B^{\ast})$ for every $g\in[\R]$.
\end{enumerate}
Thus $\alpha_\ast$ would be an action in the sense of the Definition \ref{DefiAction} if the field was a Borel field of metric spaces.
\end{lem}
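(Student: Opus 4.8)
The statement bundles together four assertions about the "dual field" $\widetilde{\mathcal L}(\Omega,\se B^{\ast})$ together with its behaviour under a linear isometric action. The strategy is to reduce everything to the fundamental family of $\callo B$ and to the scalar functions $\omega\mapsto\langle\om\varphi,\om x\rangle$, which are Borel by definition of $\widetilde{\mathcal L}(\Omega,\se B^{\ast})$.

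\textbf{Step 1: vector space structure.} This is immediate: if $\se\varphi,\se\psi\in\widetilde{\mathcal L}(\Omega,\se B^{\ast})$ and $\lambda\in\RR$, then for every $\se x\in\callo B$ we have $\langle\om\varphi+\lambda\om\psi,\om x\rangle=\langle\om\varphi,\om x\rangle+\lambda\langle\om\psi,\om x\rangle$, a sum of Borel functions, hence Borel; so $\se\varphi+\lambda\se\psi\in\widetilde{\mathcal L}(\Omega,\se B^{\ast})$.

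\textbf{Step 2: assertion (i).} Fix a fundamental family $\D=\{\se x^n\}_{n\ge1}\subseteq\callo B$ whose pointwise span (over $\QQ$, say) is dense in every fibre. Then for each $\omega$,
$$\|\om\varphi\|_\omega=\sup\{\,|\langle\om\varphi,\om x^n\rangle|\ \mid\ n\ge1,\ \|\om x^n\|_\omega\le1\,\},$$
and since $\omega\mapsto\|\om x^n\|_\omega$ and $\omega\mapsto\langle\om\varphi,\om x^n\rangle$ are Borel for each $n$, the map $\omega\mapsto\|\om\varphi\|_\omega$ is a countable supremum of Borel functions, hence Borel. (One must be slightly careful to use a countable dense subset of the unit ball rather than the unit ball itself; rescaling the $\se x^n$ by Borel functions as in Example \ref{ExampOpenBall} handles this.)

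\textbf{Step 3: assertion (ii).} If $\se\varphi=\lim_{n\to\infty}\se\varphi^n$ pointwise with each $\se\varphi^n\in\widetilde{\mathcal L}(\Omega,\se B^{\ast})$, then for every $\se x\in\callo B$ we have $\langle\om\varphi,\om x\rangle=\lim_n\langle\om\varphi^n,\om x\rangle$ (weak-$\ast$ convergence follows from norm convergence in $B^\ast_\omega$), a pointwise limit of Borel functions, hence Borel; so $\se\varphi\in\widetilde{\mathcal L}(\Omega,\se B^{\ast})$. Closure under countable Borel gluings is the same argument applied piecewise over the partition. The only subtlety is clarifying in what sense $\se\varphi^n\to\se\varphi$ "pointwise" in the non-separable field $(\Omega,B^\ast_{\point})$ — the natural reading here is fibrewise norm convergence, and that is what makes the scalar evaluations converge.

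\textbf{Step 4: assertions (iii) and (iv) — the action.} The cocycle rule (iii) is a formal computation: since $\alpha_\ast(\omega,\omega')=(\alpha(\omega',\omega))^\ast$, the identity $\alpha_\ast(\omega',\omega'')\circ\alpha_\ast(\omega,\omega')=(\alpha(\omega'',\omega')\circ\alpha(\omega',\omega))^{\ast\ {\rm (reversed)}}$... more precisely, one dualizes the cocycle rule $\alpha(\omega',\omega'')\circ\alpha(\omega,\omega')=\alpha(\omega,\omega'')$ for $\alpha$ and uses that taking adjoints reverses composition, together with the fact that $\alpha(\omega',\omega)=\alpha(\omega,\omega')^{-1}$ (each $\alpha(\omega,\omega')$ is a bijective isometry). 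For (iv): given $g\in[\R]$ and $\se\varphi\in\widetilde{\mathcal L}(\Omega,\se B^{\ast})$, the section $g\se\varphi$ is defined by $(g\se\varphi)_\omega=\alpha_\ast(g^{-1}\omega,\omega)\varphi_{g^{-1}\omega}=(\alpha(\omega,g^{-1}\omega))^\ast\varphi_{g^{-1}\omega}$, and one computes, for $\se x\in\callo B$,
$$\langle (g\se\varphi)_\omega,\om x\rangle=\langle \varphi_{g^{-1}\omega},\alpha(\omega,g^{-1}\omega)\om x\rangle=\langle\varphi_{g^{-1}\omega},(g^{-1}\se x)_{g^{-1}\omega}\rangle,$$
which equals $\big(\se\varphi(g^{-1}\se x)\big)\circ g^{-1}$ evaluated at $\omega$. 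Since $g^{-1}\se x\in\callo B$ (as $\alpha$ is an action), $\se\varphi(g^{-1}\se x)$ is Borel, and precomposing a Borel function with the Borel automorphism $g^{-1}$ of $\Omega$ keeps it Borel; hence $g\se\varphi\in\widetilde{\mathcal L}(\Omega,\se B^{\ast})$. This is exactly the computation already carried out in the proofs of Lemma \ref{LemActionC} and Lemma \ref{LemActionBd}, transcribed to the dual setting.

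\textbf{Main obstacle.} None of the steps is deep; the only genuine care needed is in Steps 2 and 3, namely handling the non-separability of the fibres $B^\ast_\omega$ — one cannot invoke Definition \ref{DefiBorelField} directly, so the supremum in Step 2 must be taken over a countable set of sections (obtained from $\D$ by Borel rescaling) and one must verify that this countable supremum genuinely computes the fibre norm, which is where the density of the fundamental family in each $\om B$ is used. Everything else is bookkeeping with the cocycle identity and the definition of the Borel structure.
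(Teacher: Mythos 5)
Your proof is correct, and it is in fact more self-contained than the paper's own argument: for assertions (i) and (ii) the authors simply cite \cite[A.3.6 \& A.3.7, p.~179]{AR00}, they declare (iii) obvious, and for (iv) they refer to the computation in the proof of Lemma \ref{LemActionC} --- which is precisely the computation you reproduce in your Step 4, with $\se\varphi(g^{-1}\se x)\circ g^{-1}$ playing the same role as $\se f(g^{-1}\se x)\circ g^{-1}$ there. Your Steps 2 and 3 supply the standard arguments hiding behind the citation: writing $\|\om\varphi\|_\omega$ as a countable supremum of the Borel functions $|\langle\om\varphi,\om x^n\rangle|/\|\om x^n\|_\omega$ over the Borel set of indices with $\om x^n\neq0$ (this uses the density of the fundamental family in each fibre and the Borelness of $\omega\mapsto\|\om x^n\|_\omega$), and the observation that fibrewise norm convergence of the functionals forces pointwise convergence of the scalar evaluations. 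The one point that genuinely requires care --- that the supremum computing the dual norm must range over a countable, Borel-indexed family of sections rather than over the $\omega$-dependent and non-separable unit ball of $B^\ast_\omega$ --- is exactly the point you flag and resolve by rescaling, so nothing is missing.
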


\begin{proof}
The properties (i) and (ii) are proved in \cite[A.3.6 \& A.3.7, p. 179]{AR00}, the property (iii) is obvious and the property (iv) can be proved exactly as in Lemma \ref{LemActionC}.
\end{proof}

\begin{defi}
Let $(\Omega,{\cal A},\mu)$ be a standard probability space and $\ch{B}$ be a Borel field of Banach spaces.
\begin{enumerate}
\item[(i)] Define
$$\callun{B}:=\{\se{x}\in\callo{B}\,|\,\|\se{x}\|_1:=\textstyle\int_{\Omega}\|x_{\omega}\|_{\omega}d\mu(\omega)<\infty\}$$
and $\lun{B}:=\callun{B}/=_{\mu\text{-a.e.}}$. Then $\lun{B}$ endowed with the norm $\|\;\|_1$ is a separable Banach space.
\item[(ii)] Define
$$\widetilde{\mathcal L}^\infty(\Omega,\se B^{\ast}):=\{\se{\varphi}\in\widetilde{\mathcal L}(\Omega,\se B^\ast)\,|\, \|\se{\ph}\|_{\point}:\omega\mapsto\|\ph_{\omega}\|_{\omega}\in {\mathcal L}^{\infty}(\Omega, \mathbf{R})\}.$$
and $\widetilde{L}^\infty(\Omega,\se{B^\ast}):=\widetilde{\mathcal L}^\infty(\Omega,\se B^{\ast})/=_{\mu\text{-a.e.}}$. Then $\widetilde{L}^\infty(\Omega,\se B^{\ast})$ is a Banach space when it is endowed with the norm $\|\;\|_{\infty}$ where $\|\se\ph\|_\infty$ is the $\infty$-norm of the function $\|\se\ph\|_{\point}$.
\end{enumerate}
For a detailed proof of this two assertions - which are close to the ones of the trivial field case - we refer to \cite{And10} or \cite{Hen10}.
\end{defi}

\noindent In this context, the following result holds.

\begin{prop}[\cite{AR00}, Prop. A.3.9, pp. 179-180]\label{PropLinftyDualLun}
Let $(\Omega,{\cal A}, \mu)$ be a standard probability space and $\ch{B}$ be a Borel field of Banach spaces. Then there exists an isometric isomorphism between $\widetilde L^\infty(\Omega,\se B^\ast)$ and $\big(\lun{B}\big)^{\ast}$ given by
$$\begin{array}{cccccl}
\widetilde L^\infty(\Omega,\se B^\ast)&\longrightarrow&\big(\lun{B}\big)^{\ast}&&&\\
\text{}[{\se \varphi}]&\mapsto&\langle[\se{\varphi}],\cdot \rangle&:\ \ \lun{B}&\rightarrow&\mathbf{R}\\
&&&[\se{x}]&\mapsto&\langle[\se{\varphi}],
[\se{x}]\rangle=\int_{\Omega}\langle \varphi_{\omega},
x_{\omega}\rangle d\mu(\omega).
\end{array}$$
\end{prop}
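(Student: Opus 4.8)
Since this is the field version of the classical identification $L^1(\Omega,B)^\ast\cong L^\infty(\Omega,B^\ast)$ — and the fibres $\om B^\ast$ need not be separable, which is why the pointwise weak-$\ast$ space $\widetilde L^\infty(\Omega,\se B^\ast)$, with the stability properties of Lemma~\ref{LemChampBanachDual}, is the right object — I would establish that the displayed map $[\se\varphi]\mapsto\langle[\se\varphi],\cdot\rangle$ is (a) a well-defined linear contraction, (b) isometric (hence injective), and (c) surjective. For (a): if $\se\varphi\in\widetilde{\mathcal L}^\infty(\Omega,\se B^\ast)$ and $\se x\in\callun B$, then $\omega\mapsto\langle\om\varphi,\om x\rangle$ is Borel by the definition of $\widetilde{\mathcal L}(\Omega,\se B^\ast)$ and $|\langle\om\varphi,\om x\rangle|\le\|\om\varphi\|_\omega\|\om x\|_\omega\le\|\se\varphi\|_\infty\|\om x\|_\omega$, which is $\mu$-integrable; so the integral converges, depends only on the two classes, is linear in $[\se x]$, and has absolute value $\le\|\se\varphi\|_\infty\|[\se x]\|_1$.

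For (b), fix $\varepsilon$ with $0<\varepsilon<\|\se\varphi\|_\infty$ (the case $\|\se\varphi\|_\infty=0$ being trivial). Since $\|\se\varphi\|_\infty$ is the essential supremum of the Borel function $\omega\mapsto\|\om\varphi\|_\omega$ (Lemma~\ref{LemChampBanachDual}(i)), the set $A:=\{\omega\mid\|\om\varphi\|_\omega>\|\se\varphi\|_\infty-\varepsilon\}$ has positive measure. Fix a fundamental family $\D=\{\se x^n\}_{n\ge1}$ of $\callo B$; since $\{\om{x^n}\}_{n\ge1}$ is dense in $\om B$ and $\|\om\varphi\|_\omega=\sup\{\langle\om\varphi,y\rangle\mid y\in\om B,\,\|y\|_\omega<1\}$, for every $\omega\in A$ there is an $n$ with $\|\om{x^n}\|_\omega<1$ and $\langle\om\varphi,\om{x^n}\rangle>\|\om\varphi\|_\omega-\varepsilon$. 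The function $n(\omega):=\min\{n\mid\|\om{x^n}\|_\omega<1\text{ and }\langle\om\varphi,\om{x^n}\rangle-\|\om\varphi\|_\omega>-\varepsilon\}$ is then well defined on $A$ and Borel, its level sets being finite boolean combinations of the Borel sets $\{\|\om{x^n}\|_\omega<1\}$ and $\{\langle\om\varphi,\om{x^n}\rangle-\|\om\varphi\|_\omega>-\varepsilon\}$. The section $\se x$ with $\om x:=\om{x^{n(\omega)}}$ on $A$ and $\om x:=0$ otherwise is a Borel section in $\callun B$ with $\|\se x\|_1\le\mu(A)$, and $\langle[\se\varphi],[\se x]\rangle=\int_A\langle\om\varphi,\om x\rangle\,d\mu\ge(\|\se\varphi\|_\infty-2\varepsilon)\,\mu(A)$; hence the operator norm of $\langle[\se\varphi],\cdot\rangle$ is at least $\|\se\varphi\|_\infty-2\varepsilon$. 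Letting $\varepsilon\to0$ and combining with (a) shows the map is isometric.

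For (c), let $\Lambda\in(\lun B)^\ast$. With $\Omega_{n,k}:=\{\omega\mid k-1\le\|\om{x^n}\|_\omega<k\}$, each $\mathbf{1}_{\Omega_{n,k}}\se x^n$ lies in $\callun B$ (as $\mu$ is a probability measure), so applying Radon--Nikodym to the finite signed measures $A\mapsto\Lambda([\mathbf{1}_{A\cap\Omega_{n,k}}\se x^n])$ and summing over $k$ produces a Borel $\psi_n:\Omega\to\RR$ with $\Lambda([\mathbf{1}_A\se x^n])=\int_A\psi_n\,d\mu$ whenever $\mathbf{1}_A\se x^n\in\callun B$. For a finite $\QQ$-linear combination $\se y=\sum_i c_i\se x^{n_i}$ and any Borel $A$ with $\mathbf{1}_A\se y\in\callun B$ one has $\int_A\big|\sum_i c_i\psi_{n_i}\big|\,d\mu=\big|\Lambda([\mathbf{1}_{A^+}\se y])-\Lambda([\mathbf{1}_{A^-}\se y])\big|\le\|\Lambda\|\int_A\|\om y\|_\omega\,d\mu$, where $A^\pm$ split $A$ by the sign of $\sum_i c_i\psi_{n_i}$; letting $A$ vary yields $\big|\sum_i c_i\psi_{n_i}(\omega)\big|\le\|\Lambda\|\,\|\sum_i c_i\om{x^{n_i}}\|_\omega$ for $\mu$-a.e.\ $\omega$, and taking the union over all (countably many) such combinations we get this simultaneously off one $\mu$-null set $N$. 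Hence for $\omega\notin N$ the assignment $\om{x^n}\mapsto\psi_n(\omega)$ is well defined and extends uniquely by linearity and density to $\om\varphi\in\om B^\ast$ with $\|\om\varphi\|_\omega\le\|\Lambda\|$; set $\om\varphi:=0$ on $N$. Since $\langle\om\varphi,\om{x^n}\rangle=\psi_n(\omega)$ is Borel and every $\se x\in\callo B$ is a pointwise limit of countable Borel gluings of the $\se x^n$ (Lemma~\ref{LemmaBorelStructureFundamentalFamily}), continuity of each $\om\varphi$ makes $\omega\mapsto\langle\om\varphi,\om x\rangle$ Borel; with $\|\se\varphi\|_\point\le\|\Lambda\|$ this gives $[\se\varphi]\in\widetilde L^\infty(\Omega,\se B^\ast)$. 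Finally $\langle[\se\varphi],\cdot\rangle$ and $\Lambda$ agree on the sections $\mathbf{1}_{\Omega_{n,k}}\se x^n$ by construction, hence on their finite $\QQ$-linear span, which is dense in $\lun B$ (simple sections are dense by the separability axiom via Lemma~\ref{LemmaBorelGluingPoncutalLimit}, and each is an $L^1$-limit of such combinations); by continuity they agree on all of $\lun B$, so the map is onto.

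The routine points — independence of representatives, density of simple sections in $\lun B$, Borel regularity of the functions involved — follow from Lemmas~\ref{LemmaBorelGluingPoncutalLimit}, \ref{LemmaBorelStructureFundamentalFamily} and \ref{LemChampBanachDual}. I expect the main obstacle to be the two measure-theoretic selection/uniformity issues: producing a Borel approximate norming section in (b), and, in (c), upgrading the almost-everywhere norm estimate for each individual section to a single $\mu$-null set valid on a dense subspace of every fibre. Both are handled by working exclusively with the countable fundamental family $\D$ and its countably many rational combinations, so that only a countable union of null sets is ever discarded; this is precisely the argument carried out in \cite[A.3.9, pp.~179--180]{AR00}.
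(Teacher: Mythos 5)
The paper offers no proof of this proposition: it is quoted verbatim from \cite{AR00} (Prop.\ A.3.9) and used as a black box, so there is no in-paper argument to compare against. Your proposal is a correct, self-contained proof of the cited result and follows the standard route --- well-definedness and contractivity of the pairing, a Borel norming selection built from the countable fundamental family for the isometry, and a fibrewise Radon--Nikodym construction for surjectivity --- which is essentially the argument of \cite{AR00}, as you yourself note. The two measure-theoretic points you flag (making the approximate norming choice $\omega\mapsto n(\omega)$ Borel, and collapsing the a.e.\ bounds $|\sum_i c_i\psi_{n_i}|\le\|\Lambda\|\,\|\sum_i c_i x^{n_i}_{\scriptscriptstyle\bullet}\|$ over all rational combinations of the fundamental family into a single null set before extending to each fibre's dual) are exactly where the care is needed, and you handle both correctly by working only with countably many sections.
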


\noindent Whenever $B$ is a Banach space we use $B_{\le1}$ (resp. $B_{=1}$) to denote the closed ball (resp. sphere) of radius 1. If $A$ is a subset of $B^*$ then $\overline{A}^{\, w\ast}$ is the closure of $A$ with respect to the weak-$\ast$ topology.

\begin{defi}[\cite{AR00}, Def. 4.2.1, p. 97]\label{DefiChampcc}
Let $(\Omega,{\cal A}, \mu)$ be a standard probability space and $\ch{B}$ be a Borel field of Banach spaces.
We say that $\ch{C}$ is a Borel subfield of convex, weakly-$\ast$ compact subsets of the field of closed balls of radius 1 in the duals $(\Omega, B^{\ast}_{\point, \le1})$ if it exists a family of sections $\{\se{\varphi^n}\}_{n\ge1}\subseteq\big(\widetilde L^\infty(\Omega,\se B^\ast)\big)_{\le1}$ such that
$$C_{\omega}=\overline{\text{co}(\{\varphi_{\omega}^n\}_{n\ge1})}^{\, w\ast}\ \text{for }\mu\text{-almost every }\omega\in\Omega$$
where the closure of the convex hull is taken relatively to the weak-$\ast$ topology. Note that the set $\widetilde{L}(\Omega, \se C):=\{[\se\ph]\in \widetilde L^{\infty}(\Omega, \se B^\ast)\mid\om\ph\in\om C\text{ for } \mu\text{-almost every }\omega\in\Omega\}$ is a convex weak-$\ast$ closed subset of the unit ball $(\widetilde{L}^{\infty}(\Omega, \se B^\ast))_{\le1}$ \cite[Prop. 4.2.2, p. 97]{AR00}.
\end{defi}

\begin{rem}
It can be shown \cite{And10} that there exists $\Omega'\in\A$ of full measure and a family of metrics $\{d^\ast_{\omega}\}_{\omega\in\Omega'}$ such that $\widetilde{\mathcal L}^\infty(\Omega',\se B^\ast)$ is a Borel structure on the field $(\Omega', (B^\ast_{\point, \le 1}, d^\ast_{\point}))$. A Borel subfield of convex, weakly-$\ast$ compact subsets is then a Borel subfield in the sense of the Definition \ref{DefiBorelSubfield}.
\end{rem}

\noindent Historically \textsc{Zimmer} was the first to introduce a notion of amenability for an equivalence relation. His definition can be formulated as a particular case of the following definition - it corresponds to the case of a trivial field\footnote{To show that \textsc{Zimmer}'s definition corresponds to case of a trivial field of Banach spaces, the only thing to check is the equivalence between an action of an equivalence relation on a trivial field $(\Omega,B)$ and a Borel cocycle from $\R$ to $\text{Isom}(B)$. This can be easily done by using Corollary 1.2 of \cite{Zim78}.}.

\begin{defi}\label{DefiMoyChamps}
Let $(\Omega, \mathcal{A},\mu)$ be a standard probability space and $\R\subseteq\Omega^2$ an equivalence relation which preserves the class of $\mu$.

\noindent We say that $\R$ is amenable if for every action $\alpha$ of $\R$ by linear isometries on a Borel field of Banach spaces $\ch{B}$ and almost $\alpha_*$-invariant Borel subfield of convex, weakly-$\ast$ compact subsets
$(\Omega, \se C)$ of $(\Omega, B^\ast_{\point, \le 1})$, there exists a Borel section $\se\ph\in\widetilde{\mathcal{L}}(\Omega, \se B^\ast)$ which is almost $\R$-invariant and such that $\om\varphi\in \om C$ for almost every $\omega\in \Omega$.
\end{defi}

\noindent We'll now prove that hyperfinitness $\mu$-almost everywhere implies amenability in the sense of the Definition \ref{DefiMoyChamps}. Since \textsc{Zimmer}'s amenability is equivalent to almost hyperfinitness (\textit{cf}. \cite{CFW81} and \cite{AL91}), this would show the equivalence between all definitions. The reader familiar with the notion of measurable groupoid could also consult \cite[4.2.7]{AR00}.

\noindent Since we are in the measure theoretic context, we can suppose by hypothesis that there exists a Borel action $\mathbf{Z}\curvearrowleft\Omega$ such that $\R_{\mathbf{Z}}=\R$. In analogy with \cite[Thm. 2.1]{Zim78} we can define an  isometric representation of $\mathbf{Z}$ on $L^{1}(\Omega, \se B)$ by
$$(T(g)\se x)_{\omega}=\frac{dg_{\ast}(\mu)}{d\mu}(\omega)\cdot\alpha(g^{-1}\omega, \omega)x_{g^{-1}\omega}\quad\text{for every }g\in\mathbf{Z},\ \se x\in L^{1}(\Omega, \se B),$$
where $\frac{dg_{\ast}(\mu)}{d\mu}\in L^{1}(\Omega, \mathbf{R})$ is the Radon-Nikodym derivative. The adjoint representation $T_\ast:=(T^{-1})^\ast$ acts on $(L^{1}(\Omega, \se B))^\ast\simeq \widetilde L^{\infty}(\Omega, \se B^\ast)$. Given $\se\ph\in\widetilde L^{\infty}(\Omega, \se B^\ast)$ it is straightforward to see that
$$(T_\ast(g)\se\ph)_{\omega}=\alpha_{\ast}(g^{-1}\omega, \omega)\ph_{g^{-1}\omega}.$$
This means that the adjoint representation is given by the fiberwise adjoint action. Consequently $\mathbf{Z}$ acts by homeomorphisms on $\widetilde L^{\infty}(\Omega, \se B^\ast)$ with respect to the weak-$\ast$ topology and thus on $\widetilde{L}(\Omega, \se C)$ because $\ch{C}$ is supposed to be almost invariant. Since $\mathbf{Z}$ is amenable, there exists a fixed point $[\se\ph]$ and any representant of the class has the desired property.
\medskip

\noindent In the sequel we'll use the amenability in the following particular context.

\begin{exemp}\label{Exmoy}
Let $K$ be a compact metric space and consider the space $\mathcal{C}(K)^\ast_{\le1}$ endowed with the weakly-$\ast$ topology. The application
$$\begin{array}{cccl}\delta:&K&\tend&\C(K)_{\le1}^\ast\\
&x&\mapsto&[\delta_x:f\mapsto f(x)]\end{array}$$
is a homeomorphism onto its image such that $\delta_x\in\mathcal{C}(K)^\ast_{=1}$. Since $\mathcal{C}(K)_{\le1}^\ast$ is convex and weakly-$\ast$ compact by the Banach-Alaoglu Theorem, we have the equality $\mathcal{C}(K)_{\le1}^\ast=\overline{\text{co}(\{\pm\delta_x\}_{x\in K})}^{\,w\,\ast}$ by the Krein-Milman Theorem because the extremal points are $\{\pm\delta_x\}_{x\in K}$. Moreover the bijection
$$\text{Prob}(K)\simeq\{\varphi\in\mathcal{C}(K)_{\le1}^{\ast}\,|\,\varphi(\bold{1})=1\ \text{et}\
\forall f\in\mathcal{C}(K)\ (f\ge0\Rightarrow\varphi(f)\ge0)\}$$
given by the Riesz's representation Theorem allows us to consider the weakly-$\ast$ compact set of probabilities in the dual and it is well known that $\text{Prob}(K)=\overline{\text{co}(\{+\delta_x\}_{x\in K})}^{\,w\,\ast}$.

\noindent Now if $D\subseteq K$ is a dense subset, then using the continuity of $\delta$ we obtain $\C(K)_{\le1}^\ast=\overline{\text{co}(\{\pm\delta_{x}\}_{x\in D})}^{\,w\,\ast}$ and $\text{Prob}(K)=\overline{\text{co}(\{+\delta_{x}\}_{x\in D})}^{\,w\,\ast}$.

\noindent If $(\Omega, \mathcal{A},\mu)$ is a standard probability space and $\ch{K}$ is a Borel field of compact metric spaces, then the preceding results and the introduction of the Borel sections $\{\delta_{\se x}\}_{\se x\in\D}\subseteq\mathcal{L}^\infty(\Omega,\C(\se K)^\ast)_{\le 1}$ for a given fundamental family $\mathcal{D}\subseteq\mathcal{L}(\Omega, \se K)$ shows that the fields $(\Omega, \mathcal{C}(\se K)^\ast_{\le1})$ and $(\Omega, \text{Prob}(\se K))$ are Borel fields of convex, weakly-$\ast$ compact sets in the sense of Definition \ref{DefiChampcc}. A section $\se\pi$ is Borel if the corresponding section $\ph_{\se\pi}\in\mathcal{L}(\Omega,\C(\se K)^\ast)$, \textit{i.e.} $\ph_{\se\pi}(\se f)$ is Borel for every $\se f\in\mathcal L(\Omega,\C(\se K))$.

\noindent Suppose now that an action $\alpha : \R\curvearrowleft \ch{K}$ by homeomorphisms is given. The lemmas \ref{LemActionC} and \ref{LemChampBanachDual} allow us to define $\alpha^\ast(\omega', \omega) :=(\widetilde{\alpha}(\omega', \omega)^{-1})^\ast$.

\noindent The field $(\Omega, \text{Prob}(\se K))$ is obviously $\alpha^\ast$-invariant since if $\omp\mu\in\text{Prob}(\omp K)$ then $\alpha^\ast(\omega', \omega)\omp\mu$ is the image measure $\alpha(\omega', \omega)_\ast(\omp\mu)$. In particular, if $\R$ is amenable, then by definition there exists a Borel section $[\se\mu]\in\ \widetilde L(\Omega, \text{Prob}(\se K))$ which is $\alpha^\ast$-invariant.
\end{exemp}

\section{Proof of the Main Theorem}

\subsection{Fields of Convex Sets and Invariant Sections at Infinity}

Let $(\Omega,\A,\mu)$ be a standard probability space. Given a Borel field $\ch{X}$ of CAT(0) spaces we introduce the following notations:
$$\mathscr{S}:=\{[\se{C}]\mid[\se C]\text{is an invariant class of Borel subfield of non empty closed convex subsets}\},$$
$$\mathscr{M}:=\{[\se C]\in\mathscr S\mid\ch{C} \text{ is minimal for }\le\},$$

\begin{lem}\label{LemChain}
Let $(\Omega, \A, \mu)$ be a standard probability space, $\R$ an equivalence relation which quasi-preserves $\mu$, $\ch{X}$ a Borel field of proper metric spaces and an isometric action $\alpha : \R\curvearrowleft\ch{X}$.

\noindent If $\{[\se C^\beta]\}_{\beta\in\mathcal{B}}$ is a totally ordered family (\textit{i.e.} a chain) of $\mathscr{S}$, then there exists a countable family of indices $\{\beta_n\}_{n\ge1}\subseteq\mathcal{B}$ such that $[\se C^{\beta_{n+1}}]\le[\se C^{\beta_{n}}]$ for each $n\ge1$ and such that $\se C:=\cap_{n\ge1}\se C^{\beta_n}$ satisfies $[\se C]\in\mathscr S$ and $[\se C]\le[\se C^\beta]$ for all $\beta\in\mathcal{B}$.
\end{lem}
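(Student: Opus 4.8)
The plan is to use the complete‑lattice structure of Theorem~\ref{ThmBorelSubfieldCompleteLattice} to replace the (possibly uncountable) chain by a decreasing \emph{sequence} with the same infimum, and then to verify that the associated countable intersection still lies in $\mathscr S$. Concretely, I would first apply Theorem~\ref{ThmBorelSubfieldCompleteLattice} to $\{[\se C^\beta]\}_{\beta\in\mathcal B}$, regarded for the moment just as a family of classes of Borel subfields of closed subsets: it yields indices $\{\gamma_m\}_{m\ge1}\subseteq\mathcal B$ with $[\cap_{m\ge1}\se C^{\gamma_m}]$ equal to the infimum of $\{[\se C^\beta]\}_{\beta\in\mathcal B}$; in particular $[\cap_{m\ge1}\se C^{\gamma_m}]\le[\se C^\beta]$ for all $\beta\in\mathcal B$.

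To get a decreasing sequence with the same intersection I would use that $\le$ is total on the family: put $\beta_1:=\gamma_1$ and, inductively, let $\beta_{n+1}$ be whichever of $\beta_n,\gamma_{n+1}$ realizes $\min\{[\se C^{\beta_n}],[\se C^{\gamma_{n+1}}]\}$ (by totality one of the two attains it). Then $[\se C^{\beta_{n+1}}]\le[\se C^{\beta_n}]$ for all $n$, so $\{[\se C^{\beta_n}]\}_n$ is decreasing, and moreover $[\se C^{\beta_n}]\le[\se C^{\gamma_n}]$ for all $n$. Working fibrewise almost everywhere, the last inequality forces $[\cap_{n\ge1}\se C^{\beta_n}]\le[\se C^{\gamma_m}]$ for every $m$, whence $[\cap_{n\ge1}\se C^{\beta_n}]\le[\cap_{m\ge1}\se C^{\gamma_m}]$; conversely $[\cap_{m\ge1}\se C^{\gamma_m}]$, being the infimum of the whole family, is $\le[\se C^{\beta_n}]$ for each $n$, hence $\le[\cap_{n\ge1}\se C^{\beta_n}]$. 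So $\se C:=\cap_{n\ge1}\se C^{\beta_n}$ has $[\se C]$ equal to the infimum of $\{[\se C^\beta]\}_{\beta\in\mathcal B}$, which gives both the decreasing condition on the sequence and the lower‑bound condition $[\se C]\le[\se C^\beta]$ for all $\beta\in\mathcal B$.

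Next I would check $[\se C]\in\mathscr S$. The subfield $\se C=\cap_{n\ge1}\se C^{\beta_n}$ is a Borel subfield of closed subsets by Proposition~\ref{PropProperIntersectionClosedField} (properness makes the intersection Borel on all of $\Omega$), and its fibres are convex, being intersections of convex sets. For $\R$‑invariance: $[\R]$ acts on classes of Borel subfields by Proposition~\ref{PropActionGSecSubfield} (as $\mu$ is quasi‑invariant), and the invariance of $[\se C^{\beta_n}]$ means $g\se C^{\beta_n}=_{\mu\text{-a.e.}}\se C^{\beta_n}$ for every $g\in[\R]$ (Remark~\ref{RemInvariance}). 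Since each $\alpha(g^{-1}\omega,\omega)$ is a bijection it commutes with intersections, so $g\se C=\cap_{n\ge1}(g\se C^{\beta_n})$ holds fibrewise \emph{everywhere}; as a countable union of null sets is null, this gives $g\se C=_{\mu\text{-a.e.}}\cap_{n\ge1}\se C^{\beta_n}=\se C$, i.e. $[\se C]$ is invariant.

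The point I expect to be the real obstacle is the non‑emptiness condition built into $\mathscr S$: in a proper CAT(0) space a decreasing sequence of non‑empty closed convex subsets may have empty intersection — exactly when it escapes to infinity, i.e. when $\om d(\om x,\om C^{\beta_n})\to\infty$ for a.e.\ $\omega$ — and then the natural output is not an element of $\mathscr S$ but an invariant section at infinity, produced by the limit‑set construction of Proposition~\ref{PropLimitField}. Thus the substantive step beyond the bookkeeping above is to dispose of this alternative: once it has been set aside (it is precisely conclusion~(i) of Theorem~\ref{ThmAH}), the fibres $\om C$ are non‑empty for almost every $\omega$, and hence $[\se C]\in\mathscr S$, which completes the proof.
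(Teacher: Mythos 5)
Your argument is, at its core, the same as the paper's: apply Theorem~\ref{ThmBorelSubfieldCompleteLattice} to extract a countable subfamily whose intersection realizes the infimum, then use the total order on the chain to pass to the running minima $\beta_n=\min\{\gamma_1,\dots,\gamma_n\}$, which makes the sequence decreasing without changing the intersection, and obtain invariance of the class from Proposition~\ref{PropActionGSecSubfield} and Remark~\ref{RemInvariance}. The paper's proof is exactly this, compressed into three lines.

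The one point on which you part company with the paper is the non-emptiness clause of $\mathscr S$, and there your diagnosis is right but your cure is not. You are correct that a decreasing sequence of non-empty closed convex sets in a proper space can have empty intersection, so ``$[\se C]\in\mathscr S$'' is not provable from the hypotheses as stated; the paper's own proof is simply silent on this. But you cannot ``set the alternative aside'' by appealing to conclusion (i) of Theorem~\ref{ThmAH}: this lemma is an ingredient of Theorem~\ref{Thmminempty}, which feeds into Theorem~\ref{ThmAH}, so that appeal is circular, and in any case nothing in the hypotheses rules out the escape to infinity. What actually happens in the paper is that the conclusion is only ever used contrapositively: in Theorem~\ref{Thmminempty} the lemma is applied to a chain with no lower bound in $\mathscr S$, and one concludes that $[\se C]\notin\mathscr S$; since all the other defining properties of $\mathscr S$ (Borel, closed, convex, invariant, lower bound) have been verified --- which is precisely the content of your first three paragraphs --- the only possible failure is non-emptiness, and ergodicity then forces $\om C=\emptyset$ almost everywhere, at which point Proposition~\ref{PropLimitField} takes over. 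So the honest formulation of what you have proved (and of what the paper actually uses) is: $\se C$ is an invariant Borel subfield of closed convex sets with $[\se C]\le[\se C^\beta]$ for all $\beta$, and $[\se C]\in\mathscr S$ \emph{if and only if} $\om C\ne\emptyset$ almost everywhere. State that, drop the forward reference to the main theorem, and your write-up is complete --- and indeed more precise than the paper's.
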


\begin{proof}
By Theorem \ref{ThmBorelSubfieldCompleteLattice} there exists a countable family of indices $\{b_n\}_{n\ge1}\subseteq\mathcal{B}$ such that the Borel subfield
$$\se C:=\cap_{n\ge1}\se C^{b_n}$$
is such that $[\se C]\le[\se C^\beta]$, $\beta\in\mathcal{B}$. By setting $\beta_n:=\min\{b_1,\ldots,b_n\}$ (where the minimum is taken for the induced order by the chain) it follows that $\se C=\cap_{n\ge1}\se C^{\beta_n}$ satisfies our conditions (the invariance of the class follows from Remark \ref{RemInvariance}).
 \end{proof}

\begin{thm}\label{Thmminempty}
Let $(\Omega, \A, \mu)$ be a standard probability space, $\R$ an ergodic equivalence relation which quasi-preserves $\mu$, $\ch{X}$ a Borel field of proper unbounded CAT(0) spaces with finite covering dimension.

\noindent If $\alpha : \R\curvearrowleft\ch{X}$ is an isometric action such that $\M=\emptyset$, then there exists an almost invariant section $\se \xi\in\mathcal{L}(\Omega, \partial\se X)$.
\end{thm}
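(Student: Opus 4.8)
The plan is to run the Adams--Ballmann dichotomy in its degenerate branch: since no minimal invariant convex subfield exists, I will build a descending sequence of invariant convex subfields whose fibrewise intersection is empty, pass to the associated Borel field of limit sets at infinity (which has finite geometric dimension, hence circumradius $<\pi/2$), and take its unique circumcentre.

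\textbf{Step 1: a descending sequence with empty intersection.} One has $[\se X]\in\mathscr S$ (the fibres being non-empty, closed and convex in themselves, and $\alpha$ mapping fibres onto fibres), so $\mathscr S\neq\emptyset$. By hypothesis $(\mathscr S,\le)$ has no minimal element, so Zorn's lemma applied to the opposite order yields a chain $\{[\se C^\beta]\}_{\beta\in\B}\subseteq\mathscr S$ admitting no lower bound inside $\mathscr S$. By Theorem \ref{ThmBorelSubfieldCompleteLattice} the infimum of this chain in the complete lattice of classes of Borel subfields of closed subsets is $[\cap_{n\ge1}\se C^{\beta_n}]$ for a countable $\{\beta_n\}\subseteq\B$; since the family is totally ordered I may, after replacing $\se C^{\beta_n}$ by $\bigcap_{k\le n}\se C^{\beta_k}$, assume $C^{n+1}_\omega\subseteq C^n_\omega$ for every $\omega$. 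Set $\se C^\infty:=\cap_{n\ge1}\se C^{\beta_n}$: it is a Borel subfield by Proposition \ref{PropProperIntersectionClosedField}, it is convex, and its class is invariant (Remark \ref{RemInvariance}). If $C^\infty_\omega$ were non-empty for almost every $\omega$, then $[\se C^\infty]\in\mathscr S$ would be a lower bound for the chain inside $\mathscr S$, which is impossible; hence $C^\infty_\omega=\emptyset$ on a set of positive measure. That set is, up to a null set, $\R$-invariant (on a conull invariant set one has $\alpha(\omega,\omega')C^{\beta_n}_\omega=C^{\beta_n}_{\omega'}$ for all $n$ at once), so by ergodicity it is conull. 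Relabelling and restricting to a conull invariant Borel set $\Omega_0$ (legitimate by Remark \ref{RemInvariance}), I obtain invariant Borel subfields $\{\se C^n\}_{n\ge1}$ of non-empty closed convex subsets with $C^{n+1}_\omega\subseteq C^n_\omega$ and $\cap_{n\ge1}C^n_\omega=\emptyset$ for every $\omega\in\Omega_0$.

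\textbf{Steps 2 and 3: the point at infinity and its invariance.} Since $\se X$ has finite covering dimension and the fibres are proper and unbounded, Proposition \ref{PropLimitField} applies to $\{\se C^n\}$ over $\Omega_0$: the subfield $\se L$ with $L_\omega$ the limit set at infinity of $\{C^n_\omega\}_{n\ge1}$ is a Borel subfield of $\partial\se X$, and because $\rad(L_\omega)<\pi/2$ for every $\omega$ (Theorem \ref{ThmFNS}), the circumcentre section $\se\xi:=c_{\se L}$ lies in $\mathcal L(\Omega_0,\partial\se X)$ by Theorem \ref{ThmCircum}; I extend $\se\xi$ in a Borel way to all of $\Omega$. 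For invariance, fix $(\omega,\omega')\in\R$ with $\omega,\omega'\in\Omega_0$: the isometry $\alpha(\omega,\omega'):\om X\to\omp X$ sends $C^n_\omega$ onto $C^n_{\omega'}$ for every $n$, hence carries the datum $(\om X,\{C^n_\omega\}_n)$ onto $(\omp X,\{C^n_{\omega'}\}_n)$. As the limit set at infinity is defined canonically from this datum and independently of the chosen base point (cf. the definition preceding Lemma \ref{LemDefLimitSet}), the boundary extension $\widetilde\alpha(\omega,\omega')$ of Lemma \ref{LemActionBd} — a homeomorphism of $\overline{\om X}$ onto $\overline{\omp X}$ carrying $\partial\om X$ onto $\partial\omp X$ and isometric for the angular metrics — maps $L_\omega$ onto $L_{\omega'}$. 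Since circumradius and circumcentre for the angular metric are preserved by angular isometries and $L_\omega$ has a \emph{unique} circumcentre, $\widetilde\alpha(\omega,\omega')\om\xi=c_{L_{\omega'}}=\omp\xi$. Thus $\se\xi\in\mathcal L(\Omega,\partial\se X)$ is almost $\R$-invariant, which is the conclusion.

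\textbf{Main obstacle.} The one genuinely delicate point is Step 1 — converting the set-theoretic statement $\M=\emptyset$ into an honest countable sequence whose fibrewise intersection is \emph{everywhere} empty. Three facts have to cooperate: completeness of the lattice of closed subfields (Theorem \ref{ThmBorelSubfieldCompleteLattice}), to realise the infimum of a chain by a countable sub-intersection; properness of the fibres (Proposition \ref{PropProperIntersectionClosedField}), so that this intersection is again a Borel subfield; and ergodicity of $\R$, to promote ``empty on a set of positive measure'' to ``empty almost everywhere''. After that, Steps 2 and 3 are a fairly mechanical transcription of Adams and Ballmann's limit-set-at-infinity construction into the setting of Borel fields, the required measurability being precisely what Proposition \ref{PropLimitField} and Theorem \ref{ThmCircum} provide.
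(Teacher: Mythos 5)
Your proposal is correct and follows essentially the same route as the paper: the transposed Zorn's lemma produces a chain in $\mathscr S$ without lower bound, the complete-lattice theorem (packaged in the paper as Lemma \ref{LemChain}) extracts a decreasing countable subchain whose intersection is empty on a conull invariant set by ergodicity, and Proposition \ref{PropLimitField} together with Theorem \ref{ThmCircum} then yields the Borel circumcentre section of the limit sets at infinity, whose invariance follows from the equivariance of the whole construction under the boundary extension of $\alpha$. The only cosmetic difference is that you make the subchain decreasing by taking finite intersections where the paper takes minima along the chain order, and you spell out the invariance of $c_{\se L}$ via uniqueness of the circumcentre where the paper leaves it implicit after computing $\widetilde\alpha(\omega,\omega')L_\omega=L_{\omega'}$.
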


\begin{proof} The proof proceeds in two steps.
\smallskip

\noindent (i) First we show that under the hypothesis there exists a sequence $\{[\se C^{\beta_n}]\}_{n\ge1}\subseteq[\mathscr{S}]$ such that the following conditions hold:
\begin{enumerate}
\item[(a)] $[\se C^{\beta_{n+1}}]\le[\se C^{\beta n}]$ for each $n\ge1$,
\item[(b)] $[\cap_{n\ge1}\se C^{\beta_n}]$ is the class of the empty subfield\footnote{It is the field $\se A$ defined by $\om A=\emptyset$ for every $\omega\in\Omega$.}.
\end{enumerate}
By the transposition of the Zorn's Lemma there exists a chain $\{[\se C^{\beta}]\}_{\beta\in\mathcal{B}}\in\mathscr{S}$ without lower bound in $\mathscr{S}$. So by the Lemma \ref{LemChain} applied to this chain we conclude that the Borel subfield $[\se C]:=[\cap_{n\ge1}\se C^{\beta_n}]$ is not in $\mathscr{S}$. This means that $\mu(\{\omega\in\Omega\mid C_\omega\neq\emptyset\})\neq 1$. We then show that this set is of measure null. By ergodicity the invariant set\footnote{See again Remark \ref{RemInvariance}.} $\{\omega\in\Omega\mid\om C\neq\emptyset\}$ is of measure one or null. Since the first possibility is impossible, we conclude that $[\se C]$ is the class of the empty field.
\smallskip

\noindent (ii) Consider the subchain $\{\se C^{\beta_n}\}_{n\ge1}$ and the subfield $\se C$ given in (i). There exists $\Omega'\in\A$ of measure one such that the inclusion $\om C^{\beta_n}\supseteq\om C^{\beta_{n+1}}$ holds and that $\om C=\emptyset$ for every $\omega\in\Omega'$. So by Proposition \ref{PropLimitField} we can consider the Borel field $(\Omega', \se L)\le(\Omega', \partial\se X)$ of limit sets at infinity of the subchain and this field has an unique Borel section of circumcenters
$$\se\xi:=c_{\se L}\in\mathcal{L}(\Omega', \partial\se X).$$
To prove the invariance of $[\se\xi]$ it is enough to show that $(\Omega', \se L)$ is almost-invariant. Recall that for every $\omega\in\Omega'$ and $x\in\om X$
$$\om L:=\overline{\om i\big(\{\pi_{\om C^{\beta_n}}(x)\}_{n\ge1}\big)}\cap\partial\om X.$$
So we have

\begin{eqnarray*}
\widetilde\alpha(\omega,\omega')\om L&\stackrel{\widetilde{\alpha}\text{ homeo.}}{=}&\overline{\widetilde\alpha(\omega,\omega')i_{\omega}(\{\pi_{C^{\beta_n}_{\omega}}(
x)\}_{n\ge1})}\cap\bd\omp X\\&\stackrel{\widetilde
\alpha\ \text{ext. of }\alpha}{=}&\overline{\omp
i\big(\alpha(\omega,\omega')(\{\pi_{C^{\beta_n}_{\omega}}(x)\}_{n\ge1})\big)}\cap\bd\omp X\\
&\stackrel{\alpha\ \text{isom.}}{=}&\overline{\omp
i(\{\pi_{\alpha(\omega,\omega')(C^{\beta_n}_{\omega})}(\alpha(\omega,\omega')x)\}_{n\ge1})}\cap\bd\omp
X\\
&\stackrel{\text{inv.}}{=}&\overline{\omp i\big(\{\pi_{\omp C^{\beta_n}}(\alpha(\omega,\omega')x)\}_{n\ge1}\big)}\cap\bd \omp X=\omp L.
\end{eqnarray*}
\end{proof}

\subsection{$\R$-Quasi-Invariant Sections}

\begin{defi}
Let $(\Omega,\A)$ be a standard Borel space, $\R\subseteq\Omega^2$ be a Borel equivalence relation and $\ch{X}$ be a Borel field of metrics spaces. Assume that the relation $\R$ acts on $\ch{X}$ by isometries. We say that a section $\se f\in\mathcal{S}(\Omega, \mathcal{F}(\se X))$ is $\R$-quasi-invariant (or simply invariant whenever the relation can clearly be identified) if there exists $c:\R\rightarrow\mathbf{R}$ such that
\begin{equation}\label{EqRinv}
\widetilde{\alpha}(\omega, \omega')\om f:=\om f\circ\alpha(\omega', \omega)=\omp f+c(\omega, \omega')\quad\text{for every }(\omega, \omega')\in\R.
\end{equation}
It can easily be checked that $c$ is a cocycle, \textit{i.e.} $c$ satisfies $c(\omega, \omega'')=c(\omega, \omega')+c(\omega', \omega'')$.
\end{defi}

\begin{lem}\label{LemFctQI}
Let $(\Omega,\A)$ be a standard Borel space $\R\subseteq\Omega^2$ be a Borel equivalence relation and $\ch{X}$ be a Borel field of proper CAT(0) spaces. Assume that $\R$ acts on $\ch{X}$ by isometries. If $\se f\in\mathcal{L}(\Omega, \mathcal{C}(\se X))$ is a quasi-invariant section such that $\om f$ is convex for every $\omega\in\Omega$, then the following assertions are verified.
\begin{enumerate}
\item[(i)] The sets
$$\begin{array}{rcl}\ominfini&:=&\omdansom{\inf\om f=-\infty},\\
\ominf&:=&\omdansom{\inf\om f>-\infty\text{ and is not attained}},\\
\ommin&:=&\omdansom{\inf\om f\text{ is attained}}.\end{array}$$
are Borel and invariant.
\item[(ii)] The subfield $(\se f\mid_{\ommin}-\min(\se f\mid_{\ommin}))^{-1}(\{0\})$ is a Borel subfield of non empty closed and convex sets of $(\ommin,\se X)$ which is $\R\mid_{\ommin}$-invariant.
\item[(iii)] We set $\Omega_{\inf}:=\ominfini\cup\ominf$ and we assume that $\om X$ is of finite covering dimension for every $\omega\in\Omega_{\inf}$. Then there exists a section $\se\xi\in\mathcal{L}(\Omega{_{\inf}}, \bd\se X)$ which is $\R\mid_{\Omega_{\inf}}$-invariant.
\end{enumerate}
\end{lem}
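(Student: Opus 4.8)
The plan is to derive everything from two facts: first, for a convex continuous section $\se f\in\mathcal L(\Omega,\C(\se X))$ the sublevel subfields $\se f^{-1}([\se a,\se b])$ — and, more generally, $\se f^{-1}((-\infty,\se b])$ by the argument of step~2 of the proof of Proposition \ref{PropProperIntersectionClosedField} — are Borel subfields of closed convex sets; second, by quasi-invariance \eqref{EqRinv} the values of $\se f$ along an $\R$-orbit are only shifted by the cocycle $c$, so in particular $\inf\om f=\inf\omp f+c(\omega,\omega')$ whenever $(\omega,\omega')\in\R$ (since $\alpha(\omega',\omega)$ is a bijection), and $\om f$ attains its infimum at $x_0$ iff $\omp f$ attains it at $\alpha(\omega,\omega')x_0$.

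For (i): continuity of $\om f$ and density of a fundamental family $\D=\{\se x^n\}_{n\ge1}$ give $\inf\om f=\inf_{n\ge1}\se f(\se x^n)(\omega)$, so $\omega\mapsto\inf\om f$ is Borel into $[-\infty,\infty)$ and $\ominfini$ is Borel. On $\Omega\setminus\ominfini$ put $m(\omega):=\inf\om f\in\RR$; since $\om f\ge m(\omega)$ there, $\se F^k:=\se f^{-1}([m,m+1/k])$ is a Borel subfield of non-empty closed convex sets, hence by Proposition \ref{PropProperIntersectionClosedField} the intersection $\se F:=\bigcap_k\se F^k$, whose fibre is the minimizing set $\om f^{-1}(\{m(\omega)\})$, is a Borel subfield. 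Its base is exactly $\ommin$, which is therefore Borel, and so is $\ominf=(\Omega\setminus\ominfini)\setminus\ommin$. The two transfer facts above show that each of $\ominfini,\ominf,\ommin$ is a union of $\R$-classes. For (ii): the subfield $(\se f\mid_{\ommin}-\min(\se f\mid_{\ommin}))^{-1}(\{0\})$ is precisely $\se F$ over $\ommin$, hence Borel, closed, with non-empty fibres, and convex since $\om f^{-1}(\{\min\om f\})$ is convex for the convex function $\om f$; moreover \eqref{EqRinv} together with $m(\omega')=m(\omega)-c(\omega,\omega')$ yields $\alpha(\omega,\omega')(\om f^{-1}(\{m(\omega)\}))=\omp f^{-1}(\{m(\omega')\})$, i.e.\ $\R\mid_{\ommin}$-invariance.

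For (iii), I would split the invariant Borel set $\Omega_{\inf}=\ominf\sqcup\ominfini$ and produce an invariant Borel section at infinity on each piece. On $\ominf$: the subfields $\se C^n:=\se f^{-1}([m,m+1/n])$ form a decreasing sequence of Borel subfields of non-empty closed convex sets with $\bigcap_n\om C^n=\om f^{-1}(\{m(\omega)\})=\emptyset$ (the infimum is not attained), and they are $\R\mid_{\ominf}$-invariant because $m$ shifts by $-c(\omega,\omega')$ while the tolerance $1/n$ is fixed; since $\om X$ has finite covering dimension on $\ominf$, Proposition \ref{PropLimitField} yields a Borel $\R\mid_{\ominf}$-invariant section $\se\xi:=c_{\se L}$ of circumcenters of the limit sets at infinity. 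On $\ominfini$: I would use instead the $\RR$-indexed family $\om C^\alpha:=\om f^{-1}((-\infty,-\alpha])$, $\alpha\in\RR$, with non-empty closed convex fibres, satisfying $\om C^\beta=\bigcap_{\alpha<\beta}\om C^\alpha$ and $\bigcap_{\alpha\in\RR}\om C^\alpha=\emptyset$ (here $\inf\om f=-\infty$); the version of Proposition \ref{PropLimitField} for such families (see the remark following it) gives a Borel section $\se\xi:=c_{\se L}$. The extension $\widetilde\alpha(\omega,\omega')$ carries $\{\om C^\alpha\}_{\alpha\in\RR}$ onto $\{\omp f^{-1}((-\infty,-\alpha-c(\omega,\omega')])\}_{\alpha\in\RR}=\{\omp C^\beta\}_{\beta\in\RR}$, the same unordered family of subsets of $\omp X$, so $\widetilde\alpha(\omega,\omega')(\om L)=\omp L$ and $\se\xi$ is $\R\mid_{\ominfini}$-invariant. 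Gluing the two Borel sections along $\ominf\sqcup\ominfini$ (Lemma \ref{LemmaBorelGluingPoncutalLimit}) produces $\se\xi\in\mathcal L(\Omega_{\inf},\bd\se X)$, whose $\R\mid_{\Omega_{\inf}}$-invariance holds because every $\R\mid_{\Omega_{\inf}}$-related pair lies inside one of the two invariant pieces.

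The step I expect to be the real obstacle is the invariance on $\ominfini$: unlike on $\ominf$, there is no finite infimum to absorb the cocycle, so the naive sublevel sequences are not themselves invariant. The remedy above is to pass to the $\RR$-indexed family, for which a constant shift of the thresholds is merely a reparametrization and so leaves the unordered family $\{\om C^\alpha\}$ — hence its limit set at infinity — unchanged; one then only needs that limit sets of such generalized families are Borel subfields carrying a unique circumcenter, which is exactly the content of Proposition \ref{PropLimitField} and the remark following it.
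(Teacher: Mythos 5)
Your proposal is correct and follows essentially the same route as the paper: the same sublevel-set subfields $\se f^{-1}([\inf\se f,\inf\se f+1/n])$ on $\ominf$, the same $\RR$-indexed family $\om f^{-1}(]-\infty,-\alpha])$ on $\ominfini$ with the cocycle acting as a mere reparametrization, and the same gluing of the two circumcenter sections via Proposition \ref{PropLimitField}. The only (harmless) variation is in part (i), where you obtain the Borelness of $\ommin$ as the base of the Borel subfield $\bigcap_k\se f^{-1}([\inf\se f,\inf\se f+1/k])$, while the paper writes $\ommin=\bigcup_{R}(\inf\se f\mid_{B(\se x^0,R)}-\inf\se f)^{-1}(\{0\})$; both arguments are valid.
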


\begin{proof}
(i) The convexity assumption is not necessary to prove this assertion. The section $\inf\se f$ is Borel since by continuity $\inf\se f=\inf_{\se x\in\D}\se f(\se x)$ where $\D$ is a fundamental family and thus $\Omega_{\inf=-\infty}=(\inf \se f)^{-1}(\{-\infty\})$. Now we fix $\se x^0\in\mathcal{L}(\Omega, \se X)$. Because $\se f\in\mathcal L(\Omega,\C(\se X))$ we have
$$\Omega_{\min}=\bigcup_{R\in\mathbf{N}}(\inf\se f\mid_{B(\se x^0, R)}-\inf\se f)^{-1}(\{0\})\in\A$$
and thus $\Omega_{\inf>-\infty}=\Omega\setminus(\Omega_{\inf=-\infty}\cup\Omega_{\min})\in\A$. The invariance of these sets follows directly from the equality (\ref{EqRinv}).
\smallskip

\noindent (ii) Since $\se f$ is quasi-invariant we have $\min\om f=\min\omp f+c(\omega, \omega')$ and thus
$$\widetilde\alpha(\omega,\omega')(\om f-\min\om f)=\omp f+c(\omega,\omega')-\min\om f=\omp f-\min\omp f.$$
Consequently the section $\se{\widetilde f}\in\widetilde{\mathcal{L}}(\Omega_{\min},\C(\se X))$ defined by $\om{\widetilde f}:=\om f-\min\om f$ for every $\omega\in\Omega_{\min}$ is $\R\mid_{\Omega_{\min}}$-invariant and such that $(\se{\widetilde f})^{-1}(\{0\})$ has the required properties.
\smallskip

\noindent (iii) If $\omega\in\Omega_{\inf>-\infty}$ we define $\om{\widetilde f}:=\om f-\inf\om f$ and $\om C^n:=(\om{\widetilde f})^{-1}([0,1/n])$. The sequence $\{\se C^n\}_{n\ge1}$ satisfies the hypothesis of the Proposition \ref{PropLimitField} and thus the section of circumcenter of the limit sets at infinity is $\R\mid_{\Omega_{\inf>-\infty}}$-invariant. If $\omega\in\Omega_{\inf=-\infty}$ we consider the generalized sequence $\{\se C^\beta\}_{\beta\in\mathbf{R}}$ defined by $\om C^\beta:=\om f^{-1}(]-\infty, -\beta])$ and construct its limit set at infinity $(\Omega_{\inf=-\infty}, \se L)$. It is invariant since
\begin{align}\nonumber
\alpha(\omega,\omega')\om C^\beta&=\{\alpha(\omega, \omega')x\in\omp X\mid\om f(x)\le-\beta\}=\{y\in\omp X\mid\om f(\alpha(\omega',\omega)y)\le -\beta\}\\\nonumber
&=\{y\in\omp X\mid\omp f(y)+c(\omega,\omega')\le -\beta\}=\omp C^{\beta+c(\omega, \omega')}
\end{align}
Consequently the section of circumcenters is also $\R\mid_{\Omega_{\inf=-\infty}}$-invariant. We conclude the proof by gluing the two sections together.
\end{proof}

\noindent The following proposition is an adaptation of \cite[Lem. 2.5, p. 192]{AB98} and is a key step in the proof of the main theorem.

\begin{prop}\label{PropSecb}
Let $(\Omega,\A)$ be a Borel space, $\ch{X}$ be a Borel field of proper CAT(0) spaces, let $\se x^0\in\callo{X}$ and $\ch{B}\le(\Omega, \partial\se X)$ be a Borel subfield of closed sets.
\begin{enumerate}
\item[(i)] Assume that $\se\pi\in\mathcal{L}(\Omega, \text{Prob}(\se B))$ is fixed. For every $\omega\in\Omega$ and $x^0_\omega\in X_\omega$ we define
$$\begin{array}{cccl} \om b:&\om X&\rightarrow&\RR\\
&\om x&\mapsto&\int_{\om B}b_{\om x}(\xi)d\om\pi(\xi),
\end{array}$$
where if $x_\omega\in\om X$
$$\begin{array}{cccl} b_{\om x}:&\om B&\rightarrow&\RR\\
&\om\xi&\mapsto&b_{\om x^0,\om\xi}(\om x).
\end{array}$$
Then $\se b\in{\cal L}(\Omega,\C(\se X))$ and $\om b$ is convex for every $\omega\in\Omega$.
\item[(ii)] If $\alpha : \R\curvearrowleft\ch{X}$ acts by isometries such that $\ch{B}$ and $\se\pi$ are invariant, then the section $\se b$ is quasi-invariant.
\end{enumerate}
\end{prop}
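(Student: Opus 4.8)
\noindent I would establish the two assertions separately.

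\medskip

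\noindent For (i), the plan is to check the fiberwise statement first and then the measurability. Fix $\omega\in\Omega$: each Busemann function $b_{\om x^0,\xi}:\om X\to\RR$, $\xi\in\om B$, is $1$-Lipschitz, vanishes at $\om x^0$, and is convex (classical in CAT(0) geometry, \cite{BH99}), so $|b_{\om x^0,\xi}(\om x)|\le\om d(\om x^0,\om x)$ uniformly in $\xi$ and $\om b(\om x)=\int_{\om B}b_{\om x^0,\xi}(\om x)\,d\om\pi(\xi)$ is a well-defined real number. Integrating over $\om B$ against the probability measure $\om\pi$ the inequalities $|b_{\om x^0,\xi}(\om x)-b_{\om x^0,\xi}(\om y)|\le\om d(\om x,\om y)$ and $b_{\om x^0,\xi}(\gamom{x}{y}(t))\le(1-t)b_{\om x^0,\xi}(\om x)+tb_{\om x^0,\xi}(\om y)$ (valid for every $\xi\in\om B$) shows that $\om b$ is $1$-Lipschitz, hence continuous, and convex; thus $\se b\in\mathcal S(\Omega,\C(\se X))$ with each $\om b$ convex.

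\medskip

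\noindent For the measurability in (i), I would fix $\se x\in\callo X$ and show $\omega\mapsto\om b(\om x)$ is Borel, which with Theorem~\ref{ThmStrucBorelFctCont} gives $\se b\in\mathcal L(\Omega,\C(\se X))$. Since $\om B$ is closed in the compact space $\bd\om X$ it is compact, so $\se B$ is a Borel field of compact — a fortiori proper — metric spaces and both Theorem~\ref{ThmStrucBorelFctCont} and Example~\ref{Exmoy} apply to it. The map $\xi\mapsto b_{\om x^0,\xi}(\om x)=b_{\om x}(\xi)$ is continuous on $\om B$ for the conic topology, because $\xi\mapsto b_{\om x^0,\xi}$ is the restriction to $\bd\om X$ of the embedding $\om i:\om X\to\C_0(\om X)$ that induces the conic topology and point evaluation is continuous on $\C_0(\om X)$. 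So $\omega\mapsto b_{\om x}$ is a section of $\C(\se B)$, and it lies in $\mathcal L(\Omega,\C(\se B))$ by Theorem~\ref{ThmStrucBorelFctCont} as soon as $\omega\mapsto b_{\om x}(\om\eta)=b_{\om x^0,\om\eta}(\om x)$ is Borel for every $\se\eta\in\mathcal L(\Omega,\se B)\subseteq\mathcal L(\Omega,\bd\se X)$ — which is exactly Remark~\ref{RemStructureBord}. Finally, $\se\pi\in\mathcal L(\Omega,\text{Prob}(\se B))$ means, by Example~\ref{Exmoy}, that $\omega\mapsto\int_{\om B}\om f\,d\om\pi$ is Borel for every Borel section $\se f$ of $\C(\se B)$; applied to $\se f=(\omega\mapsto b_{\om x})$ this gives that $\omega\mapsto\om b(\om x)$ is Borel.

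\medskip

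\noindent For (ii), the plan is to integrate the transformation law of Busemann functions under an isometry proved inside Lemma~\ref{LemActionBd}: for an isometry $\gamma:X_1\to X_2$ with base points $x_1,x_2$ and $\xi\in\bd X_1$ one has $b_{x_2,\widetilde\gamma(\xi)}(y)=b_{x_1,\xi}(\gamma^{-1}y)-b_{x_1,\xi}(\gamma^{-1}x_2)$. Applying this fiberwise with $\gamma=\alpha(\omega,\omega')$, $\gamma^{-1}=\alpha(\omega',\omega)$, $x_1=\om x^0$, $x_2=\omp x^0$, I get, for every $(\omega,\omega')\in\R$ and $y\in\omp X$,
$$\om b(\alpha(\omega',\omega)y)=\int_{\om B}b_{\omp x^0,\widetilde\alpha(\omega,\omega')\xi}(y)\,d\om\pi(\xi)+c(\omega,\omega'),$$
where $c(\omega,\omega'):=\int_{\om B}b_{\om x^0,\xi}(\alpha(\omega',\omega)\omp x^0)\,d\om\pi(\xi)$. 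Invariance of $\se B$ gives $\widetilde\alpha(\omega,\omega')(\om B)=\omp B$, and invariance of $\se\pi$ gives $\widetilde\alpha(\omega,\omega')_\ast\om\pi=\omp\pi$ (this being what invariance of a Borel section of $\text{Prob}(\se B)$ means, cf. Example~\ref{Exmoy}); hence the substitution $\eta=\widetilde\alpha(\omega,\omega')\xi$ rewrites the remaining integral as $\int_{\omp B}b_{\omp x^0,\eta}(y)\,d\omp\pi(\eta)=\omp b(y)$. Therefore $\om b\circ\alpha(\omega',\omega)=\omp b+c(\omega,\omega')$ for all $(\omega,\omega')\in\R$, i.e. $\se b$ is quasi-invariant, with associated cocycle $c$.

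\medskip

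\noindent I expect the only non-routine part to be the bookkeeping in (i): chaining Remark~\ref{RemStructureBord}, Theorem~\ref{ThmStrucBorelFctCont} applied to the compact field $\se B$, and Example~\ref{Exmoy}, together with the identification of the conic topology as the one for which $\xi\mapsto b_{\om x^0,\xi}(\om x)$ is continuous. In (ii) the one delicate point is the direction of the pushforward hidden in the invariance of $\se\pi$; the rest is a direct computation.
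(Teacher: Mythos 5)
Your argument is correct and follows essentially the same route as the paper: part (i) combines convexity and the $1$-Lipschitz property of Busemann functions with the observation that $\se b(\se x)=\varphi_{\se\pi}(b_{\se x})$ is Borel because $b_{\se x}\in\mathcal L(\Omega,\C(\se B))$ (via Remark~\ref{RemStructureBord} and Example~\ref{Exmoy}), and part (ii) is the same change-of-variables computation, merely packaged through the transformation law $\widetilde\gamma_0(b_{x_1,\xi})=b_{x_2,\widetilde\gamma(\xi)}$ from Lemma~\ref{LemActionBd} instead of unrolling the cocycle identity (\ref{EquationCocycleBusemann}) directly. Your cocycle $c(\omega,\omega')=\om b(\alpha(\omega',\omega)\omp x^0)$ agrees with the paper's $-\omp b(\alpha(\omega,\omega')\om x^0)$ since $\om b(\om x^0)=0$.
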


\begin{proof}
(i) A Busemann function is convex, 1-Lipschitz and likewise is an integral of such functions. In particular $\om b$ is convex and continuous for every $\omega\in\Omega$. Moreover, by the Riesz's representation Theorem used to define the Borel structure on the field of probabilities $(\Omega, \text{Prob}(\se B))$ (\textit{cf.} Example \ref{Exmoy}) the evaluation
$$\se b(\se x)=\int_{\se B}b_{\se x}(\xi)d\se\pi(\xi)=\varphi_{\se\pi}(b_{\se x})$$
is Borel since $b_{\se x}\in{\cal L}(\Omega,\C(\se B))$ for every $\se x\in\callo X$ - see the Remark \ref{RemStructureBord}.
\smallskip

\noindent (ii) This assertion is proved by the following calculation.
\begin{eqnarray*}
(\widetilde{\alpha}(\omega,\omega')\om b)(\omp x)&=&\om b(\alompom\omp x)=\int_{\om
B}b_{\om x^0,\xi}(\alompom\omp
x)d\om\pi(\xi)\\
&\stackrel{(\ref{EquationCocycleBusemann})}{=}&\int_{\om B}\big(b_{\alompom\omp
x^0,\xi}(\alompom \omp x)-b_{\alompom\omp
x^0,\xi}(\om x^0)\big)d\om\pi(\xi)\\
&\stackrel{\alpha\text{ isom.}}{=}&\int_{\om B}b_{\omp
x^0,\alomomp\xi}(\omp x)d\om\pi(\xi)-\int_{\om B}b_{\omp
x^0,\alomomp\xi}(\alomomp\om
x^0)d\om\pi(\xi)\\
&=&\int_{\om B}b_{\omp x}(\alomomp\xi)d\om\pi(\xi)-\int_{\om
B}b_{\alomomp\om x^0}(\alomomp\xi)d\om\pi(\xi)\\
&\stackrel{\se\pi\text{ inv.}}=&\int_{\omp B}b_{\omp
x}(\xi')d\omp\pi(\xi')-\int_{\omp B}b_{\alomomp\om
x^0}(\xi')d\omp\pi(\xi')\\
&=&\omp b(\omp x)-\omp b(\alomomp\om x^0)
\end{eqnarray*}
So the function $\om b$ is quasi-invariant with $c(\omega,\omega')=b_{\omega'}(\alpha(\omega,\omega')x^0_\omega)$.
\end{proof}

\subsection{Final Proof}

\begin{proof}[Proof of Theorem \ref{ThmAH}]
Assume that the assertion $(i)$ is not satisfied. The Theorem \ref{Thmminempty} implies the existence of an almost invariant Borel subfield $\se C$ of closed convex non-empty subsets which is minimal for these properties. Without lost of generality we can assume that this field is invariant - see the Remark \ref{RemInvariance}. Let $(\Omega,\se F)$ be the Borel subfield of flat points of $(\Omega,\bd\se C)$ and $\se C\hookrightarrow\se E\times\se Y$ the Adams--Ballmann decomposition (see Lemma \ref{LemFlatPointsBorelSubfield} and Proposition \ref{PropDecompositionABborelien}). Let $\se P$ be the Borel subfield of $\bd\se C$ introduced before Lemma \ref{LemseP&seA}. Define
$$\Omega':=\{\omega\in\Omega\mid\om P\nonvide\}$$
which is a Borel and invariant subset of $\Omega$. Since $\R$ is ergodic this set is either of full or of null measure. In the first case there exists an invariant section
$$\se\xi\in{\mathcal L}(\Omega,\se P)\subseteq{\mathcal L}(\Omega,\bd\se C)\subseteq{\mathcal L}(\Omega,\bd X).$$
Indeed it is proven in \cite{AB98} that $\rad(\om P)<\pi/2$ whenever $\om P\nonvide$ and therefore the section of the circumcenters is Borel (\textit{cf}. Theorem \ref{ThmCircum}) and invariant. This contradicts the assumption made at the beginning of the proof.

\noindent We can therefore assume that $\Omega\setminus\Omega'$ is of full measure and we won't lose generality if we assume that $\Omega'=\emptyset$. Then $\om C=\om E\times\om Y$ where $\om E$ is a finite dimensional space and $\bd\om Y$ doesn't contain flat points for all $\omega\in\Omega$. The Borel field $\bd\se Y$ is invariant by the property (iii) of the Adams--Ballmann decomposition. We set
$$\Omega'':=\{\omega\in\Omega\mid \bd\om Y=\emptyset\}$$
which is an invariant and Borel subset of $\Omega$ and therefore of full or of null measure. Assume first that it is of full measure. Then $(\Omega'',\se Y)$ is a Borel field of bounded CAT(0) spaces and therefore the section of the circumcenters $c_{\se Y}\in{\cal L}\big(\Omega'',\se Y\big)$ is Borel and invariant (\textit{cf.} Lemma \ref{LemCentresBorel}). Thus $(\Omega'',\se E\times\{c_{\se Y}\})$ is a Borel subfield of flats of $\se C$ (and thus of $\se X$) which is invariant. Ergodicity obviously implies that the dimension is essentially constant.

\noindent So we can assume that $\Omega''=\emptyset$. Since the relation is amenable there exists an invariant Borel section of probabilities $\se{\pi}\in\mathcal L(\Omega,\text{Prob}(\bd\se Y))$. By the Proposition \ref{PropSecb} the section of convex functions
$$\se b\in{\cal S}(\Omega,\F(\se X))$$
defined, for $\se x^0\in\mathcal{L}(\Omega, \se C)$ fixed, by
$$\om b(x)=\int_{\bd\om Y}b_{\xi,\om x^0}(x)d\om\pi(\xi)$$
is such that $\se b\in{\cal L}(\Omega,\C(\se X))$ and that $\om b$ is convex for every $\omega\in\Omega$. Moreover it is quasi-invariant. If we define $\om f$ to be the restriction of $\om b$ to $\om C$, then $\se f\in{\cal L}(\Omega,\C(\se C))$ is again quasi-invariant. Thus by Lemma \ref{LemFctQI} the subsets
$$\begin{array}{lcl}
\Omega_{\inf}&:=&\{\omega\in\Omega\mid\inf\om f\text{ is not attained}\}\\
\Omega_{\min}&:=&\{\omega\in\Omega\mid\inf\om f\text{ is attained}\}
\end{array}$$
are invariant Borel subsets of $\Omega$. By ergodicity one of it has to be of full measure. Let's prove that $\mu(\Omega_{\min})=1$ is not possible. In that case, $\se B:=\big(\se b\mid_{\Omega_{\min}}-\min(\se b\mid_{\Omega_{\min}})\big)^{-1}(\{0\})$ would be an invariant Borel subfield of closed convex subsets and - by minimality of $\se C$ - $\om B=\om C$ would hold for almost every $\omega\in\Omega$. This would mean that $\om b$ is a constant function and that the points of $\bd\om Y$ in the support of $\om\pi$ are flats. This contradicts the construction of $\om Y$. Therefore $\Omega_{\inf}$ has to be of full measure. But we have shown in Lemma \ref{LemFctQI} that in this case we can construct an invariant section $\se\xi\in{\cal L}(\Omega_{\inf},\bd\se Y)$ and that contradicts the original assumption of the proof.
\end{proof}

\section{Concluding Remarks}

\noindent It is very likely that our result might hold for any amenable Borel groupoid. At least it has been checked for amenable $G$-spaces (see \cite{And10} or \cite{Duc11}).

\noindent Our result will be used in two forthcoming articles, one from each author.
\bigskip

\noindent M. A. , Section de Mathématiques de l'Université de Genève, \texttt{martin.anderegg@unige.ch }

\noindent P. H. , \'Ecole Polytechnique Fédérale de Lausanne, \texttt{philippe.henry@epfl.ch}

\small

\end{sloppypar}

\begin{thebibliography}{9999999}

\bibitem[AB98]{AB98}\textsc{S. Adams \& W. Ballmann}, \textit{Amenable isometry groups of Hadamard spaces}, Mathematische Annalen 312, 1998, pp. 183-195.

\bibitem[AL91]{AL91}\textsc{S. Adams \& R. Lyons}, \textit{Amenability, Kazhdan's Property and Percolation for Trees, Groups and Equivalence Relations}, Israel Journal of Mathematics 75, 1991, pp. 341-370.

\bibitem[Alv08]{Alv08}\textsc{A. Alvarez}, \textit{Une th\'eorie de Bass-Serre pour les relations d'\'equivalence et les groupo\"ides bor\'eliens}, Th\`ese, Universit\'e de Lyon - Ecole Normale Sup\'erieure de Lyon, 2008.

\bibitem[AR00]{AR00}\textsc{C. Anantharaman-Delaroche \& J. Renault}, \textit{Amenable Groupoids}, Monographie de L'Enseignement math\'ematique 36, Gen\`eve, 2000.

\bibitem[And10]{And10}\textsc{M. Anderegg}, \textit{Moyennabilit\'e et courbure : $G$-espaces bor\'eliens et espaces CAT(0)}, Th\`ese n$^\circ$ 4790, \'Ecole polytechnique f\'ed\'erale de Lausanne, 2010.

\bibitem[Av70]{Av70}\textsc{A. Avez}, \textit{Variétés riemanniennes sans points focaux}, Comptes Rendus de l'Académie des sciences 270, 1970, pp. 188–191.

\bibitem[Bal95]{Bal95}\textsc{W. Ballmann}, \textit{Lectures on Spaces of Nonpositive Curvature}, DMV Seminar 25, Birkh\"auser, B\^ale, 1995.

\bibitem[BH99]{BH99}\textsc{M. R. Bridson \& A. Heafliger}, \textit{Metrics Spaces of Non-Positive Curvature}, Springer, Berlin-Heidelberg, 1999.

\bibitem[BS87]{BS87} \textsc{M. Burger\&  V. Schroeder}, \textit{Amenable groups and stabilizers of measures on the boundary of a Hadamard manifold} Mathematische Annalen  276 (3), 1987, pp. 505–514.

\bibitem[Cas67]{Cas67}\textsc{C. Castaing}, \textit{Sur les multi-applications mesurables}, Revue fran\c caise d'informatique et de recherche op\'erationnelle 1, 1967, pp. 91-126.

\bibitem[CFW81]{CFW81}\textsc{A. Connes, J. Feldman \& B. Weiss}, \textit{An amenable equivalence relation is generated by a single transformation}, Ergodic Theory Dynamical Systems 1, 1981, pp. 431-450.

\bibitem[CL10]{CL10}\textsc{P.-E. Caprace \& A. Lytchak} \textit{At infinity of finite-dimensional CAT(0) spaces},
Mathematische Annalen 346 (1), 2010, pp. 1-21.

\bibitem[CV77]{CV77}\textsc{C. Castaing \& M. Valadier}, \textit{Convex Analysis and Measurable Multifunctions}, Lecture Notes in Mathematics 580, Springer, Berlin-Heidelberg, 1977.

\bibitem[DAP76]{DAP76}\textsc{C. Delode, O. Arino \& J.-P. Penot}, \textit{Champs mesurables et multisections}, Annales de l'Institut Henri Poincar\'e (Section B) 12 (1), 1976, pp. 11-42.

\bibitem[Dix69]{Dix69}\textsc{J. Dixmier}, \textit{Les alg\`ebres d'op\'erateurs dans l'espace hilbertien (Alg\`ebres de Von Neumann)}, Gauthier-Villars, Paris, (2\`eme \'edition)
1969.

\bibitem[DJK94]{DJK94}\textsc{R. Dougherty, S. Jackson \& A. S. Kechris}, \textit{The Structure of Hyperfinite Borel Equivalence Relations}, Transactions of the American Mathematical Society 341(1), 1994, pp. 193-225.

\bibitem[Doo94]{Doo94}\textsc{J. L. Doob}, \textit{Measure Theory}, Springer, New York, 1994.

\bibitem[Duc11]{Duc11}\textsc{B. Duchesne}, \textit{Infinite dimensional non-positively curved symmetric spaces of finite rank}, preprint, arXiv:1109.0441, 2011.

\bibitem[Eng89]{Eng89}\textsc{R. Engelking}, \textit{General Topology}, Heldermann, Berlin, 1989.

\bibitem[FD88]{FD88}\textsc{J. M. G. Fell, R. S. Doran}, \textit{Representations of $\ast$-Algebras, Locally Compact Groups and Banach $\ast$-Algebraic Bundles}, Academic Press, Londres, 1988 (deux volumes).

\bibitem[FM77a]{FM77a}\textsc{J. Feldman \& C. Moore}, \textit{Ergodic Equivalence Relations, Cohomology and Von Neumann Algebras} part. I, Transactions of the Mathematical Society 234 (2), 1977, pp. 289-324.

\bibitem[FM77b]{FM77b}\textsc{J. Feldman \& C. Moore}, \textit{Ergodic Equivalence Relations, Cohomology and Von Neumann Algebras} part. II, Transactions of the Mathematical Society 234 (2), 1977, pp. 325-359.

\bibitem[FNS06]{FNS06}\textsc{K. Fujiwara, K. Nagano \& T. Shioya}, \textit{Fixed point sets of parabolic isometries of CAT(0)-spaces}, Commentarii Mathematici Helvetici 81 (2), 2006, pp. 305-335.

\bibitem[Gab00]{Gab00}\textsc{D. Gaboriau}, \textit{Coût des relations d'équivalence et des groupes}, Inventiones Mathematicae 139 (1), 2000, pp. 41-98.

\bibitem[Gil87]{Gil87}\textsc{J. R. Giles}, \textit{Introduction to the Analysis of Metric
Spaces}, Australian Mathematical Society, Lecture Series 3, Cambridge University Press, 1987.

\bibitem[Gro81]{Gro81} \textsc{M. Gromov}, \textit{Structures métriques pour les variétés riemanniennes}, in Metric structures for Riemann manifolds, Edited by J. Lafontaine and P. Pansu. Textes Mathématiques, CEDIC, Paris, 1981.

\bibitem[Hen10]{Hen10}\textsc{Ph. Henry}, \textit{Actions de relations d'\'equivalence sur les champs d'espaces m\'etriques CAT(0)},  Th\`ese n$^\circ$ 4825, \'Ecole polytechnique f\'ed\'erale de Lausanne, 2010.

\bibitem[Him75]{Him75}\textsc{C. J. Himmelberg}, \textit{Measurable relations}, Fundamenta Mathematicae 87, 1975, pp. 53-72.

\bibitem[JKL02]{JKL02}\textsc{S. Jackson, A. S. Kechris \& A. Louveau}, \textit{Countable Borel Equivalence Relations}, Journal of Mathematical Logic 2 (1), 2002, pp. 1-80.

\bibitem[Kan08]{Kan08}\textsc{V. G. Kanovei}, \textit{Borel Equivalence Relations : Structure and Classification}, University Lecture Series, American Mathematical Society, 2008.

\bibitem[Kec95]{Kec95}\textsc{A. S. Kechris}, \textit{Classical Descriptive Set Theory}, Springer, New York, 1995.

\bibitem[KM04]{KM04}\textsc{A. S. Kechris \& B. D. Miller}, \textit{Topics in Orbit Equivalence}, Lecture Notes in Mathematics 1852, Springer, Berlin-Heidelberg, 2004.

\bibitem[LS97]{LS97}\textsc{U. Lang \& V. Schroeder}, \textit{Kirszbraun's theorem and metric spaces of bounded curvature}, Geometric and Functional Analysis 7 (3), 1997, pp. 535-560.

\bibitem[Mon06]{Mon06}\textsc{N. Monod}, \textit{Superrigidity for Irreductible Lattices and Geometric Splitting}, Journal of the American Mathematical Society 19 (4), 2006, pp. 781-814.

\bibitem[Ury27]{Ury27}\textsc{P. Urysohn}, \textit{Sur un espace m\'etrique universel}, Bulletin des sciences math\'ematiques 51, 1927, pp. 43-64 \& pp. 74-90.

\bibitem[Val78]{Val78}\textsc{M. Valadier}, \textit{Sur le plongement d'un champ mesurable d'espaces m\'etriques dans un champ trivial}, Annales de l'Institut Henri Poincar\'e (Section B) 14 (2), 1978, pp. 165-168.

\bibitem[Zim78]{Zim78}\textsc{R. J. Zimmer}, \textit{Amenable Ergodic Group Actions and an Application to Poisson Boundaries of Random Walks}, Journal of Functional
Analysis 27, 1978, pp. 350-372.

\bibitem[Zim83]{Zim83}\textsc{R.J. Zimmer}, \textit{Curvature of leaves in amenable foliations}, American Journal of Mathematics 105 (4), 1983, pp. 1011-1022.

\end{thebibliography}
\end{document}